\newcommand{\RR}{\mathbb{R}}
\newcommand{\RT}{\mathbf{RT}}
\newcommand{\permi}{\eta}
\DeclareMathOperator{\dive}{div}
\newcommand{\jump}[1]{[ #1 ]}
\newcommand{\bfV}{\mathbf{V}}
\newcommand{\bfK}{\mathbf{K}}
\newcommand{\bfH}{\mathbf{H}}
\newcommand{\bfE}{\mathbf{E}}
\newcommand{\bfx}{\mathbf{x}}
\newcommand{\bfu}{\pmb{u}}
\newcommand{\bfv}{\pmb{v}}
\newcommand{\bfn}{\pmb{n}}
\newcommand{\bft}{\pmb{t}}
\newcommand{\bfrho}{\pmb{\rho}}
\newcommand{\bfw}{\pmb{w}}
\newcommand{\boundu}{\partial\Omega_{\pmb{u}}}
\newcommand{\Es}{E^{\Sigma}}
\newcommand{\mesh}{\mathcal{T}_h}
\newcommand{\mcT}{\mathcal{T}}
\newcommand{\mcM}{\mathcal{M}}
\newcommand{\mcF}{\mathcal{F}}
\newcommand{\uB}{u_B}
\newcommand{\kp}{\hat{k}}
\newcommand{\mcTT}{\mcT_{h}(T_M)} 
\newcommand{\mcTL}{\mcT_{H}} 
\newcommand{\mcFM}{\mcF(\mcM_h)} 
\newcommand{\mcFs}{\mcF_{\Sigma}} 
\newcommand{\mcFsi}{\mcF^{\circ}_{\Sigma}} 
\newcommand{\tn}{|\mspace{-1mu}|\mspace{-1mu}|}
\newcommand{\Vk}{\bfV_{h,k}}
\newcommand{\Pk}{P_{h,k}}
\newcommand{\Qk}{Q_{h,k}}
\newcommand{\Qks}{Q_{h,k+1}^{\Sigma}}
\newcommand{\VkZ}{\bfV_{h,0}}
\newcommand{\QkZ}{Q_{h,0}}
\newcommand{\QksZ}{Q_{h,1}^{\Sigma}}
\newcommand{\Af}{\mathcal{A}}
\newcommand{\Mf}{\mathcal{M}}
\newcommand{\Bf}{\mathcal{B}}
\newcommand{\Omeps}{\Omega_{\epsilon}} 
\newcommand{\Omsig}{\Omega_{\Sigma}}  
\newcommand{\Omdh}{\Omega_{\mcT}} 
\newcommand{\interpv}{\pi_{h}}
\newcommand{\projp}{\Pi_{h}}
\newcommand{\projps}{\Pi_{h}^{\Sigma}}
\newcommand{\Hdivr}{\bfH^{1,\dive}}
\theoremstyle{plain}
\newtheorem{thm}{Theorem}[section]
\newtheorem{remark}[thm]{Remark}
\newtheorem{lemma}[thm]{Lemma}
\numberwithin{equation}{section}					
\numberwithin{figure}{section}	 					
\numberwithin{table}{section}						
\title{Stabilized Lagrange Multipliers for Dirichlet Boundary Conditions in Divergence Preserving Unfitted Methods}
\author{Thomas Frachon \footnote{Department of Mathematics, KTH Royal Institute of Technology,  SE-100 44 Stockholm, Sweden. Email:~frachon@kth.se, erikni6@kth.se, sara.zahedi@math.kth.se.}, Erik Nilsson\footnotemark[1], Sara Zahedi\footnotemark[1]}
\date{}
\begin{document} 
\maketitle
\begin{abstract}
We extend the divergence preserving cut finite element method presented in [T. Frachon, P. Hansbo, E. Nilsson, S. Zahedi, SIAM J. Sci. Comput., 46 (2024)] for the Darcy interface problem to unfitted outer boundaries. We impose essential boundary conditions on unfitted meshes with a stabilized Lagrange multiplier method. The stabilization term for the Lagrange multiplier is important for stability but it may perturb the approximate solution at the boundary. We study different stabilization terms from cut finite element discretizations of surface partial differential equations and trace finite element methods. To reduce the perturbation we use a Lagrange multiplier space of higher polynomial degree compared to previous work on unfitted discretizations.  We propose a symmetric method that results in 1) optimal rates of convergence for the approximate velocity and pressure;  2) well-posed linear systems where the condition number of the system matrix scales as for fitted finite element discretizations; 3) optimal approximation of the divergence with pointwise divergence-free approximations of solenoidal velocity fields. The three properties are proven to hold for the lowest order discretization and numerical experiments indicate that these properties continue to hold also when higher order elements are used.
\end{abstract}

\noindent {\bf Keywords:} Cut finite element methods, Lagrange multipliers, Dirichlet boundary conditions, mixed finite element methods, Darcy problem, Poisson problem, divergence condition

\maketitle  

\section{Introduction and the mathematical model}\label{sec: intro}
Satisfying the incompressibility condition pointwise has been the focus of the recent developments of unfitted discretizations of partial differential equations modeling the dynamics of incompressible flows~\cite{LiuNeiOls23, FraHaNilZa22, BurHanLarst24, FraNilZa23, Lehrenfeld2023Divfree}.

It is well-known that based on how the unfitted boundary cuts through the computational mesh unfitted finite element methods may produce ill-conditioned linear system matrices. In cut finite element methods (CutFEM) a common strategy to remedy this ill-conditioning has been to add ghost penalty stabilization terms to the weak formulation~\cite{Bu10, BuHa12}. These stabilization terms are sometimes also needed to ensure that the discretization is stable independent of how the unfitted boundary is positioned relative the mesh. In~\cite{LiuNeiOls23} a CutFEM for the Stokes equations based on divergence-free elements is proposed but due to the standard ghost penalty stabilization the velocity is pointwise divergence-free only outside a band around the unfitted boundary. In~\cite{FraHaNilZa22}, the standard ghost penalty stabilization terms~\cite{HaLaZa14, MaLaLoRo14} are slightly modified to preserve the divergence-free property of the underlying finite elements. The discretization in~\cite{FraHaNilZa22} of the Darcy interface problem is unfitted with respect to an interface (an internal boundary) but fitted with respect to the domain boundary and essential boundary conditions are imposed strongly. 

In~\cite{FraNilZa23} weak imposition of Dirichlet boundary conditions on unfitted boundaries, in connection with the Stokes equations, is studied and it is illustrated with numerical experiments that the pointwise divergence-free property as well as pressure robustness~\cite{linke_pressure-robustness_2016} can be affected if the boundary condition is not satisfied accurately. In~\cite{BurHanLarst24} a first order cut finite element discretization of the Stokes equations based on a stabilized Lagrange multiplier method for enforcing Dirichlet boundary conditions is proposed. The method preserves the divergence-free condition pointwise. However, a numerical experiment in the paper illustrates that errors at the boundary destroy pressure robustness. We illustrate  with a similar numerical experiment that the imposition of the essential boundary condition can be improved by using a Lagrange multiplier space of higher polynomial degree than used in~\cite{BurHanLarst24}. We analyze this method for the Darcy problem, extending the divergence preserving method in~\cite{FraHaNilZa22} to handle  Dirichlet boundary conditions on unfitted boundaries.

We consider an open bounded convex domain $\Omega \subset \RR^d$, $d=2,3$, and assume $\Omega$ has a piecewise smooth closed simply connected boundary $\partial \Omega$. Given source terms $\pmb{f}\in \mathbf{L}^2(\Omega),\ g\in L^2(\Omega)$, boundary data $\uB \in H^{-1/2}(\partial\Omega)$, the inverse permeability $\permi$, we seek fluid velocity $\pmb{u}: \Omega \rightarrow \RR^d$ and pressure $p: \Omega \rightarrow \RR$ with prescribed average $\bar{p}\in \RR$, satisfying the following Darcy problem also called the mixed formulation of the Poisson problem: 
\begin{subequations}\label{eqs:strongDarcy}
\begin{alignat}{2}
  \permi \pmb{u} + \nabla p & = \pmb{f} 
 && \text{ in } \Omega, \label{eq:pressure} \\
  \hfill \dive\pmb{u} & =  g
  &&\text{ in } \Omega, \label{eq:conservation} \\
  \hfill \pmb{u}\cdot\pmb{n} & =  \uB
  &&\text{ on } \partial\Omega, \label{eq:BCu} \\
  \int_\Omega p &= \bar{p},
\end{alignat}
\end{subequations}
where $\pmb{n}$ is the outward unit normal vector to $\partial\Omega$. 
We assume that $\permi=\bfK^{-1}$ with the permeability tensor $\bfK: \Omega \rightarrow \RR^d$ being such that $\bfK$ is diagonal with the smallest diagonal entry being positive and bounded away from zero.  The boundary condition \eqref{eq:BCu} is an essential (Dirichlet) condition on the normal flux.

We use the Raviart-Thomas spaces as approximation spaces for the velocity and utilize the stabilization terms proposed in~\cite{FraHaNilZa22}. We show that the stabilized Lagrange multiplier method we propose for imposing the essential boundary condition on unfitted meshes does not destroy the divergence-free property but the stabilization may purturb the solution at the boundary. Compared to previous work we use Lagrange multiplier spaces of higher polynomial degree, the polynomial degree matches the velocity space. We illustrate numerically that this choice improves the imposition of the boundary condition which can be of importance in some applications. For the Lagrange multiplier we also study and compare different stabilization terms such as those in the literature of cut finite element discretizations of surface partial differential equations and trace finite element methods~\cite{LaZa2020SurfStab, BurHanLarMas2018MFD, GrandeLehr2018Surf}. We present a symmetric unfitted discretization for the Darcy problem which is of optimal convergence order, results in well-conditioned linear systems, and preserves the incompressibility condition pointwise.  We prove that these properties hold for the lowest order elements in two and three space dimensions. In the numerical experiments we also consider higher order elements in two space dimensions.

The paper is organized as follows.
In Section \ref{sec:Nummeth} we present a cut finite element method for the Darcy problem. In Section \ref{sec:analysis} we analyze the method. We derive a priori error estimates in Section \ref{sec:apriori}, and we prove and estimate how the condition number scales with mesh size in Section \ref{sec:condnbr}. 
In Section \ref{sec:numex} we present numerical experiments which test and confirm the theoretical results. 
In Section \ref{sec:conclusion} we summarize our findings. 
Appendix A  contains auxiliary lemmas used for proving the inf-sup condition. Appendix B is devoted to techniques that improve the implementation and efficiency of the stabilization terms.

\section{A cut finite element method}\label{sec:Nummeth}
In this section we introduce the mesh, the finite element spaces, and the weak form, which together define our unfitted discretization of the Darcy problem~\eqref{eqs:strongDarcy}. For the ease of presentation we assume up to Section \ref{sec:numex} that $\bar{p}=0$. 

\subsection{Mesh}
Let $\Omega_0$ be a polytopal open subset of $\RR^d$, $d=2,3$, such that  $\Omega\subset\Omega_0$. We assume that $\partial\Omega \cap \partial\Omega_0 = \emptyset$. Let $\{ \mcT_{0,h} \}_h$ be a quasi-uniform family of simplicial meshes of $\Omega_0$ with the subscript $h$ being the piecewise constant function that on an element $T$ is equal to $h_T$, the diameter of $T$. We assume that $h_0=\max_{T\in\mcT_{0,h}} h_T<<1$. We have 
\begin{align}
	\Omega_0 = \bigcup_{T\in\mcT_{0,h}}T.
\end{align}	

We call $\mcT_{0,h}$ the background mesh, and from it we construct the active mesh
\begin{align}
	\mesh := \left\{ T\in\mcT_{0,h}: |T\cap\Omega|>0 \right\}
\end{align}
on which we define our finite element spaces. The computational domain will be given by
\begin{align}
	\Omdh := \bigcup_{T\in\mesh}T,\quad \text{ and } \quad \Omega\subset\Omdh\subset\Omega_0.
\end{align}	

We denote the boundary by $\Sigma$: 
\begin{align}\label{eq:sigma}
	\Sigma:=\partial\Omega.
\end{align}
Let $\mcT_\Sigma$ be the set of elements intersected by $\Sigma$ and let $\Omsig$ be the corresponding domain:
\begin{align}
	\mcT_\Sigma &:= \left\{T \in \mcT_h : | \Sigma \cap T| > 0 \right\}, \\ 
	\Omsig &:= \bigcup_{T\in\mcT_{\Sigma}}T.
\end{align}
We use $\Omsig\subset \Omdh$ to talk about the subset of the computational domain corresponding to $\mcT_\Sigma$.

Given some submesh $\mcT'_h\subseteq\mesh$ we denote by $\mcF(\mcT'_h)$ the set of faces in $\mcT'_h$. By $\mcF^{\circ}(\mcT'_h)$ we denote the set of interior faces in $\mcF(\mcT'_h)$; that is faces $F=T_1\cap T_2$ for some $T_1,T_2\in\mcT'_h$. 
We will need the following two sets of faces:
\begin{align}
	\mcFs &:= \left\{ F\in\mcF(\mcT_\Sigma): |F\cap\Omega|>0 \right\}, \\
	\mcFsi &:= \mcF^{\circ}(\mcT_\Sigma).
\end{align}
See Figure \ref{fig:static active mesh} for an illustration of the sets and domains introduced in this section. 
\begin{figure}[h!]
	\centering
	\begin{subfigure}[b]{0.45 \textwidth}  	 	 	
	\centering	
	\includegraphics[scale=0.2]{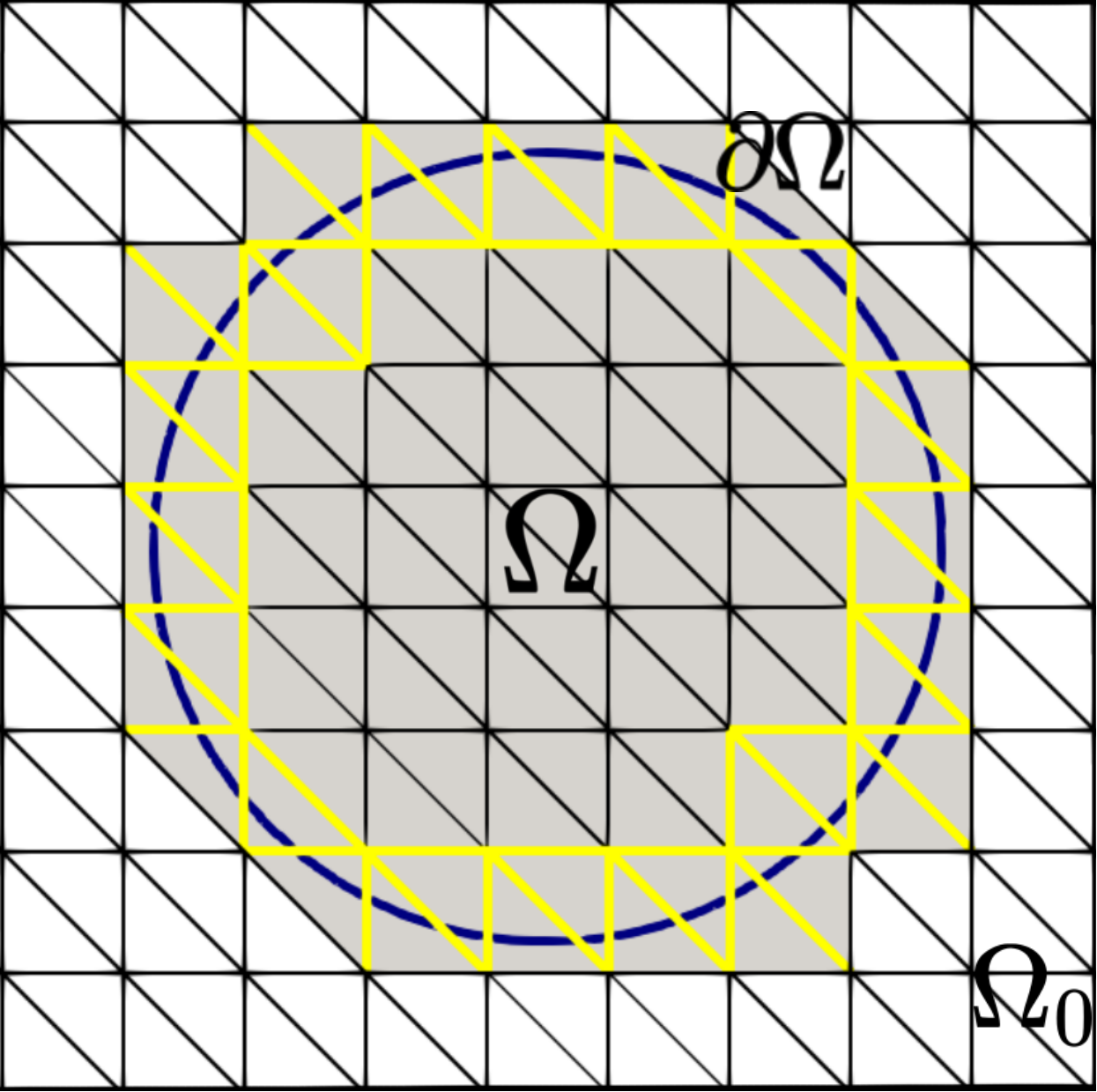}
	\end{subfigure}
        \caption{Illustration of the domains $\Omega\subset\Omdh\subset\Omega_0$, the active mesh $\mesh$ (triangles in grey), and faces in $\mcF_{\Sigma}$ (faces marked in yellow). The gray colored domain is $\Omdh$.}  
        \label{fig:static active mesh}      	
\end{figure}



\subsection{Spaces}
Let $U\subset\RR^d,\ d=2,3$. For $p,q:U\to \RR$ scalar functions
we define the $L^2$-inner product and $L^2$-norm over $U$ by 
\begin{align}
(p,q)_{U} := \int_{U} pq,\quad 
	\|p\|_{U} := (p,p)_U^{1/2}.
\end{align}
Similar notation hold for vector-valued functions. Let $B\subset\partial U$ be nonempty. In addition to $L^2(U)$ we will consider the Sobolev spaces
\begin{alignat}{2}
	\bfH^{\dive}(U) &:= \left\{\bfv \in  \mathbf{L}^2(U) : \ \dive \bfv \in L^2(U)\right\}, \quad &&\bfH^{\dive}_{w}(U):= \left\{ \bfv\in \bfH^{\dive}(U) : \bfv\cdot\bfn\lvert_\Sigma= w \in H^{-1/2}(\Sigma) \right\}, \\
  H^1(U)&:= \left\{ p\in L^2(U) : \nabla p\in \mathbf{L}^2(U)\right\},\quad &&H^1_{0,B}(U):= \left\{ p\in H^1(U) : p\lvert_B= 0 \right\}, \\
  \bfH^1(U)&:= \left\{ \bfv\in \mathbf{L}^2(U) : \nabla \bfv\in \mathbf{L}^2(U)\right\},\quad &&\bfH^1_{0,B}(U):= \left\{ \bfv\in\bfH^1(U) : \bfv\lvert_B= 0 \right\}.
\end{alignat}
The respective norms are
\begin{align}
	\| \bfv \|_{\bfH^{\dive}(U)} &:= (\| \bfv \|_{U}^2+\| \dive \bfv \|_{U}^2)^{1/2}, \\
	\| p \|_{H^1(U)} &:= (\| p \|_{U}^2+\| \nabla p \|_{U}^2)^{1/2}, \\
	\| \bfv \|_{\bfH^1(U)} &:= (\| \bfv \|_{U}^2+\| \nabla \bfv \|_{U}^2)^{1/2}.
\end{align}
In the a priori estimates we will use the space 
  \begin{equation}
  \Hdivr(U):= \left\{\pmb{v}:\ \pmb{v} \in  \mathbf{H}^1(U),  \ \dive \pmb{v} \in H^1(U)\right\}, 
    \end{equation}
   with the norm
  \begin{equation}
  \| \bfv \|_{\Hdivr(U)}^2 = \| \bfv \|_{\bfH^1(U)}^2+\| \dive \bfv \|_{H^1(U)}^2. 
   \end{equation}

Given $T\in\mesh$ let $\RT_{k}(T)$ denote the local Raviart-Thomas space \cite{BofBreFor13} of order $k\geq 0$. 
Define the following finite element spaces on the mesh $\mcT_{h}$:
\begin{align} \label{eq:Vi}
	\Vk & :=\left\{\bfv_h\in \bfH^{\dive}(\Omdh): \ \bfv_h\lvert_T\ \in \RT_k(T), \ \forall T\in \mcT_{h} \right\}, \\
  	\Qk& := \left(\bigoplus_{T \in \mcT_{h}} Q_k(T) \right) / \RR \\ 
	&= \left\{ q_h \in L^2(\Omdh) : \ q_h\lvert_T\ \in Q_{k}(T) \ \ \forall T\in \mcT_{h}, \ \int_{\Omega}q_h = 0 \right\},
\end{align}
where $Q_{k}(T)$ is the space of polynomials in $T$ of degree less or equal to $k$. 
The following commuting diagram holds (see \cite[Equation (2.5.27)]{BofBreFor13}): 
\begin{equation}\label{eq:commuting_diagram}
	\begin{tikzcd}
		\bfH^{\dive}(\Omdh)\cap \prod_{T\in\mesh} \bfH^{1}(T) \arrow{r}{\dive} \arrow[swap]{d}{\interpv} & L^2(\Omdh) \arrow{d}{\projp} \\%
		\Vk \arrow{r}{\dive} & \Qk
	\end{tikzcd}
\end{equation}
Here, $\projp:L^2(\Omdh)\to \Qk$ is the $L^2$-projection and $\interpv : \bfH^{\dive}(\Omdh)\cap \prod_{T\in\mesh} \bfH^{1}(T) 
\to \Vk$ is the global interpolation operator associated to $\Vk$.

To impose essential boundary conditions we will use the following space of discrete Lagrange multipliers: 
\begin{align}
	\Qks& :=\bigoplus_{T \in \mcT_\Sigma} Q_{k+1}(T)
\end{align}
consisting of piecewise polynomials of degree $k+1$ on the elements intersected by the boundary $\Sigma$. See Remark \ref{rem:lagrange_why_kp1} for details on why we choose the degree to be $k+1$ instead of $k$.

\subsection{The weak formulation}\label{sec:method}
A weak formulation to \eqref{eqs:strongDarcy} which enforces the essential boundary condition on $\Sigma$ through an additional equation using a Lagrange multiplier reads as follows: Find $(\bfu,p,\phi)\in \bfH^{\dive}_{u_B}(\Omega)\times H^1(\Omega)/\RR \times H^{1/2}(\Sigma)$ such that 
\begin{align}\label{eq:weakdarcy}
  m(\bfu,\bfv) + b(\bfv,p) + c(\bfv,\phi) &= (\pmb{f},\bfv)_\Omega
  &\text{ for all } \bfv\in \bfH^{\dive}_{0}(\Omega), \\
  b(\bfu,q) &= -(g,q)_{\Omega} &\text{ for all } q\in H^1(\Omega)/\RR, \nonumber \\
  c(\bfu,\chi) &= (u_B,\chi)_{\Sigma} &\text{ for all } \chi\in H^{1/2}(\Sigma), \nonumber
\end{align}
where, 
\begin{align}
  m(\bfu,\bfv) &:= (\permi \bfu,\bfv)_{\Omega},\label{eq:a}\\
  b(\bfu,q) &:= -(\dive \bfu,q)_{\Omega}, \label{eq:b} \\
  c(\bfu,\chi) &:= (\bfu\cdot\bfn,\chi)_{\Sigma}, \label{eq:c} 
\end{align}
The integrals on the boundary $\Sigma$ are well defined since 
$\uB \in H^{-1/2}(\boundu).$ We shall assume that the data is regular enough so that the solution $(\bfu,p)$ lies inside $\bfH^{1,\dive}(\Omega)\times H^1(\Omega)$. An example is given by $u_B=0$, $\pmb{f}\in \bfH^2(\Omega)$ and $g\in H^1(\Omega)$, c.f. \cite[Thm. 6.3.5]{evans2022pde}. 

We propose the following cut finite element discretization:  Find $(\bfu_h,p_h,\phi_h)\in \Vk \times \Qk \times \Qks$ such that 
\begin{align}\label{eq:discretedarcy2}
	\Mf (\bfu_h,\bfv_h) + \Bf(\bfv_h,p_h) + c(\bfv_h,\phi_h) &= (\pmb{f},\bfv)_\Omega
	&\text{ for all } \bfv_h\in \Vk, \\
	\Bf(\bfu_h,q_h) &= -(g,q_h)_{\Omega} &\text{ for all } q_h\in \Qk, \nonumber \\
	c(\bfu_h,\chi_h)-s_c(\phi_h,\chi_h) &= (u_B,\chi_h)_{\Sigma}  &\text{ for all } \chi_h\in \Qks. \nonumber
\end{align}
Here 
\begin{align}
	\Mf (\bfu_h,\bfv_h) &:= m(\bfu_h,\bfv_h) + s(\bfu_h,\bfv_h) =(\permi \bfu_h,\bfv_h)_{\Omega} + s(\bfu_h,\bfv_h), \label{eq:A}\\
	\Bf(\bfu_h,q_h) &:=b(\bfu_h,q_h)-s_b(\bfu_h,q_h) = -(\dive \bfu_h,q_h)_{\Omega} - s_b(\bfu_h,q_h), \label{eq:B} \\
	c(\bfu_h,\chi_h) &:= (\bfu_h\cdot\bfn,\chi_h)_{\Sigma}, \label{eq:C}
\end{align}
with the ghost penalty stabilization terms connecting elements with a small intersection with $\Omega$ to elements with a large intersection defined as
\begin{align}
	s(\bfu_h,\bfv_h) &:=  \sum_{F \in \mcFs}\sum_{j=0}^{k+1}
	\tau h^{2j+1}  (\jump{D^j_{\bfn_F} \bfu_{h} }, \jump{D^j_{\bfn_F} \bfv_{h}})_{F}, \label{eq:full stab-u}\\
	s_b(\bfu_h,q_h) &:=  \sum_{F \in \mcFs}\sum_{j=0}^{k}
	\tau_b h^{2j+1}  ( \jump{D_{\bfn_F}^j (\dive \bfu_{h})}, \jump{D_{\bfn_F}^j q_{h}} )_{F}, \label{eq:full stab-b}\\
	s_c(\phi_h,\chi_h) &:=  \sum_{F \in \mcFsi}\sum_{j=0}^{k+1}
	\tau_c h^{2j-1} ( \jump{D^j \phi_h}, \jump{D^j \chi_h} )_F \label{eq:full stab-c} \\
	&\quad +\sum_{j=1}^{k+1}
   \tau_c h^{2j-1} ( D_{\bfn}^j \phi_h, D_{\bfn}^j \chi_h )_\Sigma. \nonumber
\end{align}
We define the jump operator across a face $F\in \mcFs$ by
\begin{equation}
	\jump{v}(\bfx) := \lim_{\epsilon\to 0^+} (v(\bfx+\epsilon\bfn_F) - v(\bfx-\epsilon\bfn_F)), \label{eq:jump}
\end{equation}
where $\bfn_F$ is the normal vector associated to $F$ and $\bfx\in\Omdh$. For a vector valued function $\bfv$ we define $[\bfv]$ component-wise as \eqref{eq:jump}. 
Here $D^j_{\bfn_F} \bfv_h$ denotes the normal derivative of order $j$ across the face $F$, with $\jump{D^0_{\bfn_F} \bfv_h} = \jump{\bfv_h}$. The full derivative $D^j \phi_h$ also includes the tangential derivative(s). 
The stabilization parameters $\tau,\tau_{b},\tau_c$ are positive constants. 

\begin{remark}[Stabilization]
  The stabilization terms extend the control (measured in a suitable norm) of finite element functions from the physical domain to the active domain, see Lemma~\ref{lem:sp_ineq} and~\ref{lem:lagrange}. The stabilization term $s$ extends the control in the standard $L^2$-norm. It is important to stabilize the bilinear form  $b(\bfu_h,q_h)$ correctly in order to not destroy the divergence preserving property of the finite element pair $(\Vk,\Qk)$. Therefore, we proposed in \cite{FraHaNilZa22} to stabilize the bilinear form $b$ using $s_b(\bfu_h,q_h)$. Note that $s_b(\bfu_h,q_h)=s_k(\dive \bfu,q_h)$,  where 
$s_k$ is the standard ghost penalty term, used in earlier work to stabilize the pressure variable, 
\begin{align}
	s_k(p_h,q_h) &:=  \sum_{F \in \mcFs}\sum_{j=0}^{k}
	\tau h^{2j+1}  ( \jump{D^j_{\bfn_F}p_h}, \jump{D^j_{\bfn_F} q_h} )_{F} \label{eq:full stab-p}.
        \end{align}
The sum goes to $k$ since $q_h, \dive \bfu \in \Qk$ while $\bfu, \bfv \in \Vk$ and therefore the sum in $s(\bfu, \bfv)$ goes to $k+1$. We will when it is clear leave out the subscript $k$.
In order to avoid evaluation of derivatives in the implementation of the method we switch the face stabilization terms to the equivalent extension based patch stabilization of~\cite{preuss2018}, see also Appendix \ref{sec:patch-stab}. Then, the stabilization term does not explicitly depend on the polynomial order $k$ and the difference in the stabilization of $\Mf$ and $\Bf$ is only in the argument to $s$. It is sufficient to stabilize over a macroelement partition \cite{LarZah23} of $\Omega_{\mesh}$. This means that stabilization can be applied on a subset of $\mcFs$ leading to increased sparsity in the resulting system matrix and a solution that is less sensitive to the choice of the stabilization constant.  See Appendix \ref{sec:macro-elements} for details on this. 
\end{remark}

\begin{remark}[The approximation space for the discrete Lagrange multiplier] \label{rem:lagrange_why_kp1}
  Let $k=0$, for $\bfv_h\in \RT_0(T)$ and $\Sigma \cap T$ being a subset of a hyperplane for $T \in \mcT_\Sigma$ we have that $\bfv_h\cdot\bfn|_{\Sigma\cap T} \in Q_{0}(\Sigma\cap T)$. For $\chi_h, \phi_h \in Q_{h,0}^{\Sigma}$ the stabilization term $s_c(\phi_h, \chi_h)=\tau_ch^{-1}(\jump{\phi_h}, \jump{\chi_h} )_F$ in equation \eqref{eq:discretedarcy2} is in general not zero (except for the globally constant functions). Thus, the stabilization term $s_c$ perturbs the enforcement of the boundary condition and in general $c(\bfu_h,\chi_h) \neq (u_B,\chi_h)_{\Sigma}$. However, we will always have
\begin{equation}
  \int_{\Sigma} \bfu_h \cdot \bfn=\int_\Sigma u_B,
  \end{equation}
  which is a necessary condition for the method to be divergence preserving, see Section~\ref{sec:divpres}.  In the larger space $\QksZ$ there are more functions $\chi_h$ for which this perturbation due to the stabilization is zero or is small, for example all the continuous piecewise linear functions will only have the jump of the first order derivative left in the stabilization term.  This motivates choosing the approximation space for the Lagrange multipliers to be larger than $Q_{h,0}^{\Sigma}$. We emphasize that choosing $Q_{h,0}^{\Sigma}$, as the space for the Lagrange multipliers, still gives rise to a well-posed problem and optimal error estimates but errors at the boundary can in some cases be much worse than choosing the space $\QksZ$. This can be seen in the numerical example of Section~\ref{sec:alt stab}. 
  Note that for curved unfitted boundaries we no longer have $\bfv_h\cdot\bfn|_{\Sigma\cap T} \in Q_{0}(\Sigma\cap T)$ which also motivates the choice of a larger Lagrange multiplier space than $Q_{h,0}^{\Sigma}$. 
\end{remark}
\begin{remark}[The linear system] \label{rmk:saddle}
	The linear system associated with \eqref{eq:discretedarcy2} can be written as
	\begin{equation*}
		\begin{bmatrix}
			\mathbf{M} & \mathbf{B}^T & \mathbf{C}^T\\
			\mathbf{B} & \mathbf{0} & \mathbf{0} \\
			\mathbf{C} & \mathbf{0} & -\mathbf{S}_c
		\end{bmatrix}
		\begin{bmatrix}
			u \\
			p \\
			\phi
		\end{bmatrix} 
		= 
		\begin{bmatrix}
			F \\
			g \\
			u_B
		\end{bmatrix}
	\end{equation*}
	(Here $(u,p,\phi)$ represent degree of freedom values, and $(F,g,u_B)$ are the corresponding right hand side values.) 
	The weak imposition of the boundary condition on $\Sigma$ via the Lagrange multiplier $\chi$ casts the proposed method into the framework of perturbed composite saddle-point problems \cite[Ch. 3.5.4, Ch. 3.6]{BofBreFor13}, in contrast to \cite{FraHaNilZa22} where the method gives rise to a standard saddle-point problem. 
\end{remark}

\subsection{Alternative methods}
We present alternative stabilization terms to \eqref{eq:discretedarcy2} and study them in Section \ref{sec:numex}. We also compare the presented strategy, of using a stabilized Lagrange multiplier method, with prescribing the Dirichlet boundary condition using penalty.

\subsubsection{Alternative stabilization for the Lagrange multiplier}
An alternative to $s_c$ is to use the following stabilization terms proposed in~\cite{BurHanLarMas2018MFD,GrandeLehr2018Surf} in connection with TraceFEM and CutFEM for surface PDEs:
\begin{align}
	\tilde s_c(\phi_h,\chi_h) &:=  \sum_{F \in \mcFsi}
	\tau_c h^{-1} ( \jump{ \phi_h}, \jump{ \chi_h} )_F \nonumber\\
	&\quad + \tau_ch(D_{\bfn}\phi_h,D_{\bfn}\chi_h)_{\Omsig}. \label{eq:alt stab-c}
\end{align}	
Here the boundary term in $s_c$ (see \eqref{eq:full stab-c}) is replaced by a normal derivative term on $\Omsig$. The choice of the normal vector in the bulk term is not unique and in our test cases the choice for the normal vector affected the accuracy and the condition number. 

In the numerical experiments of Section \ref{sec:alt stab} we obtained better control of the condition number if all the derivatives in the face stabilization term are included or if the patch based stabilization is applied (see Appendix \ref{sec:patch-stab}). As such we propose the following alternative
\begin{align}
	\hat s_c(\phi_h,\chi_h) &:=  \sum_{F \in \mcFsi}\sum_{j=0}^{k+1}
	\tau_c h^{2j-1} ( \jump{D^j \phi_h}, \jump{D^j \chi_h} )_F \nonumber\\
	&\quad + \tau_ch(D_{\bfn}\phi_h,D_{\bfn}\chi_h)_{\Omsig} \label{eq:alt stab-c 2}.
\end{align}	
We compare \eqref{eq:alt stab-c 2} as an alternative to the stabilization \eqref{eq:full stab-c} in Example 1 of Section \ref{sec:numex}. We also compare \eqref{eq:alt stab-c} to \eqref{eq:alt stab-c 2} in the same subsection.

\subsubsection{A non-symmetric penalty formulation}\label{sec:penalty}
In the numerical experiments we compare with imposing the boundary condition weakly using a penalty parameter. 
The method we compare with reads as follows:
Find $(\bfu_h,p_h)\in \Vk \times \Qk$ such that 
\begin{align}\label{eq:discretedarcy_nitsche}
	\tilde \Mf (\bfu_h,\bfv_h) + \tilde\Bf(\bfv_h,p_h) &= (\pmb{f},\bfv)_\Omega + \lambda h^{-1}(u_B,\bfv_h\cdot\bfn)_\Sigma
	&\text{ for all } \bfv_h\in \Vk, \\
	\Bf(\bfu_h,q_h) &= -(g,q_h)_{\Omega} &\text{ for all } q_h\in \Qk . \nonumber
\end{align}
Here $\lambda\in\RR$ is a penalty parameter taken sufficiently large and 
\begin{align}
	\tilde \Mf (\bfu_h,\bfv_h) &:= \lambda h^{-1}(\bfu_h\cdot \bfn,\bfv_h\cdot\bfn)_\Sigma + \Mf (\bfu_h,\bfv_h), \\
	\tilde \Bf(\bfv_h,p_h) &:= (p,\bfv\cdot\bfn)_\Sigma + \Bf(\bfv_h,p_h).
\end{align}
Note that due to the term $(p,\bfv\cdot\bfn)_\Sigma$ in $\tilde \Bf$ this method is non-symmetric. This term is only present in case of Dirichlet boundary conditions. If one has Dirichlet boundary conditions on the entire boundary, the mean value of the pressure has to be prescribed, this is typically done via a Lagrange multiplier. In~\cite{FraNilZa23} we show that this Lagrange multiplier will perturb the divergence condition in the unfitted setting. We propose how to solve this problem in~\cite{FraNilZa23}. We do not include that discussion here. In this paper we compare the proposed scheme to this penalty method in Example 3 of Section \ref{sec:numex}, where we have mixed boundary conditions.

\section{Analysis of the Method}\label{sec:analysis}
The analysis of the method is for the lowest order case, i.e., $k=0$. Here we assume that $\Omega$ is such that $\Sigma\cap T$ is a linear segment in $\RR^2$ or a subset of a hyperplane in $\RR^3$ for every $T\in \mcT_\Sigma$ and all h<H<<1, where $H$ is an initial maximum mesh diameter. We utilize this assumption in Lemma~\ref{lem:dim_count}. 

For $\bfv\in \bfH^{1,\dive}(\Omdh)+\VkZ$, $q\in H^{1}(\Omdh)+\QkZ$, $\chi\in H^{2}(\Omsig)+\QksZ$, define the following norms:
\begin{align}
  \| \chi \|_h^2 &:=
   \|h^{-1/2} \chi \|_\Sigma^2 + s_c(\chi,\chi), \\
	\| q \|^2_{\Omdh} &:= \sum_{T\in\mesh} \| q \|^2_T, \\
	\tn \bfv \tn^2_\Mf &:= \Mf(\bfv,\bfv) 
	= \|\permi^{1/2}\bfv\|^2_{\Omega} + s(\bfv,\bfv), \\
	\tn \bfv \tn^2_h &:= \tn \bfv \tn_\Mf^2 + \| \dive \bfv\|_{\Omdh}^2 + 
	\|h^{1/2}\bfv\cdot \pmb{n}\|^2_{\Sigma}.
\end{align}
Let $D^{\ell}$ be the generalized derivative of order $\ell\geq 1$.
We let the standard seminorm of order $\ell$ over a domain  $U\subset\RR^d$ be denoted by 
\begin{align}
	|p|_{\ell,U}:=\|D^{\ell}p\|_{U}. 
\end{align}
We will many times apply the standard element-wise trace inequality \cite{BreSco} on $\partial T$ and the trace inequality \cite{HanHanLar2003TraceIneq} on $\Sigma \cap T$, $T \in \mesh$,
\begin{align}
  \|p\|_{\partial T} + \|p\|_{\Sigma \cap T} \lesssim h^{-1/2}\|p\|_T + h^{1/2}\|\nabla p\|_T, \quad &\forall p\in H^1(T). \label{eq:trace_ineq}
\end{align}
We will also use standard inverse inequalities~\cite{BreSco},
\begin{equation} \label{eq:inverse}
|p_h|_{j,T}\lesssim h^{s-j}\|p_h \|_{H^s(T)},  \quad 0\leq s \leq j, \ p_h\in Q_\ell(T),\ \ell\geq 0,
\end{equation}
which together with the trace inequality above yields 
  \begin{align}
	\|p_h\|_{\partial T} + \|p_h\|_{\Sigma \cap T} \lesssim h^{-1/2}\|p_h\|_T, \quad &\forall p_h\in Q_\ell(T), \ \ell\geq 0.\label{eq:discrete_trace_ineq}
\end{align}

We start by showing that $\|\cdot\|_{\Omdh}$ and $\|\cdot\|_{\Omega} + s_k(\cdot,\cdot)^{1/2}$ are equivalent norms on $\Qk$, all bilinear forms are continuous and use this result to show that $h^{-1}\|\cdot \|_{\Omsig}$ and $h^{-1/2}\|\cdot\|_{\Sigma}+s_c(\cdot,\cdot)^{1/2} $ are equivalent norms on $\QksZ$.

\begin{lemma}\textbf{(Equivalent norms)}\label{lem:sp_ineq}
	Let $q_h\in\Qk$. The following inequalities hold
	\[ \|q_{h} \|^2_{\Omdh} \lesssim \|q_{h}\|^2_{\Omega} + s_k(q_{h},q_{h}) \lesssim \|q_h\|^2_{\Omdh}. \]
\end{lemma}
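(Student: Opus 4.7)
The upper bound $\|q_h\|_{\Omega}^2+s_k(q_h,q_h)\lesssim \|q_h\|_{\Omega_{\mesh}}^2$ is immediate: $\|q_h\|_{\Omega}\le \|q_h\|_{\Omega_{\mesh}}$ since $\Omega\subset\Omega_{\mesh}$, and for each face $F\in\mcFs$ shared by $T_1,T_2\in\mesh$, applying the discrete trace inequality \eqref{eq:discrete_trace_ineq} and the inverse inequality \eqref{eq:inverse} gives
\[
h^{2j+1}\|\jump{D^j_{\bfn_F} q_h}\|_F^2 \lesssim h^{2j+1}\sum_{i=1,2}\|D^j_{\bfn_F} q_h\|_{\partial T_i}^2 \lesssim \sum_{i=1,2}\|q_h\|_{T_i}^2.
\]
Summing over $F$ and $j=0,\dots,k$, shape regularity ensures each element appears a bounded number of times.

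\textbf{Lower bound (main direction).} The strategy is the standard ghost penalty chain argument. The crucial ingredient is the elementwise ghost penalty lemma: for any $T_1,T_2\in\mesh$ sharing a face $F$ and any $q_h\in \Qk$,
\[
\|q_h\|_{T_1}^2 \lesssim \|q_h\|_{T_2}^2 + \sum_{j=0}^{k} h^{2j+1}\|\jump{D^j_{\bfn_F} q_h}\|_F^2,
\]
which follows from the fact that a polynomial on $T_1$ is uniquely determined by the polynomial on $T_2$ together with the jumps of its normal derivatives up to order $k$ across $F$, combined with scaling on the reference patch. This is the Burman--Hansbo ghost penalty estimate and I would invoke it directly.

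\textbf{Covering by chains.} By quasi-uniformity of $\mcT_{0,h}$ and the assumption that $\Sigma$ is resolved finely enough, for every $T\in\mesh$ there exists a face-connected chain $T=T_N,T_{N-1},\dots,T_0$ of uniformly bounded length $N$ such that $T_0$ is an uncut element with $|T_0\cap\Omega|\gtrsim h^d$. Iterating the elementwise estimate along the chain yields
\[
\|q_h\|_T^2 \lesssim \|q_h\|_{T_0}^2 + \sum_{i=0}^{N-1}\sum_{j=0}^{k} h^{2j+1}\|\jump{D^j_{\bfn_{F_i}} q_h}\|_{F_i}^2,
\]
where the faces $F_i = T_i\cap T_{i+1}$ all lie in $\mcFs$. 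For the anchor element $T_0$, norm equivalence of polynomials on a shape-regular element and the uniform lower bound on $|T_0\cap\Omega|/|T_0|$ give $\|q_h\|_{T_0}^2\lesssim \|q_h\|_{T_0\cap\Omega}^2\le \|q_h\|_{\Omega}^2$. Summing over $T\in\mesh$, using that each face is used in a bounded number of chains, finishes the proof.

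\textbf{Main obstacle.} The only nontrivial step is the elementwise ghost penalty estimate; the chain construction and summation are routine bookkeeping under quasi-uniformity. Since the face-based ghost penalty \eqref{eq:full stab-p} includes all normal derivatives up to the polynomial degree $k$, this estimate is available off the shelf, and no new argument beyond carefully tracking the $h$-powers is required.
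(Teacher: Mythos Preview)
Your proposal is correct and follows essentially the same approach as the paper. The paper handles the upper bound exactly as you do (trace inequality plus inverse inequality on each face term), and for the lower bound simply cites \cite[Lemma~3.8]{HaLaZa14} and \cite[Lemma~5.1]{MaLaLoRo14}, whose content is precisely the face-chain ghost penalty argument you have sketched; you have merely unpacked what the paper defers to the literature.
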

\begin{proof}
	For the leftmost inequality see \cite[Lemma 3.8]{HaLaZa14} or \cite[Lemma 5.1]{MaLaLoRo14}. For $q_h\in\Qk$, the standard element-wise trace inequality followed by the standard inverse inequality gives
	\begin{align}
		s_k(q_{h},q_{h}) &
                \lesssim \sum_{T\in\mcT_{h}} \| q_h \|_{T}^2.
	\end{align}
\end{proof}
A similar proof can be applied component-wise to show that
\begin{align}\label{eq:sa_stabbound}
\|\bfv_h\|^2_{\Omdh} \lesssim	\|\bfv_h\|^2_\Omega+s(\bfv_h,\bfv_h) &\lesssim \|\bfv_h\|^2_{\Omdh}, \quad\quad \bfv_h\in\Vk.
\end{align}

\begin{lemma}\textbf{(Continuity)}\label{lem:continuity}
	The bilinear forms are continuous; 
	\begin{align}
		\Mf(\bfu,\bfv) &\lesssim \tn \bfu \tn_h \tn \bfv \tn_h,& \forall \bfu,\bfv\in \left(\bfH^{1,\dive}(\Omdh)+\VkZ\right), \\
		\Bf(\bfv,q) &\lesssim \tn \bfv \tn_h\|q\|_{\Omdh},& \forall \bfv\in \left(\bfH^{1,\dive}(\Omdh)+\VkZ\right),q\in \left(H^{1}(\Omdh)+\QkZ\right),\label{eq:contB}\\
		c(\bfv,\chi) &\lesssim \|h^{1/2}\bfv\cdot\bfn\|_{\Sigma}\|h^{-1/2}\chi\|_\Sigma,
                & \forall \bfv\in \left(\bfH^{1,\dive}(\Omdh)+\VkZ\right),\chi \in \left(H^{2}(\Omsig)+\QksZ\right), \\
		s_c(\phi,\chi) &\lesssim \|h^{-1}\phi\|_{\Omsig}\|h^{-1}\chi\|_{\Omsig},
                & \forall \phi\in \left(H^{2}(\Omsig)+\QksZ\right),\chi\in \left(H^{2}(\Omsig)+\QksZ\right).\label{eq:contsc}
	\end{align}
\end{lemma}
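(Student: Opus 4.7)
The plan is to establish each of the four bounds directly by Cauchy--Schwarz, relying on Lemma~\ref{lem:sp_ineq} and the discrete trace/inverse inequalities \eqref{eq:discrete_trace_ineq}, \eqref{eq:inverse} only where a stabilization term must be rewritten in terms of a computable norm. For $\Mf(\bfu,\bfv) = (\permi\bfu,\bfv)_\Omega + s(\bfu,\bfv)$ I would apply Cauchy--Schwarz to each summand (using boundedness of $\permi^{1/2}$ on the bulk term) and then recognize the resulting factors as $\tn\bfu\tn_\Mf\tn\bfv\tn_\Mf \le \tn\bfu\tn_h\tn\bfv\tn_h$. The $c$-bound is essentially immediate: writing $(\bfv\cdot\bfn,\chi)_\Sigma = (h^{1/2}\bfv\cdot\bfn,h^{-1/2}\chi)_\Sigma$ and applying Cauchy--Schwarz on $\Sigma$ yields the claim since $h$ is piecewise constant on the mesh.

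For $\Bf(\bfv,q)=-(\dive\bfv,q)_\Omega - s_b(\bfv,q)$ I would estimate the bulk term by $\|\dive\bfv\|_\Omega\|q\|_\Omega \le \|\dive\bfv\|_{\Omdh}\|q\|_{\Omdh}\le \tn\bfv\tn_h\|q\|_{\Omdh}$. For the stabilization I would use that by construction $s_b(\bfv,q)$ is the standard ghost penalty $s_k$ applied to the pair $(\dive\bfv,q)$ (compare \eqref{eq:full stab-b} with \eqref{eq:full stab-p}), so Cauchy--Schwarz gives $s_b(\bfv,q)\le s_k(\dive\bfv,\dive\bfv)^{1/2}\,s_k(q,q)^{1/2}$. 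Applying Lemma~\ref{lem:sp_ineq} to $\dive\bfv\in\Qk$ and to $q\in\Qk$ then controls each factor by the corresponding $L^2(\Omdh)$-norm; for the smoother components of $\bfv$ and $q$ the jumps across interior faces vanish by $H^1$-regularity, so no additional contribution arises.

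The main bookkeeping lies in the last estimate. After Cauchy--Schwarz $s_c(\phi,\chi)\le s_c(\phi,\phi)^{1/2}s_c(\chi,\chi)^{1/2}$, it suffices to show $s_c(\chi,\chi)\lesssim\|h^{-1}\chi\|_{\Omsig}^2$ for $\chi\in\QksZ$; for $H^2$ contributions the interior-face jumps vanish for $j=0,1$ by continuity of $\chi$ and $\nabla\chi$, while the normal-derivative term on $\Sigma$ is absorbed via \eqref{eq:trace_ineq}. Combining the discrete trace estimate \eqref{eq:discrete_trace_ineq} with the inverse inequality \eqref{eq:inverse} gives $\|D^j\chi\|_F\lesssim h^{-1/2-j}\|\chi\|_{T_1\cup T_2}$ and $\|D^j_{\bfn}\chi\|_{\Sigma\cap T}\lesssim h^{-1/2-j}\|\chi\|_T$, so every term in $s_c(\chi,\chi)$ is bounded by $h^{2j-1}\cdot h^{-1-2j}\|\chi\|^2 = h^{-2}\|\chi\|^2$; summing over the finitely many faces in $\mcFsi$ and over the intersected elements yields $s_c(\chi,\chi)\lesssim h^{-2}\|\chi\|_{\Omsig}^2$ and hence \eqref{eq:contsc}. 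The main obstacle is precisely this last calculation: the $j$-dependent weights $h^{2j-1}$ have to conspire with the trace and inverse estimates to produce a $j$-independent $h^{-2}$ factor, and to match the boundary term on $\Sigma$ with the corresponding face contribution. The remaining three bounds reduce to a single application of Cauchy--Schwarz each.
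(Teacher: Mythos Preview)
Your proposal is correct and follows essentially the same route as the paper: Cauchy--Schwarz for $\Mf$ and $c$, Cauchy--Schwarz plus Lemma~\ref{lem:sp_ineq} for $\Bf$ (with the observation that the stabilization contribution vanishes on the smooth components), and discrete trace plus inverse inequalities for $s_c$. The only cosmetic difference is that for $s_c$ you first diagonalize via $s_c(\phi,\chi)\le s_c(\phi,\phi)^{1/2}s_c(\chi,\chi)^{1/2}$ and then bound $s_c(\chi,\chi)$, whereas the paper applies Cauchy--Schwarz term by term; the resulting estimates are identical.
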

\begin{proof}
Applying Cauchy-Schwartz we see that $\Mf(\bfu,\bfv) \lesssim \Mf(\bfu,\bfu)^{1/2}\Mf(\bfv,\bfv)^{1/2}$ whereby the first inequality follows.
For $\Bf$ we also apply Cauchy-Schwartz to get
	\begin{align}
          \Bf(\bfv,q) &= -(\dive\bfv,q)_{\Omega}-s_b(\bfv,q) \lesssim \| \dive\bfv \|_{\Omdh}\|q\|_{\Omdh} + s_k(\dive\bfv,\dive\bfv)^{1/2}s_k(q,q)^{1/2}.
	\end{align}
        Note that for functions $\bfv\in \bfH^{1,\dive}(\Omdh)$ or $q \in \bfH^{1}(\Omdh)$ the term $s_k(\dive\bfv,\dive\bfv) s_k(q,q)$ with $k=0$ is zero and we have shown that $\Bf$ is continuous. For $q,\dive\bfv \in \QkZ$ we can apply Lemma \ref{lem:sp_ineq} to get
       \begin{align}
          \Bf(\bfv,q) 
		&\lesssim \| \dive\bfv\|_{\Omdh}\|q\|_{\Omdh}  \lesssim \tn\bfv\tn_h\|q\|_{\Omdh}.
	\end{align} 
        Applying the Cauchy-Schwartz inequality we directly get
	\begin{align}
		c(\bfv,\chi) &= (\bfv\cdot\bfn,\chi)_{\Sigma} 
		\lesssim \|h^{1/2}\bfv\cdot\bfn\|_{\Sigma}\|h^{-1/2}\chi\|_\Sigma.
           	\end{align}

Finally, the stabilization term $s_c$ is zero for functions in $H^{2}(\Omsig)$ and for $\phi, \chi \in \QksZ$ we apply Cauchy-Schwartz, the inequality \eqref{eq:discrete_trace_ineq}, and a standard inverse inequality to get  
	\begin{align}
		s_c(\phi,\chi) &= \sum_{F \in \mcFsi} \tau_c\left(
			h^{-1} \|\jump{\phi}\|_F\|\jump{\chi}\|_F + \tau_c h \|\jump{\nabla\phi}\|_F\|\jump{\nabla\chi}\|_F\right) +h\|\nabla\phi \cdot \bfn\|_{\Sigma}\|\nabla\chi \cdot \bfn\|_{\Sigma}  \\
		&\lesssim \sum_{T\in\Omega_\Sigma} \left( h^{-2} \|\phi\|_T \|\chi\|_T +  \|\nabla\phi\|_T \|\nabla\chi\|_T \right) \nonumber\\
		        &\lesssim \|h^{-1}\phi\|_{\Omsig}\|h^{-1}\chi\|_{\Omsig}
                                           \nonumber
	\end{align}
      
\end{proof}

\begin{lemma}\label{lem:lagrange}
  The following inequality holds for any $\chi_h\in\QksZ$, 
	\begin{align}\label{eq:Z_control}
	 \|h^{-1}\chi_h\|_{\Omsig}^2 
	  \lesssim \|h^{-1/2} \chi_h\|^2_{\Sigma}+s_c(\chi_h,\chi_h)\lesssim
          \|h^{-1}\chi_h\|_{\Omsig}^2.
	\end{align}
\end{lemma}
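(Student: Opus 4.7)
The plan is to prove the two inequalities separately. The rightmost bound is routine: for each $T \in \mcT_\Sigma$, the discrete trace inequality \eqref{eq:discrete_trace_ineq} gives $h^{-1}\|\chi_h\|_{\Sigma \cap T}^2 \lesssim h^{-2}\|\chi_h\|_T^2$; each face-jump term $h^{2j-1}\|\jump{D^j\chi_h}\|_F^2$ in $s_c$ is reduced to an element-norm bound of the same order by combining \eqref{eq:discrete_trace_ineq} with the inverse inequality \eqref{eq:inverse} applied to the polynomial $D^j\chi_h$; the boundary normal-derivative term in $s_c$ is treated identically. Summing over elements of $\mcT_\Sigma$ and faces of $\mcFsi$ yields the upper bound.

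The leftmost inequality is the ghost-penalty step and is the heart of the lemma. I would follow the strategy developed for face stabilization in \cite{HaLaZa14, MaLaLoRo14} and adapted to surface-type stabilizations in \cite{BurHanLarMas2018MFD, GrandeLehr2018Surf}. Partition $\mcT_\Sigma$ into \emph{fat} elements $\mcT_\Sigma^F := \{T : |\Sigma \cap T| \geq c_0 h^{d-1}\}$ and \emph{thin} elements $\mcT_\Sigma \setminus \mcT_\Sigma^F$, for a shape-regularity dependent constant $c_0 > 0$. By the flat-cut assumption on $\Sigma \cap T$ stated at the beginning of Section \ref{sec:analysis}, together with quasi-uniformity and $h \leq H \ll 1$, every thin element is connected to a fat one through a chain of bounded length formed by faces in $\mcFsi$; this combinatorial fact is of the same flavor as Lemma \ref{lem:dim_count}.

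On a fat element $T \in \mcT_\Sigma^F$, $\Sigma \cap T$ lies in a hyperplane and has $(d-1)$-measure comparable to $h^{d-1}$, while $\chi_h|_T$ is a polynomial of degree $k+1$. Writing this polynomial in coordinates adapted to $\Sigma \cap T$, its coefficients are recovered from $\chi_h|_{\Sigma \cap T}$ and $D_{\bfn}^j \chi_h|_{\Sigma \cap T}$ for $j = 1, \ldots, k+1$, and a standard scaling argument on the reference element yields
\begin{equation*}
h^{-2}\|\chi_h\|_T^2 \lesssim h^{-1}\|\chi_h\|_{\Sigma \cap T}^2 + \sum_{j=1}^{k+1} h^{2j-1}\|D_\bfn^j \chi_h\|_{\Sigma \cap T}^2,
\end{equation*}
whose right-hand side, after summation over $\mcT_\Sigma^F$, is controlled by $\|h^{-1/2}\chi_h\|_\Sigma^2$ together with the boundary normal-derivative part of $s_c(\chi_h,\chi_h)$. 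For a thin element $T$ with a neighbor $T'$ one step closer to a fat element through a shared face $F \in \mcFsi$, let $\tilde\chi_h^{T'}$ denote the canonical polynomial extension of $\chi_h|_{T'}$ to $\RR^d$. A standard ghost-penalty estimate, cf.\ \cite{preuss2018}, gives $\|\chi_h - \tilde\chi_h^{T'}\|_T^2 \lesssim \sum_{j=0}^{k+1} h^{2j+1}\|\jump{D^j \chi_h}\|_F^2$, while $\|\tilde\chi_h^{T'}\|_T^2 \lesssim \|\chi_h\|_{T'}^2$ follows from finite-dimensional norm equivalence on shape-regular patches. Iterating along the chain back to a fat element accumulates only finitely many jump contributions, all absorbed by the face part of $s_c(\chi_h,\chi_h)$.

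The main obstacle is the thin-element propagation: the polynomial-difference estimate across a face must include \emph{all} jumps $\jump{D^j \chi_h}$ for $j = 0, \ldots, k+1$, and it is precisely this requirement that forces the full sum in the definition \eqref{eq:full stab-c} of $s_c$ rather than only the $j=0$ jump as used in \cite{BurHanLarst24}. A secondary care point, specific to $k=0$, is verifying that the scaling step on fat elements closes using \emph{only} the boundary normal-derivative term in $s_c$ (there is no tangential-derivative boundary term): this works because $Q_1$ polynomials are uniquely determined, with stable dependence, by their trace and first normal derivative on a hyperplane slice of measure $\gtrsim h^{d-1}$.
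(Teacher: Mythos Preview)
Your upper-bound argument is correct and coincides with the paper's: the paper simply invokes the discrete trace inequality \eqref{eq:discrete_trace_ineq} together with the already-established continuity estimate \eqref{eq:contsc} for $s_c$, which is exactly what your term-by-term treatment reproduces.

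For the lower bound the two approaches differ in presentation rather than substance. The paper dispatches it in one line by citing \cite[Lemma~3.2]{LarZah23}, which gives $\|\chi_h\|_{\Omsig}^2 \lesssim h\|\chi_h\|_\Sigma^2 + h^2 s_c(\chi_h,\chi_h)$ directly. Your fat/thin partition with chain propagation is precisely the mechanism underlying that cited result (and its antecedents in \cite{BurHanLarMas2018MFD, GrandeLehr2018Surf}): control on fat elements from the $\Sigma$-trace plus normal derivatives, then transport to thin elements via the full set of face jumps. Your sketch is sound and has the merit of making explicit why \emph{both} ingredients of $s_c$---the boundary normal-derivative term and the full-order face jumps---are needed, something the paper's proof leaves buried in the citation. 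One small quibble: the connectivity of thin to fat elements through $\mcFsi$ is not really ``of the same flavor as Lemma~\ref{lem:dim_count}''; that lemma constructs a specific velocity correction and has nothing to do with chain connectivity. The geometric fact you need is standard in the ghost-penalty and surface-stabilization literature (and is part of what \cite{LarZah23} proves), but it would require its own justification if you were writing the argument self-contained rather than citing.
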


\begin{proof}
  From \cite[Lemma 3.2]{LarZah23} we have 
  \begin{align}
         \|\chi_h\|^2_{\Omsig} 
		\lesssim h \|\chi_h\|^2_{\Sigma}+h^2s_c(\chi_h,\chi_h).
  \end{align}
Using also inequality \eqref{eq:discrete_trace_ineq} and the continuity of $s_c$~\eqref{eq:contsc}  we obtain the desired estimate
\begin{align}
h^{-2}\|\chi_h\|^2_{\Omsig} 
\lesssim h^{-1} \|\chi_h\|^2_{\Sigma}+s_c(\chi_h,\chi_h) \lesssim h^{-2}\|\chi_h\|^2_{\Omsig}.
 \end{align}
 \end{proof}

Before we prove stability and derive a priori error estimates we show a result on the divergence preserving property of the proposed scheme. 

\subsection{Divergence preserving properties}\label{sec:divpres}
That the proposed method produces pointwise divergence-free approximations of solenoidal velocity fields (the case $g=0$) follows from the proof in \cite[Theorem 3.2]{FraHaNilZa22}. The proposed method will not satisfy $\dive \bfu_h = g$ pointwise for a general $g$. However, the condition is satisfied pointwise also for nonzero functions $g$ for which there is an extension $E_{\Omdh}g$ for which $\dive\bfu_h-E_{\Omdh}g  \in \Qk$ and $s_k(E_{\Omdh}g, E_{\Omdh}g)=0$. For an example see Theorem~\ref{thm:divpres}. 
For a local mass conservation which always holds we refer the reader to the end of Appendix \ref{sec:macro-elements}. 

Let $\Pk \subset H^{1}(\Omdh)$ be the space of continuous Lagrange polynomials of order $k\in\{0,1\}$ defined on $\Omdh$. Consider the restriction to $\Omega$:
\begin{align}
	\Pk|_\Omega=\left\{q_h|_\Omega : q_h\in\Pk\right\}\subset H^1(\Omega)
\end{align}
and let $E_{\Omdh} : \Pk|_{\Omega} \to \Pk$ be the canonical polynomial extension operator such that $E_{\Omdh}q|_{\Omega}=q$ for $q\in\Pk(\Omega).$
\begin{thm}\textbf{(The divergence-preserving property)}\label{thm:divpres}
	Let $g$ be such that $E_{\Omdh}g\in\Pk$ and assume $\bfu_h\in\Vk$ satisfies \eqref{eq:discretedarcy2}. Then $\dive\bfu_h = g$ in $\Omega$.
\end{thm}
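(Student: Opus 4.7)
The plan is to derive the identity directly from the second equation in \eqref{eq:discretedarcy2} by testing with a carefully chosen $q_h$. Since $s_b(\bfu_h,q_h)=s_k(\dive\bfu_h,q_h)$, the equation reads $(\dive\bfu_h-g,q_h)_{\Omega}+s_k(\dive\bfu_h,q_h)=0$ for every $q_h\in\Qk$. Because $g=E_{\Omdh}g$ on $\Omega$, I would rewrite this as
\begin{equation*}
(\dive\bfu_h-E_{\Omdh}g,q_h)_{\Omega}+s_k(\dive\bfu_h-E_{\Omdh}g,q_h)+s_k(E_{\Omdh}g,q_h)=0.
\end{equation*}
The assumption $E_{\Omdh}g\in\Pk\subset H^1(\Omdh)$ ensures the jumps required in $s_k(E_{\Omdh}g,\cdot)$ vanish (in particular $s_k(E_{\Omdh}g,E_{\Omdh}g)=0$), so by Cauchy--Schwarz the mixed term $s_k(E_{\Omdh}g,q_h)$ is zero for every $q_h$.

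The natural candidate test function is then $q_h:=\dive\bfu_h-E_{\Omdh}g$. The key verification is that this choice actually belongs to $\Qk$. Elementwise it does, since $\dive\bfu_h|_T\in Q_k(T)$ by the commuting diagram \eqref{eq:commuting_diagram} and $E_{\Omdh}g|_T\in Q_k(T)$ by construction. The delicate point, and the main obstacle, is checking the zero-mean constraint $\int_{\Omega}q_h=0$. For this I would invoke the third discrete equation \eqref{eq:discretedarcy2} tested with the constant function $\chi_h\equiv 1\in\QksZ$; all jumps and derivatives in $s_c$ then vanish, so $s_c(\phi_h,1)=0$ and the equation reduces to $\int_{\Sigma}\bfu_h\cdot\bfn=\int_{\Sigma}u_B$. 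Combined with the data compatibility $\int_{\Omega}g=\int_{\Sigma}u_B$ (necessary for well-posedness of \eqref{eqs:strongDarcy}) and the divergence theorem applied to $\bfu_h\in\bfH^{\dive}(\Omdh)$, this yields $\int_{\Omega}(\dive\bfu_h-E_{\Omdh}g)=0$.

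With $q_h=\dive\bfu_h-E_{\Omdh}g\in\Qk$ admissible, substituting it into the rewritten identity gives
\begin{equation*}
\|\dive\bfu_h-E_{\Omdh}g\|_{\Omega}^{2}+s_k(\dive\bfu_h-E_{\Omdh}g,\dive\bfu_h-E_{\Omdh}g)=0.
\end{equation*}
Both terms are nonnegative, so each vanishes; in particular $\dive\bfu_h=E_{\Omdh}g=g$ pointwise in $\Omega$. The argument is short once one realises that the boundary-row test with $\chi_h=1$ is what supplies the zero-mean property needed to plug a residual back in as a test function; this is the only nonroutine step, and it explains why the Lagrange-multiplier stabilization had to be chosen so that $s_c(\cdot,1)=0$ (as already highlighted in Remark~\ref{rem:lagrange_why_kp1}).
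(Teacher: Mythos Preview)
Your proof is correct and follows essentially the same approach as the paper's own proof: test the second discrete equation with $q_h=\dive\bfu_h-E_{\Omdh}g$, use continuity of $E_{\Omdh}g$ to kill the stabilization cross term, and conclude from nonnegativity. Your verification of the zero-mean constraint via the third equation with $\chi_h\equiv 1$ is exactly the mechanism the paper relies on (it is the content of the identity $\int_\Sigma\bfu_h\cdot\bfn=\int_\Sigma u_B$ noted in Remark~\ref{rem:lagrange_why_kp1}); you have simply spelled it out more explicitly than the paper does.
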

\begin{proof}
	We have that $\bfu_h\in\Vk$ satisfies 
	\begin{align*}
		0 = \int_{\Omega} (\dive\bfu_h -g)q_h + \sum_{F \in \mcFs}\sum_{j=0}^{k}
		\tau_b h^{2j+1}  ( \jump{D_{\bfn_F}^j (\dive \bfu_{h})}, \jump{D_{\bfn_F}^j q_{h}} )_{F},  \quad \forall  q_h\in\Qk.
	\end{align*}
	We may choose $q_h=\dive\bfu_h-E_{\Omdh}g$. Note that $\int_\Omega q_h=\int_{\Omega} \dive\bfu_h- \dive\bfu=\int_{\Sigma} \bfu_h \cdot \bfn- u_B=0$ hence $\dive \bfu_h-E_{\Omdh}g\ \in \Qk$.   
	Moreover, $\jump{E_{\Omdh}g}=0$ over faces $F\in\mcFs$ since $E_{\Omdh}g\in \Pk\subset H^{1}(\Omdh)$ and hence we can subtract $s_k(E_{\Omdh}g,E_{\Omdh}g)$. Finally, by Lemma \ref{lem:sp_ineq} we get
	\begin{equation}
		0=\|\dive\bfu_h-E_{\Omdh}g \|_{{\Omega}}^2+ s_k(\dive\bfu_h-E_{\Omdh}g, \dive\bfu_h-E_{\Omdh}g)
		\gtrsim \|\dive\bfu_h-E_{\Omdh}g \|_{\Omdh}^2\geq 0.
	\end{equation}
	 Thus, $\dive\bfu_h|_{\Omega} = E_{\Omdh}g|_{\Omega}=g$.
\end{proof}

\subsection{Stability}\label{sec:infsup}
We first prove an inf-sup result for $\Bf$ and then for the full system.
Let $\Omeps$ be a convex domain in $\RR^d$ such that $\Omdh \subset \Omeps$ for all $h<H$.
\begin{lemma} \textbf{(Inf-sup condition of $\Bf$)}\label{lem:B_infsup}
  For every $q_h\in \QkZ$ there exists $\bfv_h \in \Vk$ such that 
\begin{align}
\Bf(\bfv_h,q_h) \gtrsim \| q_{h} \|^2_{\Omdh}, \quad \tn \bfv_h \tn_h \lesssim \| q_h\|_{\Omdh}.
\label{eq:B_infsup} 
\end{align}
\end{lemma}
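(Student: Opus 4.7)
My plan is to reduce to the equivalent norm $\|q_h\|^2_\Omega + s_k(q_h,q_h)$ via Lemma~\ref{lem:sp_ineq}, and then build the test function $\mathbf{v}_h$ as a sum of two pieces: one that recovers $\|q_h\|^2_\Omega$ through a Fortin-type construction on an enlarged domain, and one that recovers the ghost-penalty term $s_k(q_h,q_h)$ by exploiting the jumps of $\dive \mathbf{v}_h$ across faces in $\mcFs$. Since $\mathcal{B}(\mathbf{v}_h,q_h)=-(\dive\mathbf{v}_h,q_h)_\Omega - s_k(\dive\mathbf{v}_h,q_h)$, the same $\dive$-based structure handles both parts.

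For the bulk piece: enlarge $\Omega$ into a Lipschitz convex $\Omeps\supset\Omdh$ and extend $q_h|_\Omega$ to some mean-zero $q_h^\ast\in L^2(\Omeps)$ with $\|q_h^\ast\|_{\Omeps}\lesssim\|q_h\|_\Omega$. The continuous inf-sup on $\Omeps$ then yields $\mathbf{v}\in\bfH^1_0(\Omeps)$ with $\dive\mathbf{v}=-q_h^\ast$ and $\|\mathbf{v}\|_{\bfH^1(\Omeps)}\lesssim\|q_h\|_\Omega$. Set $\mathbf{v}_h^{(1)}:=\interpv\mathbf{v}\in\VkZ$. By the commuting diagram \eqref{eq:commuting_diagram}, $\dive \mathbf{v}_h^{(1)}=\projp\dive\mathbf{v}=-\projp q_h^\ast$, and since $q_h$ is piecewise constant so $\projp q_h=q_h$ and $\projp$ is self-adjoint on $L^2$, we obtain $-(\dive\mathbf{v}_h^{(1)},q_h)_\Omega = (q_h^\ast,q_h)_\Omega = \|q_h\|_\Omega^2$. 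The residual stabilization term $-s_k(\dive\mathbf{v}_h^{(1)},q_h)$ is then handled by Cauchy–Schwarz and Young's inequality together with the bound $s_k(\dive\mathbf{v}_h^{(1)},\dive\mathbf{v}_h^{(1)})^{1/2}\lesssim \|\dive\mathbf{v}_h^{(1)}\|_{\Omdh}\lesssim\|q_h\|_\Omega$ coming from Lemma~\ref{lem:sp_ineq} applied to $\dive\mathbf{v}_h^{(1)}\in\QkZ$.

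For the ghost-penalty piece: following the device used in~\cite{FraHaNilZa22}, construct a discrete function $\mathbf{v}_h^{(2)}\in\VkZ$ supported near $\mcFs$ whose divergence mimics $q_h$ across the stabilization faces, so that $-s_k(\dive \mathbf{v}_h^{(2)},q_h) \gtrsim s_k(q_h,q_h)$ while $\|\dive\mathbf{v}_h^{(2)}\|_{\Omdh}\lesssim s_k(q_h,q_h)^{1/2}$. The cross term $-(\dive\mathbf{v}_h^{(2)},q_h)_\Omega$ is then absorbed by Young's inequality against $\|q_h\|_\Omega^2$ and a small fraction of $s_k(q_h,q_h)$. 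Taking $\mathbf{v}_h:=\mathbf{v}_h^{(1)}+\beta\,\mathbf{v}_h^{(2)}$ for a sufficiently small but fixed $\beta>0$, the absorption arguments combine to give $\mathcal{B}(\mathbf{v}_h,q_h)\gtrsim \|q_h\|_\Omega^2 + s_k(q_h,q_h) \simeq \|q_h\|_{\Omdh}^2$.

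The norm bound $\tn\mathbf{v}_h\tn_h\lesssim \|q_h\|_{\Omdh}$ follows from three ingredients: $\bfH^{\dive}$-stability of $\interpv$ with $\|\interpv\mathbf{v}\|_{\Omdh}\lesssim \|\mathbf{v}\|_{\bfH^1(\Omeps)}$, a stability estimate $s(\interpv\mathbf{v},\interpv\mathbf{v})^{1/2}\lesssim\|\mathbf{v}\|_{\bfH^1(\Omeps)}$ (cleanest via the equivalent patch-based stabilization recalled in Appendix~B), and the discrete trace inequality \eqref{eq:discrete_trace_ineq} which controls the boundary piece $\|h^{1/2}\mathbf{v}_h\cdot\bfn\|_\Sigma$ by $\|\mathbf{v}_h\|_{\Omdh}$. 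I expect the main obstacle to be the explicit construction and analysis of $\mathbf{v}_h^{(2)}$: one must design a locally supported $\RT_0$ field whose normal fluxes across the stabilization faces recover $\jump{q_h}$ without destroying the divergence structure, and then estimate its stabilization seminorm sharply enough for the Young-type absorption step to work with constants independent of $h$ and of how $\Sigma$ cuts the mesh.
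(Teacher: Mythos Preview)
Your approach can be made to work, but it is considerably more involved than necessary, and the ``main obstacle'' you identify --- the construction of $\bfv_h^{(2)}$ --- can be avoided entirely.

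The paper's proof uses a single test function. The key observation you are missing is that $q_h\in\QkZ$ is already defined on all of $\Omdh$, not merely on $\Omega$. Extend $q_h$ by zero from $\Omdh$ to $\Omeps$ (rather than extending $q_h|_\Omega$ to some mean-zero $q_h^\ast$), apply the stable lifting of Lemma~\ref{lem:divergence_Poincare} on $\Omeps$, and restrict back to $\Omdh$: this yields $\bfv\in\bfH^1(\Omdh)$ with $\dive\bfv=-q_h$ \emph{exactly} on $\Omdh$. Interpolating via the commuting diagram~\eqref{eq:commuting_diagram} then gives $\bfv_h=\interpv\bfv$ with $\dive\bfv_h=\projp(-q_h)=-q_h$ exactly, since $q_h$ already lies in $\QkZ$. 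Consequently
\[
\Bf(\bfv_h,q_h)=-(\dive\bfv_h,q_h)_\Omega - s_k(\dive\bfv_h,q_h)=\|q_h\|_\Omega^2 + s_k(q_h,q_h)\gtrsim\|q_h\|_{\Omdh}^2,
\]
so the bulk term and the ghost-penalty term are recovered simultaneously --- no cross terms to absorb, no Young's inequality, and no auxiliary $\bfv_h^{(2)}$.

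Your two-piece decomposition is the generic Fortin strategy one resorts to when $\dive\bfv_h$ cannot be made equal to $-q_h$; here, the specific design of $s_b$ (it acts only through $\dive$) together with the commuting diagram makes that exact relation available, and exploiting it is precisely the point. Your plan for bounding $\tn\bfv_h\tn_h$ by $\|q_h\|_{\Omdh}$ is essentially what the paper does.
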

\begin{proof}
	Fix some $q_h \in \QkZ$. Let $q_{h}^e\in L^2(\Omeps)$ be the extension by $0$ of $q_h$ to $\Omeps$.
	We can apply Lemma \ref{lem:divergence_Poincare} to $-q_{h}^e$ to obtain a function $\bfv^e$ whose restriction to $\Omdh$, $\bfv = \bfv^e|_{\Omdh}$, lies inside the set 
	\begin{align}
		\{\bfv\in\bfH^1(\Omdh) : \dive\bfv = -q_h,\ \|\bfv\|_{\bfH^1(\Omdh)} \lesssim \|q_h\|_{\Omdh} \}\subset \bfH^{\dive}(\Omdh).
	\end{align} 
	(Note that the hidden constant in $\|\bfv\|_{\bfH^1(\Omdh)} \lesssim \|q_h\|_{\Omdh}$ does not depend on $h$ since the bound from Lemma \ref{lem:divergence_Poincare} is over $\Omeps$.)
	
	For $\bfv\in\bfH^{\dive}(\Omdh)$ consider the interpolant $\bfv_h=\pi_h\bfv\in\VkZ$. From the commuting diagram~\eqref{eq:commuting_diagram}, $\dive\bfv_h=\dive\bfv=-q_h$, and it follows that
	\begin{align*}
		\Bf(\bfv_{h},q_{h}) &= -(\dive \bfv_{h},q_{h})_{\Omega} - s_b(\bfv_{h},q_{h}) \\
		&= \|q_{h} \|_{\Omega}^2 + s_k(q_h, q_h) \gtrsim \| q_{h} \|^2_{\Omdh},
	\end{align*}
	where in the last inequality we used Lemma \ref{lem:sp_ineq}.
	Thus $\Bf(\bfv_{h},q_{h}) \gtrsim\| q_{h} \|^2_{\Omdh}  $ and we are done if we can show that $\tn \bfv_h \tn_h \lesssim \| q_h\|_{\Omdh}$. The interpolation operator is stable in the following sense, see \eqref{eq:stabilityintp}:
	\begin{align}\label{eq:vh_divstab}
		\| \bfv_h \|_{\bfH^{\dive}(\Omdh)} \lesssim \|\bfv\|_{\bfH^1(\Omdh)} \lesssim \|q_h\|_{\Omdh}.
	\end{align}
	Recall the definition of the norm $\tn \bfv_{h} \tn_h^2$,
	\begin{align*}
		\tn \bfv_{h} \tn_h^2 &= 
		\underbrace{ \|\permi^{1/2}\bfv_h\|^2_{\Omega}+s(\bfv_h,\bfv_h)
		+
		\|\dive\bfv_h\|^2_{\Omdh}}_{\mathbf{I}}+
		\underbrace{ \|h^{1/2}\bfv_h\cdot\bfn\|^2_{\Sigma} }_{\mathbf{II}}.
	\end{align*}
	\textbf{Term I}. 
	By inequalities \eqref{eq:sa_stabbound} and \eqref{eq:vh_divstab} we have 
	\begin{align}\nonumber
		\|\eta^{1/2}\bfv_h\|^2_{\Omega}+s(\bfv_{h},\bfv_{h})\lesssim \|\bfv_h\|^2_{\Omdh}+\|\dive\bfv_h\|^2_{\Omdh}\lesssim \|q_h\|^2_{\Omdh}.
	\end{align}
	\textbf{Term II}. 
	Inequality \eqref{eq:discrete_trace_ineq} yields
	\begin{align} \nonumber
		\|h^{1/2}\bfv_h\cdot \bfn\|_{\Sigma} \lesssim \|\bfv_h\|_{\Omdh} \lesssim \|\bfv_h\|_{\bfH^{\dive}(\Omdh)} \lesssim \|q_h\|_{\Omdh}. \nonumber
	\end{align}
	Combining the estimates for \textbf{Term I-II} 
	we get the desired estimate,  
	\begin{align*}
		\tn \bfv_{h} \tn_h &\lesssim  \| q_{h} \|_{\Omdh}. 
	\end{align*}
	
\end{proof}

We need the following lemma before we are ready to prove the stability of the method.
\begin{lemma}\label{lem:dim_count}
	For every $q_h\in \QkZ$ 
	and $\chi_h\in \QksZ$, 
	there exists a function $\bfv_h\in\VkZ$ 
	such that Lemma \ref{lem:B_infsup} is satisfied and 
	\begin{align}
		c(\bfv_h,\chi_h) &= \lambda, \quad \lambda\in\RR. \label{eq:A2}
	\end{align}
\end{lemma}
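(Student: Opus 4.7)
My plan is to decompose the desired function as $\bfv_h = \bfw_h + \bfz_h$, where $\bfw_h \in \VkZ$ is the field furnished by Lemma~\ref{lem:B_infsup} for the given $q_h$, and $\bfz_h \in \VkZ$ is a correction that is pointwise divergence-free on $\Omdh$. Because $\dive\bfz_h \equiv 0$, both terms in $\Bf(\bfz_h, q_h) = -(\dive\bfz_h, q_h)_{\Omega} - s_k(\dive\bfz_h, q_h)$ vanish, so $\Bf(\bfv_h, q_h) = \Bf(\bfw_h, q_h) \gtrsim \|q_h\|^2_{\Omdh}$ and the inf-sup inequality from Lemma~\ref{lem:B_infsup} is inherited. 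The sole role of $\bfz_h$ is then to adjust $c(\bfv_h, \chi_h)$ to the prescribed real value $\lambda$, with the norm bound from Lemma~\ref{lem:B_infsup} augmented by a contribution controlled by $\lambda$ and the corrector.

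The crux is to produce a pointwise divergence-free $\bfz_h^0 \in \VkZ$ for which $c(\bfz_h^0, \chi_h) \neq 0$ whenever $\chi_h$ is nontrivial. Under the standing assumption of the section that $\Sigma \cap T$ lies in a hyperplane for every $T \in \mcT_\Sigma$, the normal trace of an $\RT_0$ field on $\Sigma \cap T$ is an affine polynomial, so $c(\bfz_h, \chi_h) = (\bfz_h\cdot\bfn, \chi_h)_{\Sigma}$ is a linear functional in the Raviart--Thomas degrees of freedom of $\bfz_h$ carried by the elements in $\mcT_\Sigma$. I would then carry out a dimension count: the pointwise divergence-free requirement contributes only one scalar constraint per element, while the RT flux degrees of freedom on the boundary band outnumber these constraints, leaving enough freedom to realize any desired nonzero value of $c(\cdot, \chi_h)$. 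This surjectivity of $\bfz_h \mapsto c(\bfz_h, \chi_h)$ on the divergence-free subspace of $\VkZ$ is the dimension-counting result that gives the lemma its name.

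Given such a corrector, normalized so that $c(\bfz_h^0, \chi_h) = 1$, I would set
\begin{align*}
    \bfv_h := \bfw_h + \bigl(\lambda - c(\bfw_h, \chi_h)\bigr)\,\bfz_h^0.
\end{align*}
Then $c(\bfv_h, \chi_h) = \lambda$ by construction, the inf-sup bound $\Bf(\bfv_h, q_h) \gtrsim \|q_h\|^2_{\Omdh}$ persists, and the total norm is controlled by $\tn\bfw_h\tn_h \lesssim \|q_h\|_{\Omdh}$ from Lemma~\ref{lem:B_infsup} together with $|\lambda - c(\bfw_h,\chi_h)|\,\tn\bfz_h^0\tn_h$ from the correction. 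The main obstacle is the explicit construction and norm control of $\bfz_h^0$: reconciling the pointwise $\dive = 0$ condition with the continuity of normal fluxes across faces built into $\VkZ$, while prescribing a nonzero value of $c(\cdot,\chi_h)$, is the delicate step, and it is precisely here that the hyperplane assumption on $\Sigma \cap T$ is indispensable, since without it the pairing with $\chi_h$ would no longer reduce to a clean finite-dimensional linear algebra problem on each cut element.
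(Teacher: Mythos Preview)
Your decomposition $\bfv_h=\bfw_h+\bfz_h$, with $\bfw_h$ the field from Lemma~\ref{lem:B_infsup} and $\bfz_h\in\VkZ$ pointwise divergence-free, is exactly the structure the paper uses. The genuine gap is in the corrector and the norm bound. ``Lemma~\ref{lem:B_infsup} is satisfied'' includes $\tn\bfv_h\tn_h\lesssim\|q_h\|_{\Omdh}$, but your construction gives
\[
\tn\bfv_h\tn_h\leq\tn\bfw_h\tn_h+\bigl|\lambda-c(\bfw_h,\chi_h)\bigr|\,\tn\bfz_h^0\tn_h,
\]
and the second term has no reason to be bounded by $\|q_h\|_{\Omdh}$: the normalization $c(\bfz_h^0,\chi_h)=1$ makes $\tn\bfz_h^0\tn_h$ depend on $\chi_h$, and even for $\lambda=0$ one would need a uniform inf-sup bound for $c$ on the divergence-free subspace of $\VkZ$ against $\QksZ$. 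A bare dimension count (surjectivity) cannot deliver this; what is needed is a quantitative lower bound on $c(\bfz_h,\chi_h)/\tn\bfz_h\tn_h$ independent of how $\Sigma$ cuts the mesh, and your proposal does not provide one.

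The paper sidesteps this by not targeting a single $\chi_h$ at all but killing the full normal trace on $\Sigma$. The key fact --- which your ``affine'' remark understates --- is that for $\bfv_h\in\RT_0(T)$ and $\Sigma\cap T$ lying in a hyperplane, $\bfv_h\cdot\bfn|_{\Sigma\cap T}$ is a \emph{constant} (since $\bfx\cdot\bfn$ is constant on a hyperplane). One therefore subtracts an elementwise constant vector $\pmb{r}_h\in[\QkZ]^d$ (so $\dive\pmb{r}_h=0$ automatically) with $\pmb{r}_h\cdot\bfn|_{\Sigma\cap T}=\tilde\bfv_h\cdot\bfn|_{\Sigma\cap T}$ on each cut element; the tangential components of $\pmb{r}_h$ remain free. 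Then $\bfv_h=\tilde\bfv_h-\pmb{r}_h$ has $\bfv_h\cdot\bfn\equiv0$ on all of $\Sigma$, hence $c(\bfv_h,\chi_h)=0$ for every $\chi_h$, and $\|\pmb{r}_h\|_{\Omdh}$ is controlled elementwise by $\|\tilde\bfv_h\|_{\Omdh}\lesssim\|q_h\|_{\Omdh}$ with no $\chi_h$-dependence whatsoever.
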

\begin{proof}	Without loss of generality we can take $\lambda=0$. 
  Let $\tilde \bfv_h\in \VkZ$ be the interpolant in the proof of Lemma~\ref{lem:B_infsup} attaining the inf-sup condition of $\Bf$. Note that for any $\bfv_h \in \VkZ$ (since $\Sigma\cap T$ is a linear segment or a subset of a hyperplane for every $T\in \mcT_\Sigma$) we have $\bfv_h \cdot \bfn |_{\Sigma \cap T} \in Q_0(\Sigma\cap T)$.
 Let $\bfv_h=\tilde \bfv_h-\pmb{r}_h$ with $\pmb{r}_h \in [\QkZ]^d$ such that $\pmb{r}_h\cdot\bfn|_{\Sigma \cap T} = \tilde \bfv_h\cdot\bfn|_{\Sigma \cap T}$ for each element $T\in\mesh$.
  Note that $\bfv_h \in \VkZ$, $\dive\bfv_h=\dive\tilde \bfv_h=-q_h$ (so $\Bf(\bfv_h,q_h)=\Bf(\tilde \bfv_h, q_h)$), and 
	\begin{align}
	c(\bfv_h,\chi_h) &= \int_\Sigma (\tilde \bfv_h - \pmb{r}_h)\cdot\bfn \chi_h=0. 
	\end{align}
  Furthermore,
\begin{equation}
        \tn\bfv_h\tn_h \lesssim \|\bfv_h\|_{\bfH^{\dive}(\Omdh)} \lesssim \|\tilde \bfv_h\|_{\bfH^{\dive}(\Omdh)}+ \|\pmb{r}_h\|_{\Omdh}. 
\end{equation}
From \eqref{eq:vh_divstab} we have $\| \tilde \bfv_h \|_{\bfH^{\dive}(\Omdh)} \lesssim  \|q_h\|_{\Omdh}$ and we can choose $\pmb{r}_h\cdot\bft|_T$ for each $T\in \mcT_\Sigma$ so that $\|\bfv_h\|_{\bfH^{\dive}(\Omdh)} \lesssim \|q_h\|_{\Omdh}$. 
\end{proof}

Define the system form $\Af:(\VkZ\times\QkZ\times\QksZ)\times (\VkZ\times\QkZ\times\QksZ)$ and the system norm by 
\begin{align}
	\Af(\bfu_h,p_h,\phi_h; \bfv_h,q_h,\chi_h) &:= \Mf(\bfu_h,\bfv_h) + \Bf(\bfv_h,p_h) + c(\bfv_h,\phi_h) + \Bf(\bfu_h,q_h) + c(\bfu_h,\chi_h) - s_c(\phi_h,\chi_h), \label{eq:systA}\\
	\tn (\bfv_h,q_h,\chi_h) \tn^2_h &:= \tn \bfv_h\tn^2_h + \|q_h\|^2_{\Omdh} + \| \chi_h \|^2_{h}. \label{eq:systnorm}
\end{align}
Notice that $\Af$ is symmetric:
\begin{align}
	\Af(\bfu_h,p_h,\phi_h; \bfv_h,q_h,\chi_h) = \Af(\bfv_h,q_h,\chi_h; \bfu_h,p_h,\phi_h).
\end{align}

\begin{thm}\textbf{(Inf-sup condition)}\label{thm:big_infsup}
	For any $(\bfw_h,\xi_h,\sigma_h)\in \VkZ\times\QkZ\times\QksZ$ there exists $(\bfv_h,q_h,\chi_h)\in \VkZ\times(\oplus_{T\in\mesh} Q_0(T))\times\QksZ$ such that 
	\begin{align}
		\Af(\bfw_h,\xi_h,\sigma_h; \bfv_h,q_h,\chi_h) &= \Mf(\bfw_h,\bfv_h) + \Bf(\bfw_h,q_h) + c(\bfw_h,\chi_h) + \Bf(\bfv_h,\xi_h) + c(\bfv_h,\sigma_h) - s_c(\chi_h,\sigma_h)\nonumber\\
		&\gtrsim \tn (\bfw_h,\xi_h,\sigma_h) \tn_h^2,\\
		\tn (\bfv_h,q_h,\chi_h) \tn_h &\lesssim \tn (\bfw_h,\xi_h,\sigma_h) \tn_h.
	\end{align}
	Moreover, if  $ \int_\Omega \dive\bfw_h= 0$ then $q_h \in \QkZ$. 
\end{thm}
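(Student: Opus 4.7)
The plan is to test the system form with a composite function
\[
(\bfv_h,q_h,\chi_h)=(\bfw_h+\eta_1\bfv_h^{(2)}+\eta_2\bfv_h^{(5)},\ -\xi_h-\eta_3\dive\bfw_h,\ -\sigma_h+\eta_4\chi_h^{(4)}),
\]
with small parameters $\eta_i>0$, and to recover each term of $\tn(\bfw_h,\xi_h,\sigma_h)\tn_h^2$ in turn. The ``diagonal'' piece $(\bfw_h,-\xi_h,-\sigma_h)$ cancels the couplings $\Bf(\bfw_h,\xi_h)$ and $c(\bfw_h,\sigma_h)$ by the symmetry of $\Af$, leaving
\[
\Af(\bfw_h,\xi_h,\sigma_h;\bfw_h,-\xi_h,-\sigma_h)=\Mf(\bfw_h,\bfw_h)+s_c(\sigma_h,\sigma_h)=\tn\bfw_h\tn_\Mf^2+s_c(\sigma_h,\sigma_h),
\]
which accounts for the $\Mf$-contribution to $\tn\bfw_h\tn_h^2$ and the $s_c$-contribution to $\|\sigma_h\|_h^2$.

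Three of the remaining contributions would be recovered by Fortin-type perturbations. Lemma~\ref{lem:dim_count} applied with $(\xi_h,\sigma_h,\lambda=0)$ yields $\bfv_h^{(2)}\in\VkZ$ with $\Bf(\bfv_h^{(2)},\xi_h)\gtrsim \|\xi_h\|^2_\Omdh$, $c(\bfv_h^{(2)},\sigma_h)=0$ and $\tn\bfv_h^{(2)}\tn_h\lesssim \|\xi_h\|_\Omdh$. Setting $q_h^{(3)}=-\dive\bfw_h\in\bigoplus_T Q_0(T)$ and using $s_b(\bfw_h,\dive\bfw_h)=s_k(\dive\bfw_h,\dive\bfw_h)$ together with Lemma~\ref{lem:sp_ineq} gives $\Bf(\bfw_h,q_h^{(3)})\gtrsim\|\dive\bfw_h\|^2_\Omdh$. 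For $\|h^{1/2}\bfw_h\cdot\bfn\|_\Sigma^2$ I would let $\chi_h^{(4)}\in\QksZ$ be defined by $\chi_h^{(4)}|_T = h(\bfw_h\cdot\bfn_T)|_T$; since $\bfw_h\cdot\bfn_T$ is affine on each $T\in\mcT_\Sigma$ for $\bfw_h\in\RT_0$, $\chi_h^{(4)}\in\QksZ=\bigoplus_T P_1(T)$, and $c(\bfw_h,\chi_h^{(4)})=\|h^{1/2}\bfw_h\cdot\bfn\|^2_\Sigma$ while $\|h^{-1}\chi_h^{(4)}\|_\Omsig\lesssim\|\bfw_h\|_\Omsig$, so by Lemma~\ref{lem:lagrange}, $\|\chi_h^{(4)}\|_h\lesssim \tn\bfw_h\tn_h$.

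The delicate part is capturing the remaining boundary contribution $\|h^{-1/2}\sigma_h\|_\Sigma^2$ of $\|\sigma_h\|_h^2$. For this I would build $\bfv_h^{(5)}\in\VkZ$ whose normal trace satisfies $\bfv_h^{(5)}\cdot\bfn|_{\Sigma\cap T} = h^{-1}\bar\sigma_h^{(T)}$, where $\bar\sigma_h^{(T)}$ denotes the mean of $\sigma_h$ on $\Sigma\cap T$. Such a $\bfv_h^{(5)}$ exists because, under the hyperplane assumption, $\bfv\cdot\bfn|_{\Sigma\cap T}$ is a single constant for any $\bfv\in\RT_0(T)$, and the local $\RT_0$ face fluxes can be set to prescribe this constant (taking $\bfv_h^{(5)}$ supported in a neighborhood of $\Sigma$ and divergence-free on each $T\in\mcT_\Sigma$). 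Scaling with $|T|\sim h^d$, $|\Sigma\cap T|\lesssim h^{d-1}$ then gives $\tn\bfv_h^{(5)}\tn_h\lesssim \|h^{-1/2}\sigma_h\|_\Sigma$ and $c(\bfv_h^{(5)},\sigma_h)=\|h^{-1/2}\sigma_h^0\|^2_\Sigma$, where $\sigma_h^0$ is the piecewise constant $L^2(\Sigma)$-projection of $\sigma_h|_\Sigma$. The remainder $\tilde\sigma_h=\sigma_h-\sigma_h^0$ is controlled by a local Poincar\'e estimate on $\Sigma\cap T$ giving $\|\tilde\sigma_h\|^2_{\Sigma\cap T}\lesssim |\Sigma\cap T|^3 |\nabla\sigma_h|_T|^2/|T|$, combined with an inverse inequality and Lemma~\ref{lem:lagrange}. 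I expect the main technical work to lie here: both the explicit assembly of $\bfv_h^{(5)}$ in $\VkZ$ and the closure of the absorption chain so that the tangential variation of $\sigma_h|_\Sigma$ is eventually swallowed by $s_c(\sigma_h,\sigma_h)$ and the already-extracted quantities.

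With all pieces in place I would expand $\Af(\bfw_h,\xi_h,\sigma_h;\bfv_h,q_h,\chi_h)$, bound each cross term ($\Mf(\bfw_h,\bfv_h^{(j)})$, $\Bf(\bfv_h^{(j)},\xi_h)$, $c(\bfv_h^{(j)},\sigma_h)$, $s_c(\sigma_h,\chi_h^{(4)})$, $\Bf(\bfw_h,q_h^{(3)})$, \dots) via Cauchy--Schwarz and Young's inequality, and absorb the errors into the diagonal quantities $\tn\bfw_h\tn_\Mf^2,\ s_c(\sigma_h,\sigma_h)$ and the quantities extracted from the other perturbations. Choosing the $\eta_i$ sufficiently small yields strictly positive coefficients in front of every term of $\tn(\bfw_h,\xi_h,\sigma_h)\tn_h^2$, giving the claimed lower bound on $\Af$. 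The continuity estimate $\tn(\bfv_h,q_h,\chi_h)\tn_h\lesssim \tn(\bfw_h,\xi_h,\sigma_h)\tn_h$ follows by the triangle inequality from the norm bounds on $\bfv_h^{(2)}$, $\bfv_h^{(5)}$, $q_h^{(3)}$ and $\chi_h^{(4)}$ established above. Finally, since $-\xi_h\in\QkZ$ already has zero mean and the extra piece $-\eta_3\dive\bfw_h$ has mean $-\eta_3\int_\Omega\dive\bfw_h$, $q_h$ lies in $\QkZ$ exactly when $\int_\Omega\dive\bfw_h=0$, which proves the \emph{moreover} statement.
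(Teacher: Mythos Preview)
Your Fortin decomposition is sound in spirit but considerably more elaborate than the paper's, and two of your five building blocks are redundant. The perturbation $\chi_h^{(4)}$ can be dropped entirely: by the discrete trace inequality~\eqref{eq:discrete_trace_ineq} together with~\eqref{eq:sa_stabbound} one already has $\|h^{1/2}\bfw_h\cdot\bfn\|_\Sigma^2\lesssim\|\bfw_h\|_{\Omdh}^2\lesssim\tn\bfw_h\tn_\Mf^2$, so the boundary contribution to $\tn\bfw_h\tn_h^2$ is absorbed by the diagonal test; the paper uses this implicitly in~\eqref{eq:u_term} when it writes $\tn\bfw_h\tn_\Mf^2+\|\dive\bfw_h\|_{\Omdh}^2\gtrsim\tn\bfw_h\tn_h^2$ (your $q_h^{(3)}=-\dive\bfw_h$ piece is folded into the same first test). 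More significantly, the paper does not separate $\bfv_h^{(2)}$ and $\bfv_h^{(5)}$. It invokes Lemma~\ref{lem:dim_count} once, with the \emph{nonzero} target $\lambda=h^{-1}\|\sigma_h\|_\Sigma^2$ rather than $\lambda=0$, obtaining a single Fortin velocity $\bfrho_h$ with $\dive\bfrho_h=-\xi_h$ and $c(\bfrho_h,\sigma_h)=h^{-1}\|\sigma_h\|_\Sigma^2$ simultaneously. Testing with $(\bfrho_h,0,0)$ then delivers both $\|\xi_h\|_{\Omdh}^2$ and $\|h^{-1/2}\sigma_h\|_\Sigma^2$ in one stroke, and the final choice is simply $(\bfv_h,q_h,\chi_h)=(\bfw_h+\bfrho_h,\,-\xi_h-\dive\bfw_h,\,-\sigma_h)$.

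Your approach is not wrong, but the closure argument you sketch for the ``delicate part'' has a gap. The chain you propose for the oscillation $\tilde\sigma_h=\sigma_h-\sigma_h^0$ --- Poincar\'e on $\Sigma\cap T$, then an inverse estimate, then Lemma~\ref{lem:lagrange} --- is circular: it terminates in $h^{-2}\|\sigma_h\|_{\Omsig}^2$, and Lemma~\ref{lem:lagrange} bounds this only by $\|\sigma_h\|_h^2=\|h^{-1/2}\sigma_h\|_\Sigma^2+s_c(\sigma_h,\sigma_h)$, which is precisely the quantity you are trying to control. The stabilization $s_c$ contains only \emph{normal} derivatives on $\Sigma$ and \emph{jumps} of full derivatives across faces, so it does not directly dominate the tangential variation $\|\nabla\sigma_h\cdot\bft\|_{\Omsig}$ that your Poincar\'e step produces. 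In addition, the bound $\tn\bfv_h^{(5)}\tn_h\lesssim\|h^{-1/2}\sigma_h\|_\Sigma$ you claim fails on small-cut elements, since $\|\bfv_h^{(5)}\|_T^2\sim h^{d-2}|\bar\sigma_h^{(T)}|^2$ while $h^{-1}\|\sigma_h\|_{\Sigma\cap T}^2\sim h^{-1}|\Sigma\cap T|\,|\bar\sigma_h^{(T)}|^2$ can be arbitrarily small. The paper bypasses both difficulties by hiding the construction of the divergence-free correction inside Lemma~\ref{lem:dim_count}, where it is built as a modification of the already-stable lift $\tilde\bfv_h$ from Lemma~\ref{lem:B_infsup} rather than assembled from scratch on $\mcT_\Sigma$.
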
  
\begin{proof}
Note that  $\Bf(\bfw_h, -\dive\bfw_h)=\|\dive \bfw_h\|^2_{\Omega_1 \cup \Omega_2}+ s(\dive \bfw_h, \dive \bfw_h) \gtrsim \|\dive \bfw_h\|^2_{h}$, in the last inequality we used Lemma \ref{lem:sp_ineq}. Thus, 
	\begin{align}
		\Af(\bfw_h,\xi_h,\sigma_h;\bfw_h,-\xi_h-\dive\bfw_h,-\sigma_h) &= \Mf(\bfw_h,\bfw_h)+\Bf(\bfw_h,-\xi_h-\dive\bfw_h)+\Bf(\bfw_h,\xi_h)+s_c(\sigma_h,\sigma_h) \nonumber\\
		&=\Mf(\bfw_h,\bfw_h)+\Bf(\bfw_h,-\dive\bfw_h)+s_c(\sigma_h,\sigma_h)  \nonumber \\
		&\gtrsim \tn\bfw_h\tn^2_{\Mf} +  \|\dive \bfw_h\|^2_h +s_c(\sigma_h,\sigma_h)
		= \tn\bfw_h\tn^2_h+s_c(\sigma_h,\sigma_h).\label{eq:u_term}
	\end{align}
	By virtue of Lemma \ref{lem:dim_count} we can pick a function $\bfrho_h\in \VkZ$ satisfying
	\begin{align}
           \Bf(\bfrho_h,\xi_h)\gtrsim \|\xi_h\|^2_{\Omdh}, \  \tn\bfrho_h\tn_h \lesssim \|\xi_h\|_{\Omdh}, \dive\bfrho_h=-\xi_h, \
           c(\bfrho_h,\sigma_h)=h^{-1}\|\sigma_h\|^2_{\Sigma}\in \RR.
       	\end{align}
	By continuity of $\Mf$ and Young's inequality it now follows
	\begin{align}
		\Af(\bfw_h,\xi_h,\sigma_h;\bfrho_h,0,0) &= \Mf(\bfw_h,\bfrho_h) + \Bf(\bfrho_h,\xi_h) + c(\bfrho_h,\sigma_h) \nonumber\\
		&\gtrsim -\tn\bfrho_h\tn_h\tn\bfw_h\tn_h + \|\xi_h\|^2_{\Omdh} + c(\bfrho_h,\sigma_h) \nonumber\\
		&\gtrsim-\|\xi_h\|_{\Omdh}\tn\bfw_h\tn_h + \|\xi_h\|^2_{\Omdh} + c(\bfrho_h,\sigma_h) \nonumber\\
		&\gtrsim -1/2\tn\bfw_h\tn^2_h + 1/2\|\xi_h\|^2_{\Omdh} + h^{-1}\|\sigma_h\|^2_{\Sigma}. \label{eq:lam_term}
	\end{align}
	We pick $(\bfv_h,q_h,\chi_h)=(\bfw_h+\bfrho_h,-\xi_h-\dive\bfw_h,-\sigma_h)$ and use Lemma \ref{lem:lagrange}, hence we have 
	\begin{align}
		\Af(\bfw_h,\xi_h,\sigma_h; \bfv_h,q_h,\chi_h)
          &\gtrsim 1/2 \tn\bfw_h\tn^2_h + 1/2\|\xi_h\|^2_{\Omdh} +h^{-1}\|\sigma_h\|^2_{\Sigma}+s_c(\sigma_h,\sigma_h)
           \gtrsim \tn (\bfw_h,\xi_h,\sigma_h) \tn^2_h.
	\end{align}
	By design, the following inequality holds
	\begin{align}
		\tn \bfrho_h\tn_h&\lesssim \|\xi_h\|_{\Omdh}, 
	\end{align}
	and consequently
	\begin{align}
		\tn (\bfv_h,q_h,\chi_h) \tn_h \lesssim \tn \bfw_h+\bfrho_h\tn_h+ \|\dive\bfw_h+\xi_h\|_{\Omdh} + \|\sigma_h\|_{h}
		\lesssim \tn (\bfw_h,\xi_h,\sigma_h) \tn_h.
	\end{align}
	Finally, since $q_h=-\xi_h-\dive\bfw_h$ and $\xi_h\in \QkZ$ we have $q_h\in \QkZ$ if $ \int_\Omega \dive\bfw_h= 0$.
\end{proof}

\subsection{Interpolation estimates and consistency}
To define the interpolant we need extensions of functions in $\Omega$ to $\Omdh$. We will use the Sobolev-Stein extension operators of \cite[(3.16) and Corollary 4.1]{Hiptmair2012Universal}:
\begin{align}
	\bfE: \bfH^{1,\dive}(\Omega)\to \bfH^{1,\dive}(\RR^d),\
        E: H^{1}(\Omega)\to H^{1}(\RR^d), 
\end{align}
with $k\geq 0$, which for $\bfv\in\bfH^{1,\dive}(\Omega)$ and $q\in H^{1}(\Omega)$ satisfy
\begin{align}
\|\bfE\bfv\|_{\bfH^{1,\dive}(\RR^d)} &\lesssim \|\bfv\|_{\bfH^{1,\dive}(\Omega)},
\quad (\bfE \bfv)|_{\Omega}=\bfv \text{ a.e.},
\label{eq:sobo_velbound}\\
\|E  q\|_{H^{1}(\RR^d)} &\lesssim \|q\|_{H^{1}(\Omega)}, \quad
 (E q)|_{\Omega}=q \text{ a.e.}, \label{eq:sobo_presbound}\\
\dive\circ \bfE &= E \circ \dive.\label{eq:commdiagE}
\end{align} 

The last property \eqref{eq:commdiagE} allow us to define the following interpolation and projection operators
\begin{align}\label{eq:defintpv}
	\interpv &:  \bfH^{1,\dive} (\Omega) \ni \bfv \mapsto \interpv (\bfE\bfv|_{\Omdh}) \in \VkZ, \\
	\label{eq:defintpp}
	\projp &:  H^{1}(\Omega) \ni q \mapsto \projp (Eq|_{\Omdh})\in \QkZ,
\end{align}
which satisfy $\dive \interpv \bfv = \projp \dive \bfv$ for all $\bfv \in \bfH^{1,\dive}(\Omega)$.

For the continuous Lagrange multiplier variable representing the velocity normal component on $\Sigma$ we introduce the extension operator 
\begin{align}
  \Es : H^{2}(\Sigma) \to H^{2}(\RR^d)
\end{align}
Then we can also define the following $L^2$-projection operator 
 \begin{align}
 	\projp^{\Sigma} &:  H^{2}(\Sigma) \ni \phi \mapsto \projp^{\Sigma} (\Es \phi|_{\Omsig})\in \QksZ,
 \end{align}
 and we note that from \cite[Lemma 3.1]{Reu15} we also have 
 \begin{align}
 	\| \Es \phi \|_{H^s(U_\delta(\Sigma))} \lesssim \delta^{1/2}\| \phi \|_{H^s(\Sigma)}, \quad s\geq 0.\label{eq:stabEs}
 \end{align}

\begin{lemma}\textbf{(Interpolation/projection estimates)}\label{lem:interp}
	Let $\bfu\in  \bfH^{1,\dive}(\Omega)$, $p\in  H^{1}(\Omega)$, and $\phi \in H^2(\Sigma)$, then
        \begin{align}
	 \tn \bfE \bfu-\interpv \bfu \tn_h &\lesssim h \left(\|\bfu\|_{\bfH^{1}(\Omega)} + \|\dive\bfu\|_{H^{1}(\Omega)} \right),\label{eq:interpresu} \\
         \| Ep-\projp p \|_{\Omdh} &\lesssim h  \|p\|_{H^{1}(\Omega)},\label{eq:interpresp}\\
\|\Es \phi-\projp \phi \|_h &\lesssim h^{3/2} \| \phi\|_{H^2(\Sigma)}
  \label{eq:interpreslag}                             
        \end{align}
\end{lemma}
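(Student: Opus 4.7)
My plan is to prove each of the three estimates separately, in each case decomposing the relevant error norm into its constituent pieces and reducing to standard elementwise polynomial approximation bounds applied to the Sobolev--Stein extension of the target function. The commuting identity \eqref{eq:commdiagE} together with the commuting diagram \eqref{eq:commuting_diagram} is the bridge between the velocity interpolation error and the divergence projection error, while the extension estimate \eqref{eq:stabEs} supplies the crucial $h^{1/2}$ factor for the multiplier bound since $\Omsig$ is a band of width $O(h)$ around $\Sigma$.

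For \eqref{eq:interpresu}, I would unfold $\tn \bfE\bfu - \interpv\bfu \tn_h^2$ into the four pieces defining it. The bulk $L^2$-term follows from the standard Raviart--Thomas interpolation bound $\|\bfE\bfu - \interpv(\bfE\bfu|_{\Omdh})\|_T \lesssim h\|\bfE\bfu\|_{\bfH^1(T)}$ summed over $T\in\mesh$, combined with \eqref{eq:sobo_velbound}. For the divergence term, the commuting identity yields $\dive \interpv\bfu = \projp E(\dive\bfu)$, reducing the error to the $L^2$-projection error of $E(\dive\bfu) \in H^1(\Omdh)$, which is $O(h)\|\dive\bfu\|_{H^1(\Omega)}$ after applying \eqref{eq:sobo_presbound}. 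The boundary trace term is handled by the trace inequality \eqref{eq:trace_ineq} on each cut element followed by the RT interpolation bound. For the ghost penalty $s$, split the sum into the $j=0$ value-jump and $j=1$ first-derivative-jump terms; both are controlled by elementwise trace inequalities together with RT approximation estimates and $\bfH^1$-stability of the interpolant, ultimately giving $s(\bfE\bfu - \interpv\bfu, \bfE\bfu - \interpv\bfu) \lesssim h^2 \|\bfE\bfu\|_{\bfH^1(\Omdh)}^2$.

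Estimate \eqref{eq:interpresp} is the most elementary: the standard elementwise $L^2$-projection approximation $\|Ep - \projp Ep\|_T \lesssim h\|Ep\|_{H^1(T)}$, summed over $T \in \mesh$ and combined with \eqref{eq:sobo_presbound}, yields the result immediately.

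The multiplier estimate \eqref{eq:interpreslag} is the most intricate and splits naturally into the boundary piece $\|h^{-1/2}(\Es\phi - \projps\phi)\|_\Sigma$ and the $s_c$ piece. For the boundary piece, I would apply \eqref{eq:trace_ineq} on each cut element to transfer the norm from $\Sigma$ into $\Omsig$, obtaining $\|\Es\phi - \projps\phi\|_\Sigma^2 \lesssim h^{-1}\|\Es\phi - \projps\phi\|_{\Omsig}^2 + h\|\nabla(\Es\phi - \projps\phi)\|_{\Omsig}^2$, and then use the standard $L^2$-projection estimates on $\QksZ$ together with \eqref{eq:stabEs} to obtain the desired $h^3\|\phi\|_{H^2(\Sigma)}^2$. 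For $s_c$, the key structural observation is that $\Es\phi \in H^2(\Omsig)$ has vanishing jumps of both value and gradient across interior faces, so the face-jump contributions to $s_c(\Es\phi - \projps\phi, \Es\phi - \projps\phi)$ reduce to bounding jumps of $\projps\phi - \Es\phi$ via elementwise trace plus $L^2$-projection estimates; the remaining boundary normal-derivative term is handled by a trace inequality on each cut element together with the bound $\|D^2\Es\phi\|_{\Omsig}^2 \lesssim h\|\phi\|_{H^2(\Sigma)}^2$ from \eqref{eq:stabEs}. The main obstacle will be the careful bookkeeping of the $h$-powers in the $s_c$ term so that the $h^{1/2}$ factor from the thin-strip extension estimate propagates correctly through each of the face-jump and boundary-derivative contributions to yield precisely $h^{3/2}$ in the final bound.
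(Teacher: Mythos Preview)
Your proposal is correct and follows essentially the same route as the paper: decompose each norm into its constituent pieces, apply elementwise approximation estimates (RT interpolation for the velocity, $L^2$-projection for pressure and multiplier), bound the face and boundary contributions via trace inequalities, and finish with the stability of the extensions \eqref{eq:sobo_velbound}--\eqref{eq:commdiagE} and the thin-strip bound \eqref{eq:stabEs}. The only cosmetic difference is that for \eqref{eq:interpreslag} the paper bounds the entire $\|\cdot\|_h$-norm at once by $\sum_{T\in\mcT_\Sigma}\bigl(h^{-1}\|\Es\phi-\projps\phi\|_T+|\Es\phi-\projps\phi|_{1,T}\bigr)$ and then invokes the second-order $L^2$-projection estimate on $\QksZ$, whereas you treat the $\Sigma$-term and the individual pieces of $s_c$ separately; the underlying estimates and the $h$-bookkeeping are identical.
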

\begin{proof}
Let $\bfrho= \bfE \bfu-\interpv \bfu$. Recall that 
	\begin{align*}
		\tn \bfrho \tn_h^2 &= 
		\|\permi^{1/2}\bfrho \|^2_{\Omega_1\cup\Omega_2}  + \|\dive\bfrho \|^2_{\Omdh} +
		s(\bfrho, \bfrho)	+
		\|h^{1/2}\bfrho\cdot\bfn\|^2_{\Sigma}.
	\end{align*}
Using local interpolation estimates from Lemma \ref{lem:loc_interp} and the properties of the extension operators, \eqref{eq:sobo_velbound}-\eqref{eq:commdiagE}, we have
\begin{align}
	\|\permi^{1/2}\bfrho\|^2_{\Omega}& \lesssim \sum_{T\in \mesh}\|\bfrho\|^2_{T} \lesssim h^{2} \sum_{T\in \mesh} |\bfE \bfu |^2_{1,T} \lesssim h^{2} \| \bfu \|^2_{\bfH^{1}(\Omega)}, \\
	\|\dive \bfrho \|^2_{\Omdh} &= \sum_{T\in \mesh} \|\dive \bfrho \|^2_{T} \lesssim h^{2} \sum_{T\in \mesh} |\dive \bfE\bfu |^2_{1,T} 
	=h^{2} \sum_{T\in \mesh} |E \dive \bfu|_{1,T}^2 \\
	&\lesssim h^{2} \| \dive\bfu \|^2_{H^{1}(\Omega)}. 
\end{align}

Next we look at the stabilization term. We use an element-wise trace inequality, followed by local interpolation estimates (Lemma \ref{lem:loc_interp}), and the stability of the extension operator to get
\begin{align}
	s(\bfrho, \bfrho)&=  \sum_{F \in \mcFs}
	\tau (h \|\jump{\bfrho }\|^2_{F} + h^{3}  \|\jump{D_{\bfn_F} \bfrho }\|^2_{F} ) \lesssim   \sum_{T \in \mcT_{\Sigma}} (\|\bfrho \|^2_{T} + h^{2}  |\bfrho |^2_{1,T} ) \nonumber \\
	&\lesssim h^{2}  \sum_{T\in \mesh} | \bfE \bfu |^2_{1,T} \lesssim h^{2}  \| \bfu \|^2_{\bfH^{1}(\Omega)}. \label{eq:interpboundII}
\end{align}
For the boundary term we again use the trace inequality \label{eq:trace_ineq}, followed by local interpolation estimates, and the stability of the extension operator:
\begin{align}
	\|h^{1/2}\bfrho \cdot \pmb{n}\|^2_{\Sigma} & 
	= \sum_{T\in\mesh}\|h^{1/2}\bfrho \cdot \pmb{n}\|^2_{T\cap \Sigma}  \lesssim \sum_{T\in\mesh} \|\bfrho\|_T^2+h^2|\bfrho|_{1,T}^2\lesssim h^2\sum_{T\in\mesh}| \bfE \bfu |^2_{1,T}
	\lesssim h^2\|\bfu\|_{\bfH^{1}(\Omega)}^2.
\end{align} 
Summing all terms together we get the desired estimate \eqref{eq:interpresu}. Since $\|\cdot \|_{\Omdh}$ is just the broken $L^2$-norm on the active meshes, the estimate \eqref{eq:interpresp} follows from standard estimates and using the stability \eqref{eq:sobo_presbound} of the extension operator $E$. 

For the last estimate we use the trace inequality, standard estimates for the $L^2$-projection, and the stability of the extension operator $\Es$, \eqref{eq:stabEs} with $\delta \sim h$ to get 
\begin{align}
  \|\Es \phi-\projp \phi \|_h&=
  h^{-1/2} \|\Es \phi-\projp \phi \|_\Sigma + s_c(\Es \phi-\projps \phi,\Es \phi-\projps \phi)^{1/2} \\
  &\lesssim
          \sum_{T\in\Omega_\Sigma}h^{-1} \|\Es \phi-\projp \phi \|_T+\|(\Es \phi-\projp \phi) \|_{1,T}
          \lesssim
          \sum_{T\in\Omega_\Sigma} h \|\Es \phi \|_{2,T} \lesssim h^{3/2} \|\phi \|_{H^2(\Sigma)}.\nonumber 
	\end{align}
\end{proof}

\begin{lemma}(\textbf{Consistency})\label{lem:consistency}
	Let $(\bfu,p,\phi) \in (\bfH^{\dive}_{u_B}(\Omega) \cap  \bfH^{1,\dive}(\Omega))\times H^{1}(\Omega)/\RR \times H^{2}(\Sigma)$ be the solution to \eqref{eq:weakdarcy}.
	Let $(\bfu_h,p_h,\phi_h)\in\VkZ\times\QkZ \times \QksZ$ be the solution to \eqref{eq:discretedarcy2}. Then,
	\begin{align}
		\Mf(\bfu_h-\bfE \bfu,\bfv_h) + \Bf(\bfv_h, p_h-E p) + c(\bfv_h,\phi_h-\Es \phi) &=-s(\bfE \bfu,\bfv_h),
			\quad &\forall \bfv_h \in \VkZ,\\
		\Bf(\bfu_h-\bfE \bfu,q_h) &= 0, \quad &\forall q_h \in \QkZ,\\
		c(\bfu_h-\bfE\bfu,\chi_h)-s_c(\phi_h-\Es \phi,\chi_h) &= 0, \quad &\forall \chi_h\in\QksZ,
	\end{align}
	with
	\begin{align}\label{eq:suforEu}
	s(\bfE \bfu,\bfv_h)=   \sum_{F \in \mathcal{F}_{h,i}}
	\tau h^{3}  (\jump{D_{\bfn_F} \bfE \bfu }, \jump{D_{\bfn_F} \bfv_{h}})_{F}.
	\end{align}
\end{lemma}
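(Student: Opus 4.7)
My approach is Galerkin-orthogonality: I test the discrete equations in \eqref{eq:discretedarcy2} with $(\bfv_h,q_h,\chi_h)\in\VkZ\times\QkZ\times\QksZ$ and subtract the contributions obtained by plugging the extended exact solution $(\bfE\bfu,Ep,\Es\phi)$ into the same bilinear forms, then identify every residual term using the original strong Darcy problem~\eqref{eqs:strongDarcy} and the properties of the extension operators listed in~\eqref{eq:sobo_velbound}--\eqref{eq:commdiagE}. Since the statement is linear in $(\bfu-\bfu_h,p-p_h,\phi-\phi_h)$, the three equations can be proved independently.

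\textbf{The $\Bf$-equation.} I rewrite $\Bf(\bfu_h-\bfE\bfu,q_h)=-(g,q_h)_\Omega-\Bf(\bfE\bfu,q_h)$ using the second discrete equation, so it suffices to show $\Bf(\bfE\bfu,q_h)=-(g,q_h)_\Omega$. Expanding, $\Bf(\bfE\bfu,q_h)=-(\dive\bfE\bfu,q_h)_\Omega - s_b(\bfE\bfu,q_h)$. The commuting property~\eqref{eq:commdiagE} gives $\dive\bfE\bfu=E(\dive\bfu)=Eg$, which restricted to $\Omega$ equals $g$; and because $\dive\bfE\bfu\in H^1(\RR^d)$ the jumps $\jump{D_{\bfn_F}^j\dive\bfE\bfu}$ of order $j=0$ vanish (the only values of $j$ appearing for $k=0$), hence $s_b(\bfE\bfu,q_h)=0$.

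\textbf{The $c$-equation.} Subtracting the third discrete equation, I need $c(\bfE\bfu,\chi_h)-s_c(\Es\phi,\chi_h)=(u_B,\chi_h)_\Sigma$. The first piece is immediate: $c(\bfE\bfu,\chi_h)=(\bfE\bfu\cdot\bfn,\chi_h)_\Sigma=(\bfu\cdot\bfn,\chi_h)_\Sigma=(u_B,\chi_h)_\Sigma$ because $\bfE\bfu$ coincides with $\bfu$ a.e.\ in $\Omega$ and $\bfu\cdot\bfn|_\Sigma=u_B$. For $s_c(\Es\phi,\chi_h)$, the face jump terms in~\eqref{eq:full stab-c} vanish since $\Es\phi\in H^2(\RR^d)$ kills $\jump{D^j\Es\phi}$ for $j=0,1$. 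The only delicate term is the boundary normal-derivative contribution $\tau_c h(D_{\bfn}\Es\phi,D_{\bfn}\chi_h)_\Sigma$; this vanishes because $\Es$ is the constant-in-normal-direction extension of~\cite{Reu15}, so by construction $D_{\bfn}\Es\phi|_\Sigma=0$.

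\textbf{The $\Mf$-equation (main obstacle).} Testing the first discrete equation gives $\Mf(\bfu_h-\bfE\bfu,\bfv_h)+\Bf(\bfv_h,p_h-Ep)+c(\bfv_h,\phi_h-\Es\phi)=(\pmb{f},\bfv_h)_\Omega-\Mf(\bfE\bfu,\bfv_h)-\Bf(\bfv_h,Ep)-c(\bfv_h,\Es\phi)$. I need the right-hand side to equal $-s(\bfE\bfu,\bfv_h)$. Since $\bfE\bfu\in\bfH^1(\Omdh)$ and $Ep\in H^1(\Omdh)$, the face jump terms of order $j=0$ vanish in $s(\bfE\bfu,\bfv_h)$ and $s_b(\bfv_h,Ep)=0$, leaving only $s(\bfE\bfu,\bfv_h)$ as written in~\eqref{eq:suforEu}. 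The remaining algebraic identity reduces to $m(\bfu,\bfv_h)-(\dive\bfv_h,p)_\Omega+(\bfv_h\cdot\bfn,\phi)_\Sigma=(\pmb{f},\bfv_h)_\Omega$. Here is the crux: the Lagrange multiplier $\phi$ in the continuous problem~\eqref{eq:weakdarcy} can be identified with the boundary trace $p|_\Sigma$, which is the content of the (unrestricted) Green identity obtained by multiplying \eqref{eq:pressure} by $\bfv_h$ and integrating by parts, producing the boundary term $(p,\bfv_h\cdot\bfn)_\Sigma$. Thus with $\phi=p|_\Sigma$ the identity holds for every $\bfv_h\in\VkZ\subset\bfH^{\dive}(\Omega)$, and the claim follows. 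The slight care required is in handling this identification of $\phi$, using the additional regularity $\phi\in H^2(\Sigma)$ assumed in the lemma to guarantee all boundary integrals are well defined.
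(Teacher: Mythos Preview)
Your proposal is correct and follows the same Galerkin-orthogonality argument as the paper: subtract the extended exact solution from the discrete equations and observe that the stabilization contributions $s_b(\bfE\bfu,q_h)$, $s_b(\bfv_h,Ep)$, $s_c(\Es\phi,\chi_h)$ and the $j=0$ term of $s(\bfE\bfu,\bfv_h)$ vanish by regularity of the extensions, leaving only the residual~\eqref{eq:suforEu}. You are in fact more explicit than the paper on two points it glosses over---the identification $\phi=p|_\Sigma$ via integration by parts (needed because $\bfv_h\in\VkZ$ does not lie in the continuous test space $\bfH^{\dive}_0(\Omega)$), and the vanishing of the normal-derivative term in $s_c(\Es\phi,\chi_h)$, which indeed requires the constant-in-normal-direction structure of $\Es$ and not merely $H^2$-regularity.
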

\begin{proof} 
  Note that since $ Ep \in H^{1}(\RR^d)$ and $\bfE \bfu \in \bfH^{1,\dive}(\RR^d)$ we have $\jump{  Ep}=0$, $\jump{\dive (\bfE \bfu)}=0$, and $\jump{ \bfE \bfu}=0$. 
  Hence $s_b(\bfv_h,Ep)=s_b(\bfE \bfu, q_h)=0$ and consequently $\Bf(\bfE \bfu, q_h)=b(\bfu, q_h)$ and $\Bf(\bfv_h, E p)=b(\bfv_h, p)$. 
  However, $s(\bfE \bfu,\bfv_h)\neq 0$ and instead equation \eqref{eq:suforEu} holds. 
  Moreover $s_c(\Es \phi,\chi_h)=0$ since $\Es \phi\in H^{2}(\RR^d)$. 
  The continuous and discrete variables are solutions to their respective equations, and by using this we get 
	\begin{align}
	\Mf(\bfu_h-\bfE \bfu,\bfv_h) + \Bf(\bfv_h, p_h-E p)+c(\bfv_h,\phi_h-\Es \phi) &= -\sum_{F \in \mathcal{F}_{h,i}}
                                                                                        \tau h^{3} ([D_{\bfn_F} \bfE \bfu ], [D_{\bfn_F} \bfv_{h}])_{F},  	\quad &\forall \bfv_h \in \VkZ,\nonumber \\
          \Bf(\bfu_h-\bfE \bfu,q_h) &= 0,   \quad &\forall q_h \in \QkZ,\nonumber \\
	c(\bfu_h-\bfE\bfu,\chi_h)-s_c(\phi_h-\Es \phi,\chi_h) &= 0,  \quad &\forall \chi_h\in\QksZ,\nonumber
	\end{align}
	and the result follows. 
\end{proof}

\subsection{A priori estimate}\label{sec:apriori}
We now prove theoretical estimates for the convergence order of the proposed method. We must assume more regularity than $L^2(\Omega)$ for the divergence of the solution in order to get optimal a priori estimates.

\begin{thm}\textbf{(A priori error estimate)}\label{thm:apriori}
	Let $(\bfu,p,\phi) \in (\bfV \cap  \bfH^{1,\dive}(\Omega))\times H^{1}(\Omega) \times H^{2}(\Sigma)$ be the solution to \eqref{eq:weakdarcy}. Let $(\bfu_h,p_h,\phi_h)\in\VkZ\times\QkZ \times \QksZ$ be the solution to \eqref{eq:discretedarcy2}. Then,
	\begin{align}
		\tn \bfE \bfu-\bfu_h \tn_h
		+	\| Ep-p_h \|_{\Omdh} +	\| \Es \phi-\phi_h \|_{h}
		&\lesssim h \left(\|\bfu\|_{\bfH^{1}(\Omega)} + \|\dive\bfu\|_{H^{1}(\Omega)}+\|p\|_{H^{1}(\Omega)}+h^{1/2} \| \phi\|_{H^2(\Sigma)} \right).
	\end{align}
\end{thm}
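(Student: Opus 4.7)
The plan is to use a standard Strang-type error argument: decompose the error into an interpolation part and a discrete part, control the interpolation part by Lemma~\ref{lem:interp}, and use the inf-sup condition from Theorem~\ref{thm:big_infsup} to bound the discrete part. Concretely, I split
\begin{align*}
\bfE\bfu - \bfu_h &= \underbrace{(\bfE\bfu - \interpv\bfu)}_{=:\pmb{\eta}_u} + \underbrace{(\interpv\bfu - \bfu_h)}_{=:\pmb{e}_u}, \\
Ep - p_h &= \underbrace{(Ep - \projp p)}_{=:\eta_p} + \underbrace{(\projp p - p_h)}_{=:e_p}, \\
\Es\phi - \phi_h &= \underbrace{(\Es\phi - \projps\phi)}_{=:\eta_\phi} + \underbrace{(\projps\phi - \phi_h)}_{=:e_\phi}.
\end{align*}
Here $(\pmb{e}_u,e_p,e_\phi)\in \VkZ\times\QkZ\times\QksZ$, so I can apply Theorem~\ref{thm:big_infsup} to produce test functions $(\bfv_h,q_h,\chi_h)$ satisfying the inf-sup lower bound and the boundedness $\tn(\bfv_h,q_h,\chi_h)\tn_h \lesssim \tn(\pmb{e}_u,e_p,e_\phi)\tn_h$. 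Note that since $\int_\Omega \dive(\interpv \bfu-\bfu_h) = \int_\Sigma (\interpv \bfu - \bfu_h)\cdot \bfn = 0$ the second part of Theorem~\ref{thm:big_infsup} guarantees $q_h \in \QkZ$ as required.

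Next, I use linearity of $\Af$ together with the consistency identity from Lemma~\ref{lem:consistency} to rewrite
\[
\Af(\pmb{e}_u,e_p,e_\phi;\bfv_h,q_h,\chi_h) = -\Af(\pmb{\eta}_u,\eta_p,\eta_\phi;\bfv_h,q_h,\chi_h) - s(\bfE\bfu,\bfv_h).
\]
The first term on the right is handled by the continuity bounds in Lemma~\ref{lem:continuity}, which combined with the interpolation estimates \eqref{eq:interpresu}--\eqref{eq:interpreslag} gives
\[
|\Af(\pmb{\eta}_u,\eta_p,\eta_\phi;\bfv_h,q_h,\chi_h)| \lesssim h\bigl(\|\bfu\|_{\bfH^{1}(\Omega)} + \|\dive\bfu\|_{H^1(\Omega)} + \|p\|_{H^1(\Omega)} + h^{1/2}\|\phi\|_{H^2(\Sigma)}\bigr)\,\tn(\bfv_h,q_h,\chi_h)\tn_h.
\]

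The main obstacle is the non-conforming ghost-penalty consistency residual $s(\bfE\bfu,\bfv_h)$ from \eqref{eq:suforEu}: because $\bfE\bfu\in\bfH^{1,\dive}(\RR^d)$ is not piecewise in $\Vk$, the normal-derivative jumps on interior faces of $\mcT_\Sigma$ do not vanish. I plan to bound it by Cauchy--Schwarz on each face, $|s(\bfE\bfu,\bfv_h)|\le s(\bfE\bfu,\bfE\bfu)^{1/2} s(\bfv_h,\bfv_h)^{1/2}$, and then estimate $s(\bfE\bfu,\bfE\bfu)^{1/2}\lesssim h\|\bfu\|_{\bfH^1(\Omega)}$ by applying the element-wise trace inequality \eqref{eq:trace_ineq} to $D_{\bfn_F}\bfE\bfu$ on both sides of each face $F\in\mcFs$ followed by the Sobolev--Stein stability \eqref{eq:sobo_velbound}. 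Since $s(\bfv_h,\bfv_h)^{1/2}\le \tn\bfv_h\tn_h$ by definition, this residual is also $\mathcal{O}(h)$ times $\tn(\bfv_h,q_h,\chi_h)\tn_h$.

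Combining the lower bound with the two upper bounds, then cancelling one factor of $\tn(\bfv_h,q_h,\chi_h)\tn_h$ against $\tn(\pmb{e}_u,e_p,e_\phi)\tn_h$, yields the desired first-order bound for the discrete error. A final triangle inequality with Lemma~\ref{lem:interp} absorbs $(\pmb{\eta}_u,\eta_p,\eta_\phi)$ and produces the stated estimate. I expect the only non-routine step to be the sharp estimate of $s(\bfE\bfu,\bfE\bfu)^{1/2}$, since controlling the jumps of an only $\bfH^1$-regular extension requires careful use of the trace inequality on both sides of each stabilization face and summing over the strip $\Omsig$.
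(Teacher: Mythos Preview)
Your overall strategy---split into interpolation and discrete parts, invoke Theorem~\ref{thm:big_infsup} on the discrete error, then use consistency (Lemma~\ref{lem:consistency}) and continuity (Lemma~\ref{lem:continuity}) to reduce everything to interpolation errors---is exactly the paper's approach. The one place you diverge is in the treatment of the ghost-penalty consistency residual $s(\bfE\bfu,\bfv_h)$.

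Your plan is to Cauchy--Schwarz and then bound $s(\bfE\bfu,\bfE\bfu)^{1/2}$ directly by applying the trace inequality~\eqref{eq:trace_ineq} to $D_{\bfn_F}\bfE\bfu$ on each side of the face. This step does not go through under the stated regularity: inequality~\eqref{eq:trace_ineq} requires its argument to lie in $H^1(T)$, but with only $\bfu\in\bfH^{1}(\Omega)$ the normal derivative $D_{\bfn_F}\bfE\bfu$ is merely $L^2(T)$ and has no well-defined face trace, so $s(\bfE\bfu,\bfE\bfu)$ is not even finite a~priori. You correctly flag this as the delicate step, but ``careful use of the trace inequality'' cannot rescue it without extra regularity.

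The paper avoids the issue by first adding and subtracting the interpolant $\interpv\bfu$ inside the residual, so that what must actually be estimated is
\[
\Bigl(\sum_{F\in\mcF_h}h^{3}\|D_{\bfn_F}\interpv\bfu\|_F^2\Bigr)^{1/2},
\]
which involves only the \emph{discrete} function $\interpv\bfu$. For this term the discrete trace and inverse inequalities~\eqref{eq:discrete_trace_ineq}--\eqref{eq:inverse} apply, giving control by $h\|\nabla\interpv\bfu\|_{\Omdh}$; a second add/subtract of $\bfE\bfu$ at the volume level, together with the local interpolation estimates (Lemma~\ref{lem:loc_interp}) and the extension stability~\eqref{eq:sobo_velbound}, yields the $\mathcal O(h)$ bound---see~\eqref{eq:consterr}. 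Replace your direct bound on $s(\bfE\bfu,\bfE\bfu)$ with this detour through $\interpv\bfu$ and the remainder of your argument goes through unchanged.
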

\begin{proof}
	Adding and subtracting the interpolant and using the triangle inequality we have,
	\begin{align}
		\tn \bfE\bfu-\bfu_h \tn_h &\leq  \tn \bfE\bfu-\interpv\bfu \tn_h + \tn \bfu_h-\interpv\bfu \tn_h, \\
          \|Ep-p_h\|_{\Omdh}&\leq \|Ep-\projp p\|_{\Omdh} + \|p_h-\projp p\|_{\Omdh},\\
          \|\Es \phi-\phi_h\|_{h}&\leq \|\Es \phi-\projps \phi\|_{h} + \|\phi_h-\projps \phi\|_{h}.
	\end{align}
	We now seek to estimate  $\tn (\bfu_h-\interpv\bfu,p_h-\projp p,\phi_h-\projps \phi) \tn_h =\tn \bfu_h-\interpv\bfu \tn_h +\|p_h-\projp p\|_{\Omdh}+\|\phi_h-\projps \phi\|_{h}$. 
        Applying Theorem \ref{thm:big_infsup} to $(\bfw_h, \xi_h, \sigma_h)=(\bfu_h-\interpv\bfu, p_h-\projp p,\phi_h-\projps \phi)$, we have that there exist $(\bfv_h,q_h,\chi_h)\in \VkZ\times\QkZ\times\QksZ$ such that 
	\begin{align}
		\tn (\bfw_h,\xi_h,\sigma_h) \tn_h^2 &\lesssim  \Af(\bfw_h,\xi_h,\sigma_h;\bfv_h,q_h,\chi_h), \label{eq:DinfsupEst}\\ 
		\tn (\bfv_h,q_h,\chi_h) \tn_h &\lesssim \tn (\bfw_h,\xi_h,\sigma_h) \tn_h.\label{eq:boundnorm}
	\end{align}
	Consistency, Lemma \ref{lem:consistency}, yields 
	\begin{align}\label{eq:afconst}
		\Af(\bfw_h,\xi_h,\sigma_h;\bfv_h,q_h,\chi_h)
		&= \Mf(\bfu_h-\interpv\bfu, \bfv_h) + \Bf(\bfv_h,p_h-\projp p) + c(\bfv_h,\phi_h-\projps \phi) \\
		&\quad + \Bf( \bfu_h-\interpv\bfu,q_h) + c( \bfu_h-\interpv\bfu,\chi_h) - s_c(\phi_h-\projps \phi,\chi_h) \nonumber\\
		&= \Mf(\bfE\bfu-\interpv\bfu, \bfv_h) + \Bf(\bfv_h,Ep-\projp p) +c(\bfv_h, \Es \phi-\projps \phi)- s(\bfE\bfu,\bfv_h) \nonumber\\
		&\quad + \Bf(\bfE\bfu-\interpv\bfu,q_h) + c(\bfE\bfu-\interpv\bfu,\chi_h) -s_c(\Es \phi -\projps \phi,\chi_h). \nonumber
	\end{align}
Using the Cauchy-Schwartz inequality we have 
	\begin{align}\label{eq:contcs}
	  c(\bfv_h,\Es \phi-\projps \phi )-s_c(\Es \phi-\projps \phi, \chi_h )  
          \lesssim &\|h^{1/2}\bfv_h\cdot\bfn\|_{\Sigma}\|h^{-1/2}(\Es \phi-\projps \phi) \|_{\Sigma} \\
                   &+ s_c(\Es \phi-\projps \phi,\Es \phi-\projps \phi)^{1/2}s_c(\chi_h,\chi_h)^{1/2}.\nonumber
	\end{align}
     Using that $s(\bfE\bfu,\bfv_h)$ is defined as in equation \eqref{eq:suforEu}, adding and subtracting the interpolant $\pi_h\bfu$, applying the Cauchy-Schwartz inequality together with continuity of the bilinear forms $\Mf$, $\Bf$, $c$ (Lemma~\ref{lem:continuity}), and \eqref{eq:contcs} we get from equation \eqref{eq:afconst} that
     \begin{align}
		\Af(\bfw_h,\xi_h,\sigma_h;\bfv_h,q_h,\chi_h)
		&\lesssim 
                \left(\tn \bfE\bfu-\interpv\bfu \tn_h+\|Ep-\projp p\|_{\Omdh} + \left(\sum_{F\in\mcF_h} h^{3}\|D_{\bfn_F}\interpv\bfu\|_F^2\right)^{1/2}
                +\|\Es \phi-\projp \phi \|_h
           \right)  \nonumber \\
       & \quad \cdot \tn (\bfv_h,q_h,\chi_h) \tn_h.
	\end{align}
   
 Using \eqref{eq:DinfsupEst} followed by \eqref{eq:boundnorm} we end up with the estimate
	\begin{align}
		\tn &(\bfu_h-\interpv\bfu, p_h-\projp p,\phi_h-\projps \phi) \tn_h \\
		&\lesssim  \tn \bfE\bfu-\interpv\bfu \tn_h+\|Ep-\projp p\|_{\Omdh}  +\left( \sum_{F \in \mcF_h} h^{3} \| D_{\bfn_F} \interpv \bfu\|_F^2 \right)^{1/2} +\|\Es \phi-\projp \phi \|_h. \nonumber 
	\end{align} 
	Using a standard element-wise trace inequality, an inverse estimate, local interpolation estimates (Lemma \ref{lem:loc_interp}), and the stability of the extension operator we have  
	\begin{align} \label{eq:consterr}
	   \sum_{F \in \mcF_h} h^{3} \| D_{\bfn_F} \interpv \bfu\|_F^2 
	   &\lesssim \sum_{T \in \mesh} h^{2} \| \nabla \interpv \bfu\|_T^2   \\
	  &\lesssim \sum_{T \in \mesh} h^{2} \left( \| \nabla (\interpv \bfu-\bfE\bfu)\|_T^2+  \| \nabla \bfE\bfu \|_T^2 \right)  \lesssim
	  h^{2} \|\bfu\|_{\bfH^{1}(\Omega)}^2. \nonumber
	\end{align}
	Finally, using the interpolation and projection error estimates in Lemma \ref{lem:interp} we get:
	\begin{align*}
          \tn (\bfE\bfu-\bfu_h, Ep-p_h,\Es\phi_h-\phi_h) \tn_h 
             &\lesssim
               \tn \bfE\bfu-\interpv\bfu \tn_h +\|Ep-\projp p\|_{\Omdh} \\
               &+\left( \sum_{F \in \mcF_h} h^{3} \| D_{\bfn_F} \interpv \bfu\|_F^2 \right)^{1/2} 
               +\|\Es \phi-\projp \phi \|_h \\
		&\lesssim h\left(\|\bfu\|_{\bfH^{1}(\Omega)} + \|\dive\bfu\|_{H^{1}(\Omega)}+\|p\|_{H^{1}(\Omega)} + h^{1/2} \| \phi\|_{H^2(\Sigma)}\right).
	\end{align*}

\end{proof}

\section{Condition number estimate}\label{sec:condnbr}
Based on the results of the previous section we prove that the spectral condition number of the resulting linear system scales as $h^{-2}$ independent of how the background mesh cuts the boundary. We will need the $L^2$-norm of $(\bfv_h,q_h,\chi_h)\in\VkZ\times\QkZ\times\QksZ$;
\begin{align}\label{eq:L2_system_norm}
	\|(\bfv_h,q_h,\chi_h)\|^2_{\Omdh} := \|\bfv_h\|^2_{\Omdh} + \|q_h\|^2_{\Omdh} + \|\chi_h\|^2_{\Omdh}\ = \|\bfv_h\|^2_{\Omdh} + \|q_h\|^2_{\Omdh} + \|\chi_h\|^2_{\Omsig}.
\end{align}

Let $N:=\dim(\VkZ\times\QkZ\times\QksZ)$ and denote by $\vec{v} \in \RR^{N}$ the unique vector containing expansion coefficients of $(\bfv_h,q_h,\chi_h)$ in the basis of the spaces $\VkZ\times\QkZ\times\QksZ$. Similarly we denote by $\vec{w}$ the unique coefficients of $(\bfw_h,r_h,\sigma_h)\in \VkZ\times\QkZ\times\QksZ$. 
Let $\|\vec{v}\|_2=\sqrt{\vec{v}\cdot\vec{v}}$ be the Euclidean $2$-norm of $\vec{v}$. Similarly let $A\in\RR^{N\times N}$ be the matrix associated with $\Af$, defined by 
\begin{equation}
A\vec{v}\cdot\vec{w} = \Af((\bfv_h,q_h,\chi_h),(\bfw_h,r_h,\sigma_h)).  
\end{equation}
Since the matrix $A$ is symmetric, the spectral condition number of the matrix is defined by 
\begin{align}
  \kappa (A) =\|A\|_2\|A^{-1}\|_2 = \frac{\lambda_{\textrm{max}}}{\lambda_{\textrm{min}}},
\end{align}
where $\lambda_{\textrm{max}}$ and $\lambda_{\textrm{min}}$ denote the largest and the smallest eigenvalues of $A$. 
 
The following result holds by our assumptions on the mesh \cite[Lemma A.1]{Ern2006Evaluation}.
\begin{lemma}\label{lem:condlem1}
	For every $(\bfv_h,q_h,\chi_h)\in\VkZ\times\QkZ\times\QksZ$,
	\begin{align}
		h^{d/2}\|\vec{v}\|_2 \lesssim \|(\bfv_h,q_h,\chi_h)\|_{\Omdh} \lesssim h^{d/2}\|\vec{v}\|_2.
	\end{align}
\end{lemma}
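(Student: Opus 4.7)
The statement is the standard norm equivalence on quasi-uniform meshes, cited from \cite[Lemma~A.1]{Ern2006Evaluation}. My plan is to reduce it to a component-wise scaling argument on each of the three finite element spaces in the product $\VkZ\times\QkZ\times\QksZ$. Because the $L^2$-norm in \eqref{eq:L2_system_norm} splits as the sum of squared $L^2$-norms on $\Omdh$ or $\Omsig$, and because $\|\vec{v}\|_2^2$ splits correspondingly into the squared Euclidean norms of the coefficient blocks associated with $\bfv_h$, $q_h$, and $\chi_h$, it is enough to prove the equivalence
\[
h^{d/2}\|\vec{w}_h\|_{2}\ \lesssim\ \|w_h\|_{\Omega_{*}}\ \lesssim\ h^{d/2}\|\vec{w}_h\|_{2}
\]
separately for $w_h\in\VkZ$ on $\Omdh$, for $w_h\in\QkZ$ on $\Omdh$, and for $w_h\in\QksZ$ on $\Omsig$.

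For each of these three spaces I would carry out the usual reference-element argument. Let $\hat T$ be the reference simplex with a fixed basis of the local finite element space: the reference Raviart--Thomas basis in the vector case and monomials up to degree $k$ (respectively $k+1$) in the scalar cases. The reference local mass matrix is symmetric positive definite with eigenvalues contained in a fixed positive interval. Pulling back to a physical element $T$ by the affine map (combined with the Piola transform in the Raviart--Thomas case) produces a net Jacobian factor of $h_T^d$ in the $L^2$ inner product, so that $\|w_h\|_T^2\sim h_T^d\,\sum_i |w_{T,i}|^2$, where $w_{T,i}$ denote the local coefficients on $T$. Summing over all $T\in\mesh$ (or $T\in\mcT_\Sigma$ for the multiplier space) and invoking quasi-uniformity $h_T\sim h$ together with the fact that each global degree of freedom is shared by at most a bounded number of elements (shape regularity), the sum of local $L^2$-norms is comparable to $h^d$ times the squared Euclidean norm of the global coefficient vector.

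The only mildly delicate point is the mean-zero constraint built into $\QkZ$. I would handle it by noting that the constraint can be imposed by eliminating a single coefficient through a local subtraction against a reference basis function, which changes the Euclidean norm of the coefficient vector only by a uniformly bounded factor; equivalently, one may work on the unconstrained space and pass the constraint to the quotient, where the induced norms remain equivalent with $h$-independent constants. The Lagrange multiplier space $\QksZ$ is actually easier than the conforming spaces, because it is the direct sum $\bigoplus_{T\in\mcT_\Sigma}Q_{k+1}(T)$ with no interelement coupling, so the local-to-global scaling is immediate. Assembling the three component estimates and using $|\Omsig|\le|\Omdh|$ yields the desired two-sided bound. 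I expect no substantial obstacle beyond bookkeeping, which is precisely why the paper invokes the result as a direct citation.
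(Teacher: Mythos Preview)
The paper does not give its own proof; it simply cites \cite[Lemma~A.1]{Ern2006Evaluation} and the quasi-uniformity assumption. Your sketch is exactly the standard argument underlying that citation, so in spirit your proposal and the paper agree.

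One technical point deserves care. Your assertion that the Piola transform ``produces a net Jacobian factor of $h_T^d$ in the $L^2$ inner product'' is not correct as stated. For $\bfv=\frac{1}{\det B_T}B_T\hat\bfv\circ F_T^{-1}$ one computes
\[
\|\bfv\|_T^2=\frac{1}{|\det B_T|}\int_{\hat T}|B_T\hat\bfv|^2\,d\hat x\;\sim\;h_T^{2-d}\,\|\hat\bfv\|_{\hat T}^2,
\]
so with the Piola-transported reference basis (equivalently, the canonical integrated-flux degrees of freedom) the local mass matrix for $\RT_k$ scales like $h_T^{2-d}$, not $h_T^d$. The $h_T^d$ scaling in the lemma holds only if the basis is normalized so that each $\phi_i$ is $O(1)$ in $L^\infty$, for instance by using averaged-flux degrees of freedom. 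The paper never specifies which basis is meant, and the downstream condition-number bound $\kappa(A)\lesssim h^{-2}$ is invariant under any such diagonal rescaling of the basis, so nothing is actually at stake for the final result; but if you want your argument to literally establish the stated inequality you must declare the normalization of the $\RT_k$ basis explicitly. The remaining pieces of your sketch (the discontinuous spaces $\QkZ$ and $\QksZ$, and the handling of the mean-zero constraint) are unproblematic.
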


\begin{lemma}\label{lem:condlem2}
	For every $(\bfv_h,q_h,\chi_h)\in\VkZ\times\QkZ\times\QksZ$,
	\begin{align}
		\|(\bfv_h,q_h,\chi_h)\|_{\Omdh} \lesssim \tn (\bfv_h,q_h,\chi_h)\tn_h \lesssim h^{-1}\|(\bfv_h,q_h,\chi_h)\|_{\Omdh}.
	\end{align}
\end{lemma}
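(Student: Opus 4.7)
The plan is to verify each of the two inequalities term by term, using the equivalence results already proved (Lemma \ref{lem:sp_ineq}, inequality \eqref{eq:sa_stabbound}, and Lemma \ref{lem:lagrange}) together with the discrete trace inequality \eqref{eq:discrete_trace_ineq} and a standard inverse estimate \eqref{eq:inverse}. There is no serious obstacle: this is a bookkeeping lemma showing that the triple norm is sandwiched between the $L^2$ norm and $h^{-1}$ times the $L^2$ norm on the active mesh/strip.

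For the lower bound $\|(\bfv_h,q_h,\chi_h)\|_{\Omdh} \lesssim \tn(\bfv_h,q_h,\chi_h)\tn_h$, I would treat the three components separately. The velocity piece follows directly from \eqref{eq:sa_stabbound}: $\|\bfv_h\|_{\Omdh}^2 \lesssim \|\bfv_h\|_\Omega^2 + s(\bfv_h,\bfv_h) \lesssim \tn\bfv_h\tn_\Mf^2 \leq \tn\bfv_h\tn_h^2$ (the permeability bound is used to pass from $\|\permi^{1/2}\bfv_h\|_\Omega$ to $\|\bfv_h\|_\Omega$). The pressure piece $\|q_h\|_{\Omdh}^2$ is identical on both sides. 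For the multiplier, Lemma \ref{lem:lagrange} yields $\|h^{-1}\chi_h\|_{\Omsig}^2 \lesssim \|\chi_h\|_h^2$, so in particular $\|\chi_h\|_{\Omsig}^2 \lesssim h^2 \|\chi_h\|_h^2 \lesssim \|\chi_h\|_h^2$ for $h$ bounded.

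For the upper bound $\tn(\bfv_h,q_h,\chi_h)\tn_h \lesssim h^{-1}\|(\bfv_h,q_h,\chi_h)\|_{\Omdh}$, I again handle the three components. By \eqref{eq:sa_stabbound}, $\tn\bfv_h\tn_\Mf^2 \lesssim \|\bfv_h\|_{\Omdh}^2$. An element-wise inverse estimate \eqref{eq:inverse} gives $\|\dive\bfv_h\|_{\Omdh}^2 \lesssim h^{-2}\|\bfv_h\|_{\Omdh}^2$, and the discrete trace inequality \eqref{eq:discrete_trace_ineq} applied on each $T\in\mcT_\Sigma$ yields $\|\bfv_h\cdot\bfn\|_\Sigma^2 \lesssim h^{-1}\|\bfv_h\|_{\Omsig}^2$, so $\|h^{1/2}\bfv_h\cdot\bfn\|_\Sigma^2 \lesssim \|\bfv_h\|_{\Omdh}^2$. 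Summing these three contributions gives $\tn\bfv_h\tn_h^2 \lesssim h^{-2}\|\bfv_h\|_{\Omdh}^2$. The pressure term is unchanged. For the multiplier, the right-hand inequality of Lemma \ref{lem:lagrange} gives $\|\chi_h\|_h^2 \lesssim \|h^{-1}\chi_h\|_{\Omsig}^2 = h^{-2}\|\chi_h\|_{\Omsig}^2$.

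Summing the three bounds,
\begin{equation*}
\tn(\bfv_h,q_h,\chi_h)\tn_h^2 \lesssim h^{-2}\|\bfv_h\|_{\Omdh}^2 + \|q_h\|_{\Omdh}^2 + h^{-2}\|\chi_h\|_{\Omsig}^2 \lesssim h^{-2}\|(\bfv_h,q_h,\chi_h)\|_{\Omdh}^2,
\end{equation*}
which is the desired upper bound. The proof is essentially a direct invocation of the equivalence lemmas already established for each variable, so no new estimate is required.
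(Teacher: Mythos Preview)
Your proposal is correct and follows essentially the same approach as the paper: both argue component by component, invoking \eqref{eq:sa_stabbound} and Lemma~\ref{lem:lagrange} for the lower bound, and adding the inverse estimate on $\dive\bfv_h$ together with the discrete trace inequality \eqref{eq:discrete_trace_ineq} for the upper bound. Your write-up is in fact slightly tighter on the boundary term (you obtain $\|h^{1/2}\bfv_h\cdot\bfn\|_\Sigma^2 \lesssim \|\bfv_h\|_{\Omdh}^2$ rather than the paper's weaker $\lesssim h^{-1}\|\bfv_h\|_{\Omdh}^2$), but this makes no difference for the final estimate.
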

\begin{proof}
	Recall the definition \eqref{eq:systnorm}:
	\begin{align}
		\tn (\bfv_h,q_h,\chi_h) \tn^2_h = \tn \bfv_h\tn^2_h + \|q_h\|^2_{\Omdh} + \| \chi_h \|^2_{h}. \nonumber
	\end{align}
        By Lemma~\ref{lem:lagrange} we have  
	\begin{align}
		\|\chi_h\|_{\Omsig} \lesssim \|h^{-1}\chi_h\|_{\Omsig} \lesssim \|\chi_h\|_{h} \lesssim \| h^{-1}\chi_h\|_{h}.
	\end{align} 
	We are left to prove that 
	\begin{align}
		\|\bfv_h\|_{\Omdh} \lesssim \tn \bfv_h\tn_h \lesssim h^{-1}\|\bfv_h\|_{\Omdh}.
	\end{align}
        Using \eqref{eq:sa_stabbound} and the definition of $\tn \bfv_h\tn_h$ we have
\begin{equation}
  \|\bfv_h\|_{\Omdh} \lesssim \tn \bfv_h \tn_h   
  \lesssim  \|\bfv_h\|_{\Omdh}  + \| \dive \bfv_h\|_{\Omdh}^2 + \|h^{1/2}\bfv_h\cdot \pmb{n}\|^2_{\Sigma}.
        \end{equation}
 Using an inverse inequality on the divergence term and the element-wise trace inequality \eqref{eq:discrete_trace_ineq}  we have       
	\begin{align}
		\|\dive \bfv_h\|_{\Omdh} &\lesssim h^{-1} \|\bfv_h\|_{\Omdh}, \\
		\|h^{1/2}\bfv_h\cdot\bfn\|_{\Sigma} &\lesssim h^{-1/2} \|\bfv_h\|_{\Omdh},
	\end{align}
        and the desired estimate follows. 
\end{proof}
\begin{thm}\textbf{(Condition number scaling)}
  The following bound holds for the spectral condition number
	\begin{align}
	\kappa (A) =\frac{\lambda_{\textrm{max}}}{\lambda_{\textrm{min}}} \lesssim h^{-2}.
	\end{align}
\end{thm}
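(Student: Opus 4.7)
The plan is to prove the two estimates $\|A\|_2 \lesssim h^{d-2}$ (upper bound on the largest eigenvalue in magnitude) and $\|A^{-1}\|_2 \lesssim h^{-d}$ (lower bound on the smallest eigenvalue in magnitude), from which $\kappa(A) = \|A\|_2 \|A^{-1}\|_2 \lesssim h^{-2}$ follows independently of how $\Sigma$ cuts the background mesh.

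For the upper bound, I would first assemble continuity of the full system form $\Af$ in the triple norm. Each individual form is continuous by Lemma \ref{lem:continuity}; the $c$-term is controlled by $\|h^{1/2}\bfv\cdot\bfn\|_\Sigma \|h^{-1/2}\chi\|_\Sigma \leq \tn \bfv \tn_h \|\chi\|_h$, and for $s_c$ one invokes Lemma \ref{lem:lagrange} to bound $\|h^{-1}\chi\|_{\Omsig} \lesssim \|\chi\|_h$. Combining these gives $|\Af(V,W)| \lesssim \tn V \tn_h \, \tn W \tn_h$ for any $V=(\bfv_h,q_h,\chi_h)$ and $W=(\bfw_h,r_h,\sigma_h)$. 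Then I chain the inverse-type bound of Lemma \ref{lem:condlem2} ($\tn V \tn_h \lesssim h^{-1}\|V\|_{\Omdh}$) with the coefficient equivalence of Lemma \ref{lem:condlem1} ($\|V\|_{\Omdh} \lesssim h^{d/2}\|\vec{v}\|_2$) to conclude
\begin{equation*}
|A\vec{v}\cdot\vec{w}| = |\Af(V,W)| \lesssim h^{d-2}\,\|\vec{v}\|_2\,\|\vec{w}\|_2,
\end{equation*}
and taking the supremum yields $\|A\|_2 \lesssim h^{d-2}$.

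For the lower bound, I would invoke the full inf-sup estimate of Theorem \ref{thm:big_infsup}: given $V$, choose $W$ with $\Af(V,W) \gtrsim \tn V \tn_h^2$ and $\tn W \tn_h \lesssim \tn V \tn_h$. Then by duality
\begin{equation*}
\|A\vec{v}\|_2 \;\geq\; \frac{|A\vec{v}\cdot\vec{w}|}{\|\vec{w}\|_2} \;\gtrsim\; \frac{\tn V \tn_h^2}{\|\vec{w}\|_2}.
\end{equation*}
Apply Lemma \ref{lem:condlem1} to get $\|\vec{w}\|_2 \lesssim h^{-d/2}\|W\|_{\Omdh}$, Lemma \ref{lem:condlem2} to get $\|W\|_{\Omdh} \lesssim \tn W \tn_h$, and the inf-sup bound to reduce the right-hand side to $\|\vec{w}\|_2 \lesssim h^{-d/2}\tn V \tn_h$. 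Inserting this and running Lemmas \ref{lem:condlem2} and \ref{lem:condlem1} in the reverse direction gives
\begin{equation*}
\|A\vec{v}\|_2 \;\gtrsim\; h^{d/2}\,\tn V \tn_h \;\gtrsim\; h^{d/2}\,\|V\|_{\Omdh} \;\gtrsim\; h^d\,\|\vec{v}\|_2,
\end{equation*}
so $\|A^{-1}\|_2 \lesssim h^{-d}$ and $\kappa(A) \lesssim h^{-2}$.

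The main obstacle is only the bookkeeping of passing between the triple system norm, the active-mesh $L^2$ norm, and the Euclidean coefficient norm; no new technical ingredient is needed beyond what Lemma \ref{lem:continuity}, Lemma \ref{lem:lagrange}, Lemma \ref{lem:condlem1}, Lemma \ref{lem:condlem2}, and Theorem \ref{thm:big_infsup} already provide. A minor subtlety is that $A$ is a saddle-point matrix and therefore indefinite, so $\lambda_{\textrm{min}}$ must be interpreted in absolute value; the argument above delivers $\|A\vec{v}\|_2 \gtrsim h^d \|\vec{v}\|_2$ directly, which controls $\|A^{-1}\|_2$ regardless of sign.
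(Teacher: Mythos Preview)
Your proposal is correct and follows essentially the same route as the paper: continuity of $\Af$ combined with Lemmas~\ref{lem:condlem1}--\ref{lem:condlem2} for the upper bound, and the inf-sup Theorem~\ref{thm:big_infsup} combined with the same two lemmas for the lower bound. Your remark that the saddle-point matrix is indefinite and that one should therefore bound $\|A^{-1}\|_2$ via $\|A\vec v\|_2\gtrsim h^d\|\vec v\|_2$ rather than via a Rayleigh-quotient argument is in fact a welcome clarification of a point the paper leaves implicit.
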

\begin{proof}
  We have that
  \begin{equation}
    \lambda_{\textrm{max}} = \max_{\vec{v}\in\RR^N\setminus\{0\}} \frac{A\vec{v}\cdot\vec{v}}{\|\vec{v}\|_2}, \quad
    \lambda_{\textrm{min}} = \min_{\vec{v}\in\RR^N\setminus\{0\}} \frac{A\vec{v}\cdot\vec{v}}{\|\vec{v}\|_2}.
  \end{equation}  
	By Lemma \ref{lem:continuity} (continuity),  Lemma \ref{lem:condlem2}, and Lemma \ref{lem:condlem1}, we have
	\begin{align}
		A\vec{v}\cdot\vec{w} &= \Af((\bfv_h,q_h,\chi_h),(\bfw_h,r_h,\sigma_h)) \lesssim \tn (\bfv_h,q_h,\chi_h)\tn_h \tn (\bfw_h,r_h,\sigma_h)\tn_h \nonumber\\
		&\lesssim h^{-2}\|(\bfv_h,q_h,\chi_h)\|_{\Omdh}\|(\bfw_h,r_h,\sigma_h)\|_{\Omdh} 
		\lesssim h^{d-2}\|\vec{v}\|_2\|\vec{w}\|_2,
	\end{align}
	whereby $ \lambda_{\textrm{max}}\lesssim h^{d-2}.$ By Theorem \ref{thm:big_infsup} we also have the existence of a $\vec{w}$ associated to any $\vec{v}$ such that 
	\begin{align}
		A\vec{v}\cdot\vec{w} &= \Af((\bfv_h,q_h,\chi_h),(\bfw_h,r_h,\sigma_h)) \gtrsim \tn (\bfv_h,q_h,\chi_h)\tn_h \tn (\bfw_h,r_h,\sigma_h)\tn_h \nonumber\\ 
		&\gtrsim \|(\bfv_h,q_h,\chi_h)\|_{\Omdh}\|(\bfw_h,r_h,\sigma_h)\|_{\Omdh} 
		\gtrsim h^{d}\|\vec{v}\|_2\|\vec{w}\|_2, \label{eq:blessed_ineq}
	\end{align}
	where we again used Lemma \ref{lem:condlem1} and \ref{lem:condlem2}. Hence $\lambda_{\textrm{min}}\gtrsim h^{d}$. Combining the bounds we obtain the desired estimate 
	\begin{equation}
          \frac{\lambda_{\textrm{max}}}{\lambda_{\textrm{min}}}  \lesssim \frac{h^{d-2}}{ h^{d}} \lesssim h^{-2}.
	\end{equation}
\end{proof}

\section{Numerical experiments}\label{sec:numex}
We consider three examples which test the method in different ways. The examples are implemented in the open source CutFEM library written in C++ and based on FreeFEM, and the code is available at \cite{repo}. 
As a short-hand we will in this section write $Q_k$ for $\Qk$ and $\RT_k$ for $\Vk$. 
We consider two element triples, namely $\RT_0\times Q_0\times Q_1^{\Sigma}$ and $\RT_1\times Q_1\times Q_2^{\Sigma}$. In order to avoid computations of derivatives we use the extension based stabilization \eqref{eq:patch_1} and \eqref{eq:patch_2} of $s$ and $s_b$, respectively. For all examples we choose the stabilization parameters to be equal to one, i.e.,  $\tau=\tau_b=\tau_c=1$.
The focus will be on investigating the convergence of the velocity, its divergence and the pressure. The $1$-norm estimate (MATLAB's function condest) condition number is also evaluated for each example. We solve each linear system with the direct solver UMFPACK.

\begin{itemize}
\item In Example 1 we have an exact solution where the divergence is not in the approximation space for the pressure $Q_k$, for any $k\geq 0$. The boundary $\partial \Omega_h$ is a piecewise linear approximation of a circle in an unfitted mesh on which pure essential conditions are imposed using the proposed stabilized Lagrange multiplier method. We compare different stabilization terms for the multiplier. 
	\item In Example 2 we have an exact solution with divergence in the space $Q_1$. We use a rectangular domain and compare the proposed cut finite element method with a standard fitted finite element method where boundary conditions are imposed strongly.
	\item In Example 3 we consider an interface problem wherein both the interface and the domain boundary are unfitted. This example exhibits mixed boundary conditions.
\end{itemize}

\subsection{Example 1 (divergence outside pressure space)}
Let $\Omega$ be the disk with boundary $\Sigma=\partial \Omega$ the circle with radius $0.45$. We let $\Omega_0=[0,1]^2$. We take $\permi = 1$ and the exact solution to problem \eqref{eqs:strongDarcy} is  
\[ \pmb{u} = (2\pi\cos(2\pi x)\cos(2\pi y), -2\pi\sin(2\pi x)\sin(2\pi y))^T,\quad p=-\sin(2\pi x)\cos(2\pi y). \] 
The Dirichlet boundary condition is enforced weakly everywhere on a piecewise linear approximation of $\partial \Omega$. See Figure \ref{fig:example 2 solution} for a heatmap of the magnitude of the approximated velocity field obtained with the element triple $\RT_1\times Q_1\times Q_2^{\Sigma}$. 
We study the stabilization term $s_c$~\eqref{eq:full stab-c} and the alternative stabilization $\tilde s_c$~\eqref{eq:alt stab-c} and $\hat s_c$~\eqref{eq:alt stab-c 2}. 

\begin{figure}[ht!]
	\centering
	\begin{subfigure}[b]{0.45 \textwidth}  	
		\centering	
		\includegraphics[scale=0.3]{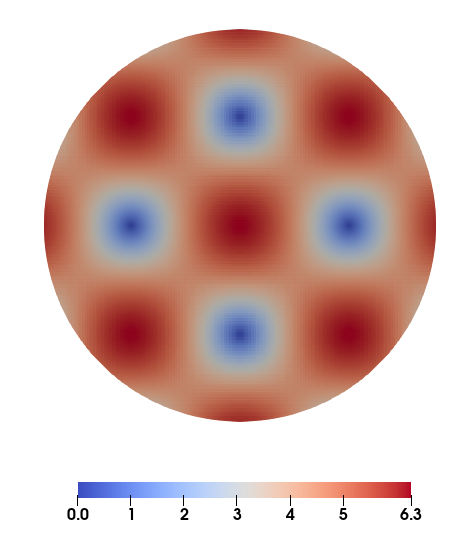}  
	\end{subfigure}
	\caption{Example 1: Magnitude of the approximated velocity field with the element triple $\RT_1\times Q_1\times Q_2^{\Sigma}$. 
	}
	\label{fig:example 2 solution}
\end{figure}

In this example the exact divergence is not in the approximation space of the pressure for any of the considered element triples.   
We show the $L^2$- and the $L{^\infty}$-error of the divergence in Figure \ref{fig:diverror example 2}. 
For the proposed method (regardless of choice of stabilization $s_c$ versus $\hat s_c$), the $L^2$-error and $L^{\infty}$-error of the divergence converges with optimal order. Notice that this is not the case in general for a method which does not satisfy Theorem \ref{thm:divpres}, we showcase this in \cite{FraHaNilZa22}.

\begin{figure}[ht]
	\centering
	\includegraphics[scale=0.40]{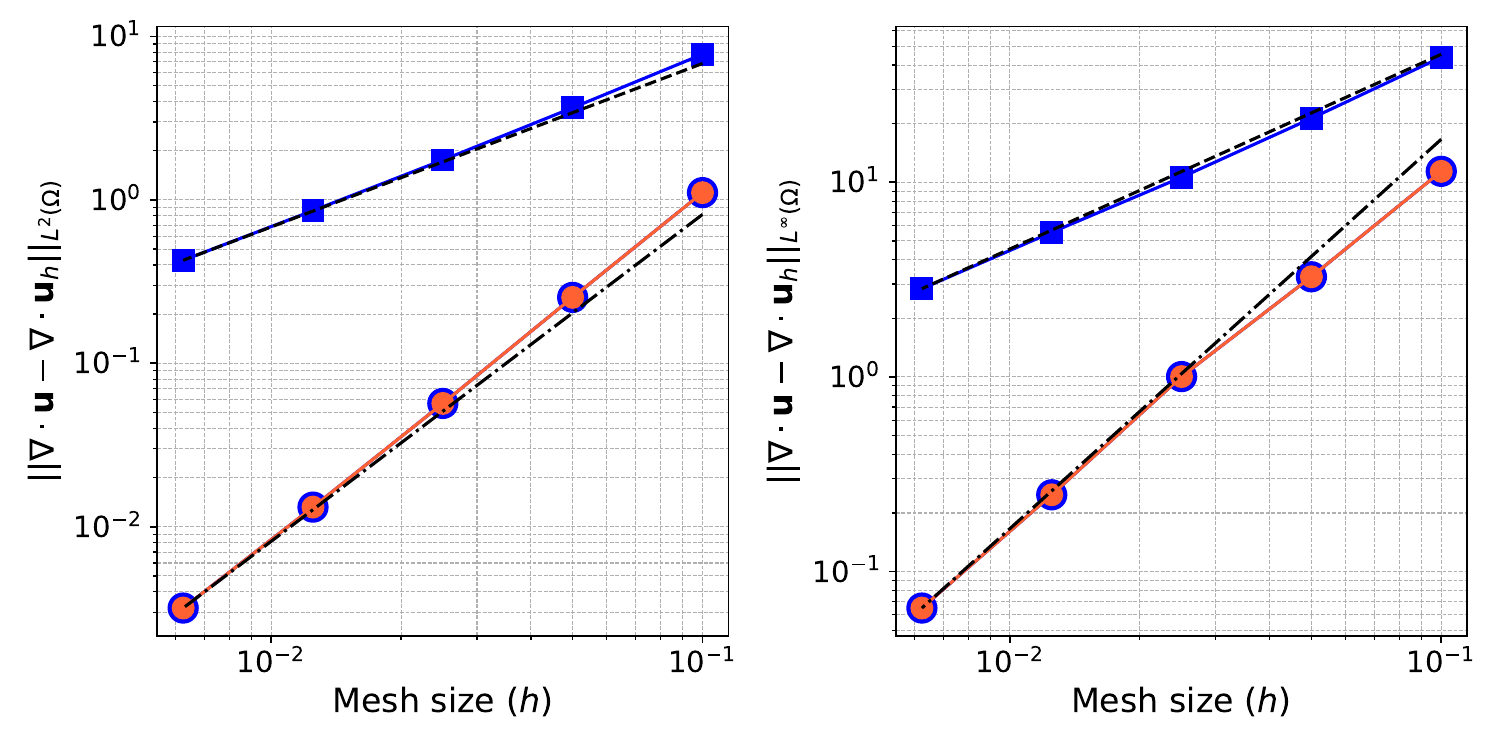}\\
	\includegraphics[scale=0.6]{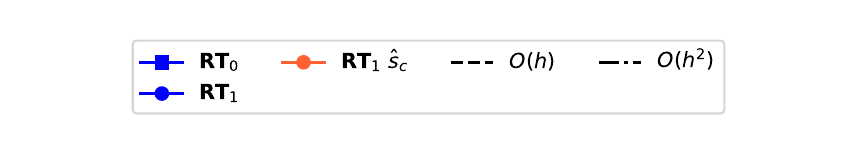}
	\caption{Example 1: The divergence error versus mesh size $h$, using element triples $\RT_0\times Q_0 \times Q^{\Sigma}_1$ and $\RT_1\times Q_1\times Q^{\Sigma}_2$. The different stabilization terms for the Lagrange multiplier are compared.  
		Left: The $L^2$-error of the divergence. Right: The pointwise error of the divergence.}
	\label{fig:diverror example 2}      
\end{figure}

\begin{figure}[ht]
	\centering 
	\includegraphics[scale=0.40]{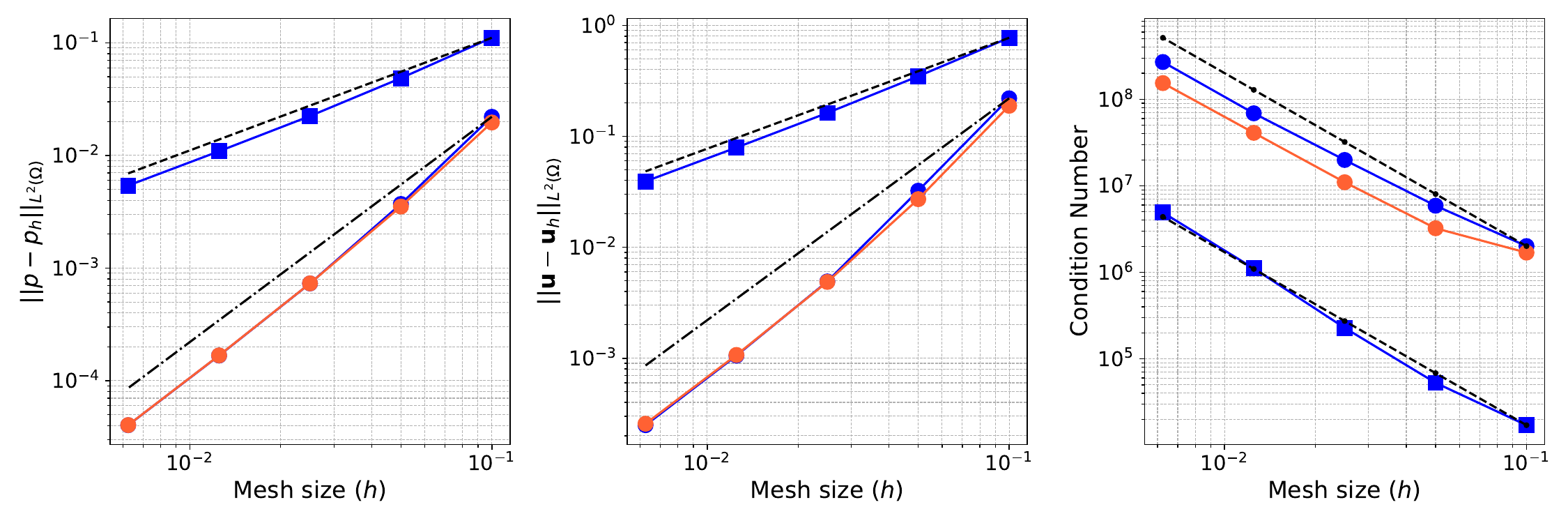} \\
	\includegraphics[scale=0.6]{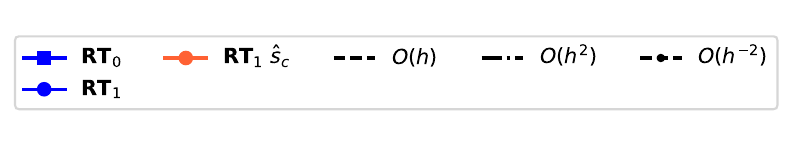}
	\caption{ 
		Example 1: Convergence and condition numbers using element triples $\RT_0\times Q_0 \times Q^{\Sigma}_1$ and $\RT_1\times Q_1\times Q^{\Sigma}_2$ against mesh size $h$. The different stabilization terms for the Lagrange multiplier are compared.  
		Left: The $L^2$-error of the pressure. Middle:  The $L^2$-error of the velocity field. Right: The $1$-norm condition number.
		\label{fig:plot example2 convergence} }                     
\end{figure}

The convergence of the approximated velocity and pressure in the $L^2$-norm and the condition number of the associated matrix are shown in Figure \ref{fig:plot example2 convergence}. Regardless of stabilization $s_c$ versus $\hat s_c$, the method exhibits optimal convergence order for the proposed element triples, and an optimal $O(h^{-2})$ scaling of the condition number.
However, for the stabilization $\hat s_c$ the condition number has a smaller magnitude. We have not optimized the constants in the stabilization terms. In general the results are very similar for these two stabilization terms.

\subsubsection{Comparison of the alternative stabilization terms}\label{sec:alt stab}
Let us now compare the performance of $\tilde s_c$~\eqref{eq:alt stab-c} with $\hat s_c$~\eqref{eq:alt stab-c 2}, which is our proposed modification of $\tilde s_c$. We use $k=1$. For the Lagrange multiplier space we use either $Q_2^{\Sigma}$ or $Q_1^{\Sigma}$. The results are shown in Figure \ref{fig:plot example2 3alts}. We see that the condition number of the variant using $\tilde s_c$ is less stable compared to the variant $\hat s_c$, especially when using $Q_2^{\Sigma}$. The convergence order of $\tilde s_c$ drops slightly in the last data points when using $Q_1^{\Sigma}$, this is likely just the error jumping up to a new asymptotic constant. However, using $\hat s_c$ with $Q_1^{\Sigma}$ the convergence is stable and of order two.
\begin{figure}[ht]
	\centering 
	\includegraphics[scale=0.40]{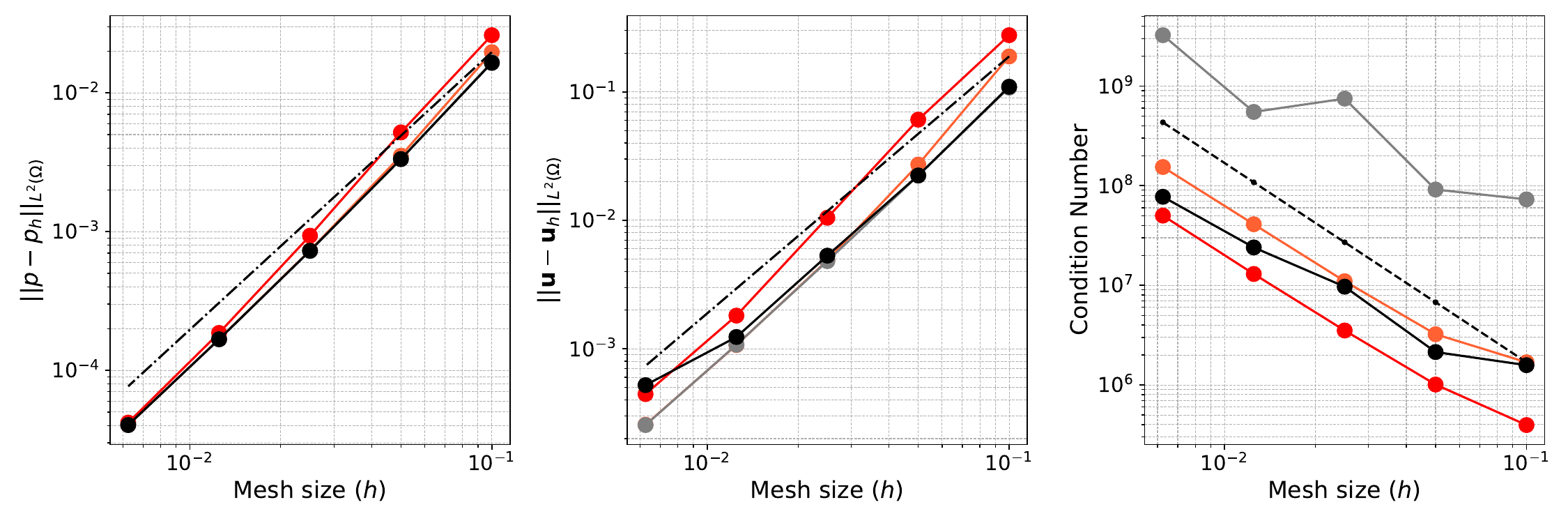} \\
	\includegraphics[scale=0.6]{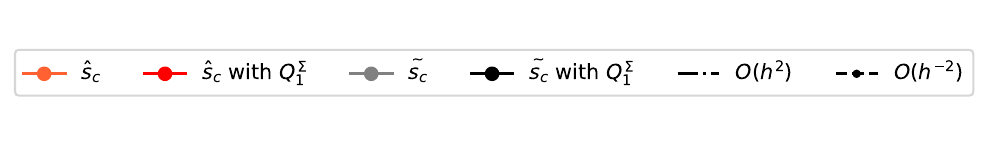}
	\caption{ 
		Example 1.1: Convergence and condition numbers versus mesh size $h$ using element triple $\RT_1\times Q_1\times Q^{\Sigma}_1$ and $\RT_1\times Q_1\times Q^{\Sigma}_2$. The alternative stabilization terms $\hat s_c$ and $\tilde s_c$ are compared.  
		Left: The $L^2$-error of the pressure. Middle:  The $L^2$-error of the velocity field. Right: The $1$-norm condition number.
		\label{fig:plot example2 3alts} }
\end{figure}

The choice of the normal vector in the bulk term for $\tilde s_c$ and $\hat s_c$ is not unique but is of importance. When we instead of using the gradient of the level set function as the normal, used the per element piecewise constant boundary normal, the velocity error was larger.

\subsubsection{Polynomial order of the Lagrange multiplier space}\label{sec:alt stab}
For almost all the examples we considered, switching the Lagrange multiplier space from $\Qks$ to $Q_{h,k}^{\Sigma}$ performs just as well. However, we illustrate in this example that when the right hand side data $\pmb{f}$ has a very large magnitude, then the accuracy with which the boundary condition is satisfied discretely is important for how much the velocity and the pressure error mix. In the context of the Stokes equations, these considerations are related to pressure robustness, and are discussed in \cite{FraNilZa23}. 

Let us consider the same circle geometry but with the following different data:
\begin{align}
	\bfu=0, \quad p=C(y^3-y^2/2+y-7/12)
\end{align}
with a positive constant $C>0$. We consider $k=0$ and investigate the velocity error for the element triples $\RT_0\times Q_0\times Q_0^{\Sigma}$ and $\RT_0\times Q_0\times Q_1^{\Sigma}$ for two values of $C=10^2,10^4.$

The velocity errors $\|\bfu\|_{L^2(\Omega)}$ are shown in Table \ref{tab:q0vsq1}. We see that the error is significantly reduced when using the higher order Lagrange multiplier space. This relation continues to hold with increasing $C$. 
We note in addition that in this example when the error in the velocity is only due to the perturbation of the boundary condition, the velocity convergence is of order two for the higher order Lagrange multiplier space, while it is of order one for the lower order Lagrange multiplier space.
\begin{table}[ht]
    \centering
    \begin{minipage}{0.45\textwidth}
        \centering
        \begin{tabular}{c|cc|cc}
            \multicolumn{1}{c|}{$C=10^2$} & \multicolumn{2}{c|}{Error} & \multicolumn{2}{c}{Convergence} \\\hline
            $h$ & $Q_1^{\Sigma}$ & $Q_0^{\Sigma}$ & $Q_1^{\Sigma}$ & $Q_0^{\Sigma}$ \\\hline
            0.1     & 1.73071   & 12.8871  & -     & -      \\
            0.05    & 0.36365   & 7.35506  & 2.25  & 0.81   \\
            0.025   & 0.0689639 & 3.7977   & 2.40  & 0.95   \\
            0.0125  & 0.0172261 & 1.96159  & 2.00  & 0.95   
        \end{tabular}
    \end{minipage}
    \hfill
    \begin{minipage}{0.45\textwidth}
        \centering
        \begin{tabular}{c|cc|cc}
            \multicolumn{1}{c|}{$C=10^4$} & \multicolumn{2}{c|}{Error} & \multicolumn{2}{c}{Convergence} \\\hline
            $h$ & $Q_1^{\Sigma}$ & $Q_0^{\Sigma}$ & $Q_1^{\Sigma}$ & $Q_0^{\Sigma}$ \\\hline
            0.1     & 173.071 & 1288.71  & -     & -      \\
            0.05    & 36.365  & 735.506  & 2.25  & 0.81   \\
            0.025   & 6.89639 & 379.77   & 2.40  & 0.96   \\
            0.0125  & 1.72261 & 196.159  & 2.00  & 0.95   
        \end{tabular}
    \end{minipage}
    \caption{Example 1.2: Comparison of the velocity error $\|\bfu\|_{L^2(\Omega)}$ and convergence order for the element triples $\RT_0\times Q_0\times Q_0^{\Sigma}$ and $\RT_0\times Q_0\times Q_1^{\Sigma}$ under different constants $C$.}
    \label{tab:q0vsq1}
\end{table}

\subsection{Example 2 (fitted versus unfitted)}
Consider the rectangle $\Omega=[0,1]\times[0,0.5]$. 
Let $\Omega_0=\Omdh = [0,1]\times[0,0.5+\epsilon]$ and $\Sigma = \{(x,y)\in\Omdh: y=0.5\}$ be the unfitted part of the boundary $\partial\Omega$. We compare the proposed unfitted method with the standard finite element method. For the standard FEM we use a fitted mesh on $\Omega$ and the boundary condition is imposed strongly on all boundaries (no Lagrange multipliers are used.) 
For the proposed cut finite element method we use a mesh that conforms to the left, right, and bottom boundary of the rectangle but not to the top boundary ( $\Sigma\subset\partial\Omega$ is unfitted). The boundary condition is enforced weakly on $\Sigma$ and strongly on $\partial\Omega\setminus\Sigma$. We let $\eta=1$ and take a linear divergence $g=2x+2y-1.5\in Q_{h,1}$ with the exact solution to problem \eqref{eqs:strongDarcy} being
\begin{align*}
	p=-(x^3/3-x^2/2 + y^3/3-y^2/4),\quad\quad \bfu = (x(x-1),y(y-0.5)).
\end{align*}
See Figure \ref{fig:ex1_sol} for a heat map of the velocity computed with the unfitted method.

\begin{figure}[ht]
	\centering 
	\includegraphics[scale=0.36]{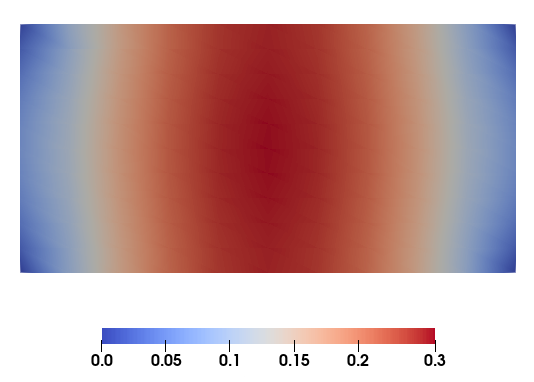}
	\caption{ 
		Example 2: Velocity solution magnitude using element triple $\RT_1\times Q_1\times Q_2^{\Sigma}$.
		\label{fig:ex1_sol} }            
\end{figure}

In Figure \ref{fig:ex1_upc} we observe for both element triples $\RT_0\times Q_0\times Q_1^{\Sigma}$ and $\RT_1\times Q_1 \times Q_2^{\Sigma}$ optimal convergence order for both pressure and velocity, as well as optimal condition number scaling. Aside from slightly larger condition numbers (see Figure~\ref{fig:ex1_upc}) and max-norm divergence errors (see Figure~\ref{fig:ex1_div}), the unfitted method performs as well as the fitted method. We emphasize that we have not optimized the stabilization constants for the unfitted discretization. We also saw in~\cite{FraHaNilZa22} that using a macroelement stabilization can reduce the max-norm of the divergence error. 

\begin{figure}[ht]
	\centering 
	\includegraphics[scale=0.40]{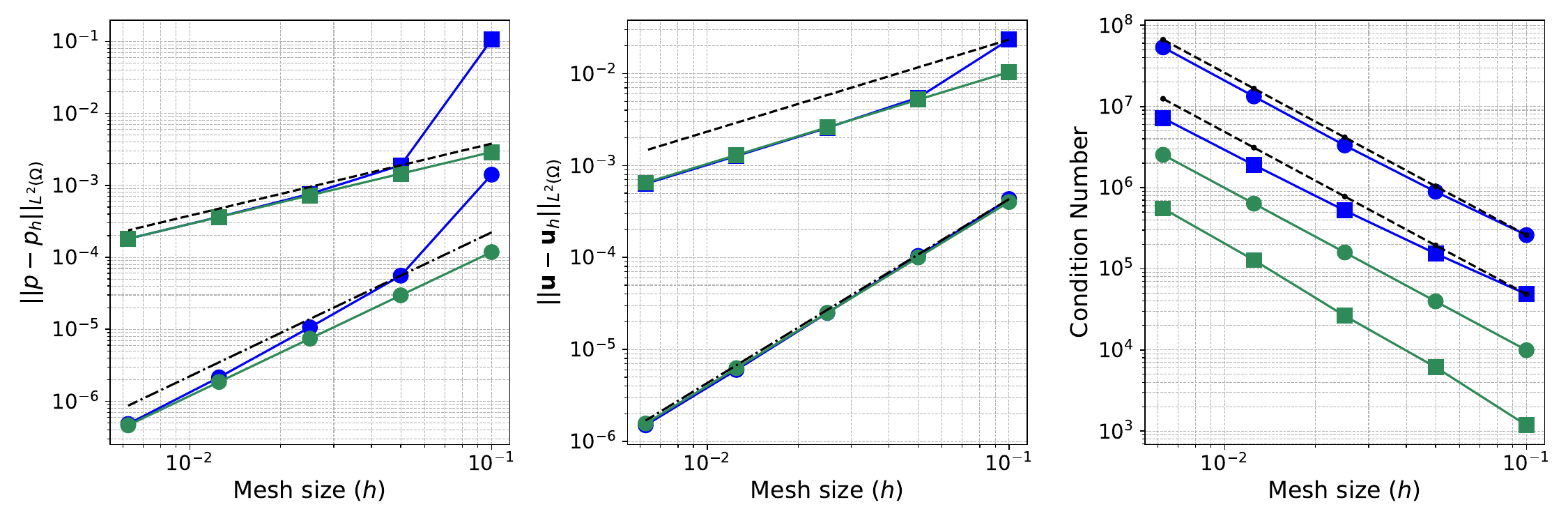} \\
	\includegraphics[scale=0.6]{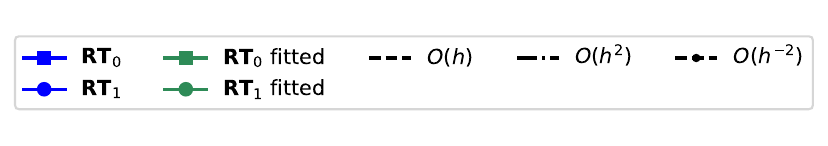}
	\caption{ 
		Example 2: Results using element triples $\RT_0\times Q_0\times Q_1^{\Sigma}$ and $\RT_1\times Q_1 \times Q_2^{\Sigma}$ versus fitted element pairs $\RT_0\times Q_0$ and $\RT_1\times Q_1$. Left: $L^2$-error of pressure. Middle: $L^2$-error of velocity. Right: $1$-norm condition number. 
		\label{fig:ex1_upc} }       
\end{figure}

We plot the $L^2-$ and the pointwise error of the divergence in Figure \ref{fig:ex1_div}. Since $\dive E\bfu\in Q_1$ only the triple $\RT_1\times Q_1\times Q_2^{\Sigma}$ achieves errors of order machine-epsilon. The rounding errors from the direct solver slightly increase with each mesh refinement. 
\begin{figure}[ht]
	\centering
	\includegraphics[scale=0.40]{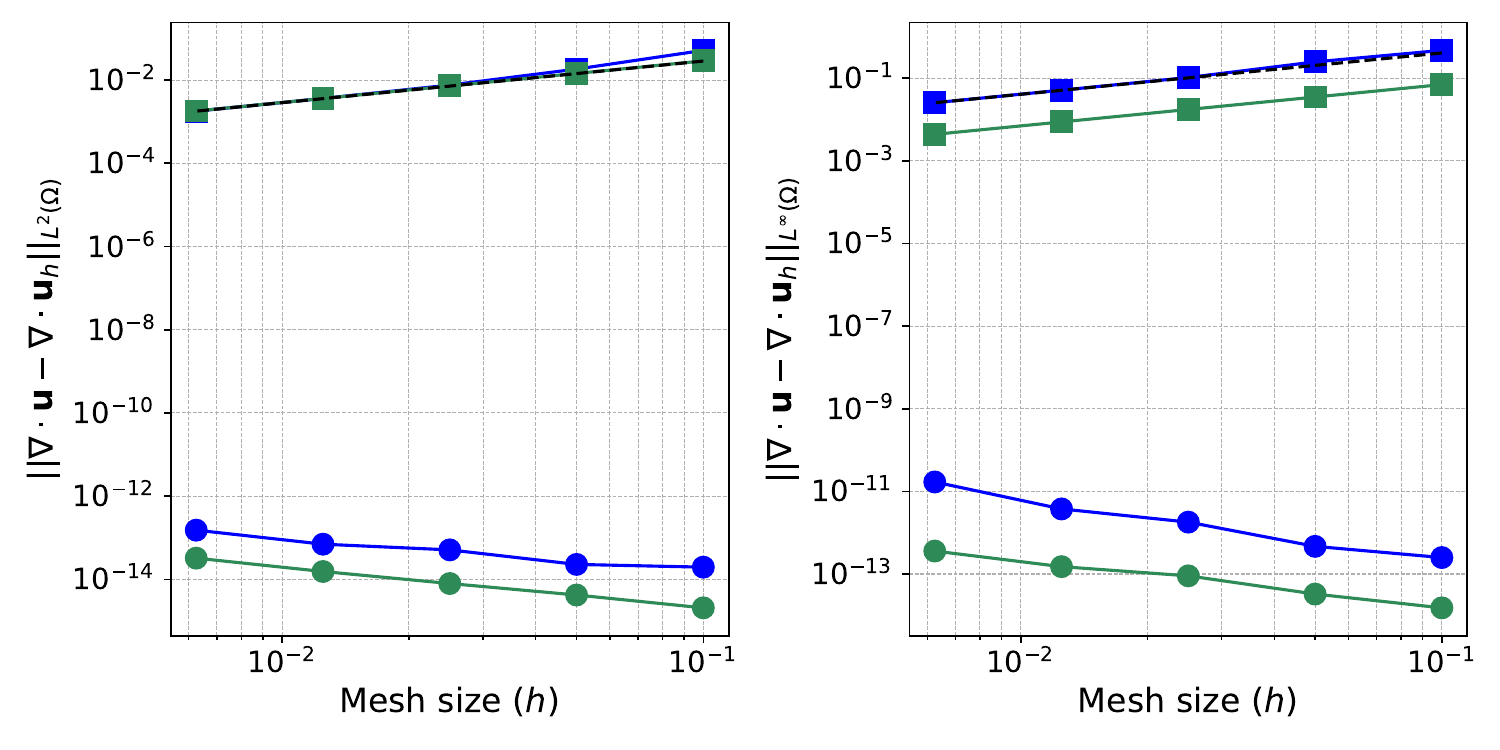} \\
	\includegraphics[scale=0.6]{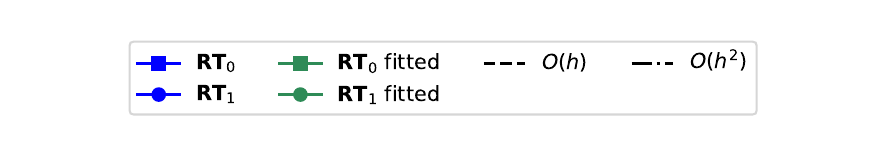}
	\caption{Example 2: The L$^2$-error and pointwise error of the divergence versus mesh size $h$, using element triples $\RT_0\times Q_0\times Q_1^{\Sigma}$ and $\RT_1\times Q_1 \times Q_2^{\Sigma}$. }
	\label{fig:ex1_div}      
\end{figure}

\subsection{Example 3 (interface problem with mixed BC)}
We study this time an annulus $\Omega$ with center $(0.5,0.5)$, inner radius $r_1=0.15$ and outer radius $r_2=0.45$, immersed in $\Omega_0=[0,1]^2$. We introduce a circular interface centered at the middle of the annulus with radius $r_{\Gamma}=0.3.$ We define $\Omega_1\subset\Omega$ as the part with points between $r_1$ and $r_\Gamma$, similarly $\Omega_2$ consist of the points between $r_\Gamma$ and $r_2$.  
Letting $\{p\}=(p_1+p_2)/2$ for $p_i=p|_{\Omega_i}, i=1,2$, the (of Neumann-type) interface conditions are as in \cite{DanSco12,FraHaNilZa22}: 
\begin{align}
	\jump{p} &= \eta_\Gamma\{\bfu\cdot\bfn\} \\
	\{p\} &= \hat{p}+\xi\eta_\Gamma \jump{\bfu\cdot\bfn}.
\end{align}
The problem parameters are $\eta=1$, and 
$$\eta_{\Gamma}= \frac{2r_{\Gamma}}{4\cos(r_{\Gamma}^2)+3},\ \hat{p} = \frac{19r_{\Gamma}^2+12\sin(r_{\Gamma}^2)+8\sin(2r_{\Gamma}^2)+24r_{\Gamma}^2\cos(r_{\Gamma}^2)}{4r_{\Gamma}^2(4\cos(r_{\Gamma}^2)+3)}, \ \xi = 1/8.$$

The exact solution is 
\begin{align*}
p=\sin(x^2+y^2) + \begin{cases} r^2/(2r^2_{\Gamma})+3/2 \ &\text{in }\Omega_1\\ r^2/r^2_{\Gamma}\ &\text{in }\Omega_2 \end{cases},\ \ \bfu = -(x,y)/r^2_{\Gamma}\begin{cases} 1+2\cos(x^2+y^2)\ &\text{in }\Omega_1\\ 2(1+\cos(x^2+y^2))\ &\text{in }\Omega_2 \end{cases},
\end{align*}
with $\dive\bfu=g$, where
\begin{equation}
  g=2/r^2_{\Gamma}\begin{cases}
	2(x^2+y^2)\sin(x^2+y^2)-2\cos(x^2+y^2)-1\ &\text{in }\Omega_1\\
	2((x^2+y^2)\sin(x^2+y^2)-\cos(x^2+y^2)-1)\ &\text{in }\Omega_2
  \end{cases}.
  \end{equation}
Note that $\dive\bfu=g\notin Q_k$, for any $k\geq 0$, we investigate the divergence condition in terms of its order of convergence.
\begin{figure}[ht]
	\centering 
	\begin{subfigure}[b]{0.45 \textwidth}  	
		\centering	
		\includegraphics[scale=0.28]{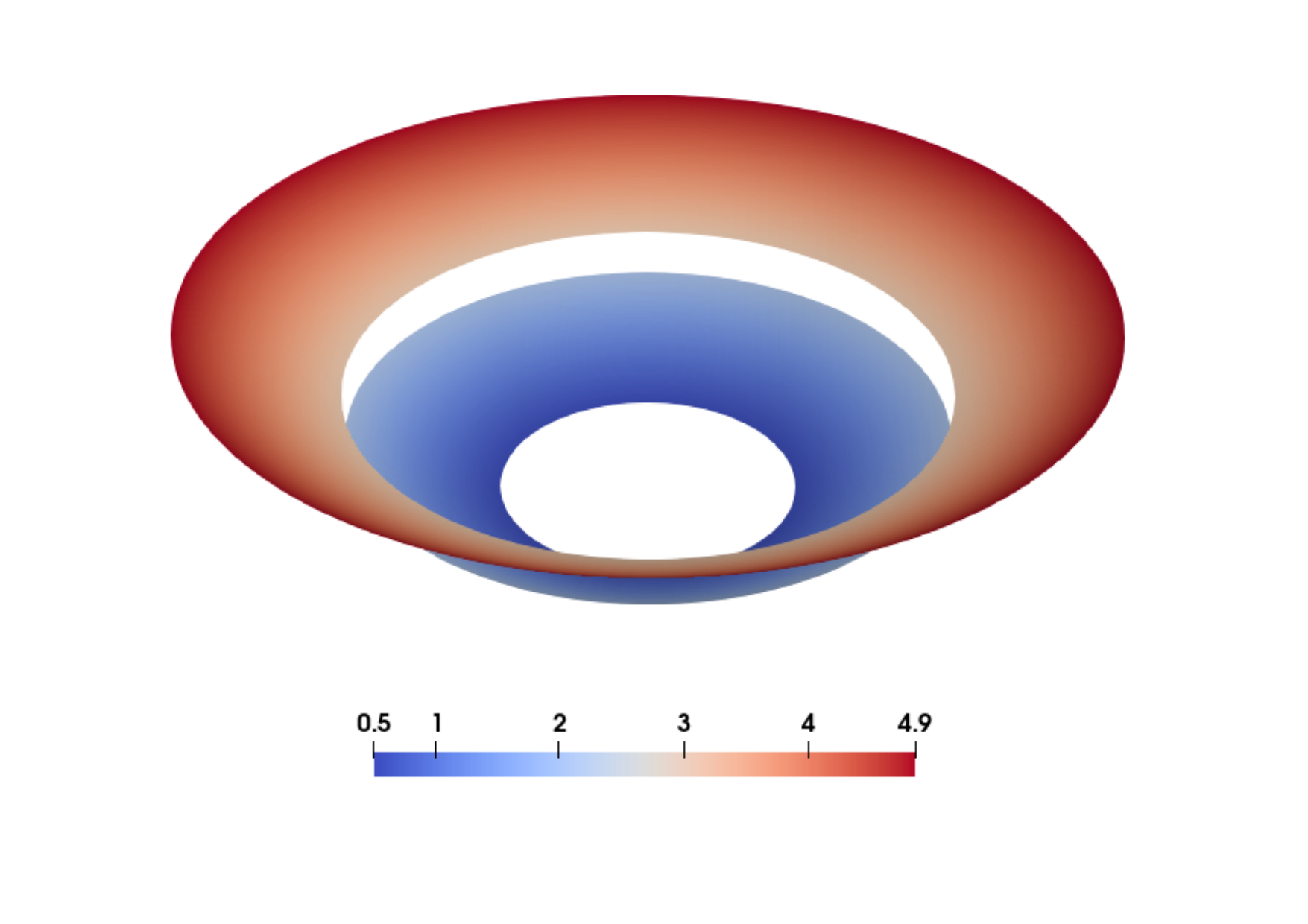}
		\caption{ 
			Pressure height map.
			 }      
	\end{subfigure}
	\begin{subfigure}[b]{0.45 \textwidth}  	
		\centering	
		\includegraphics[scale=0.28]{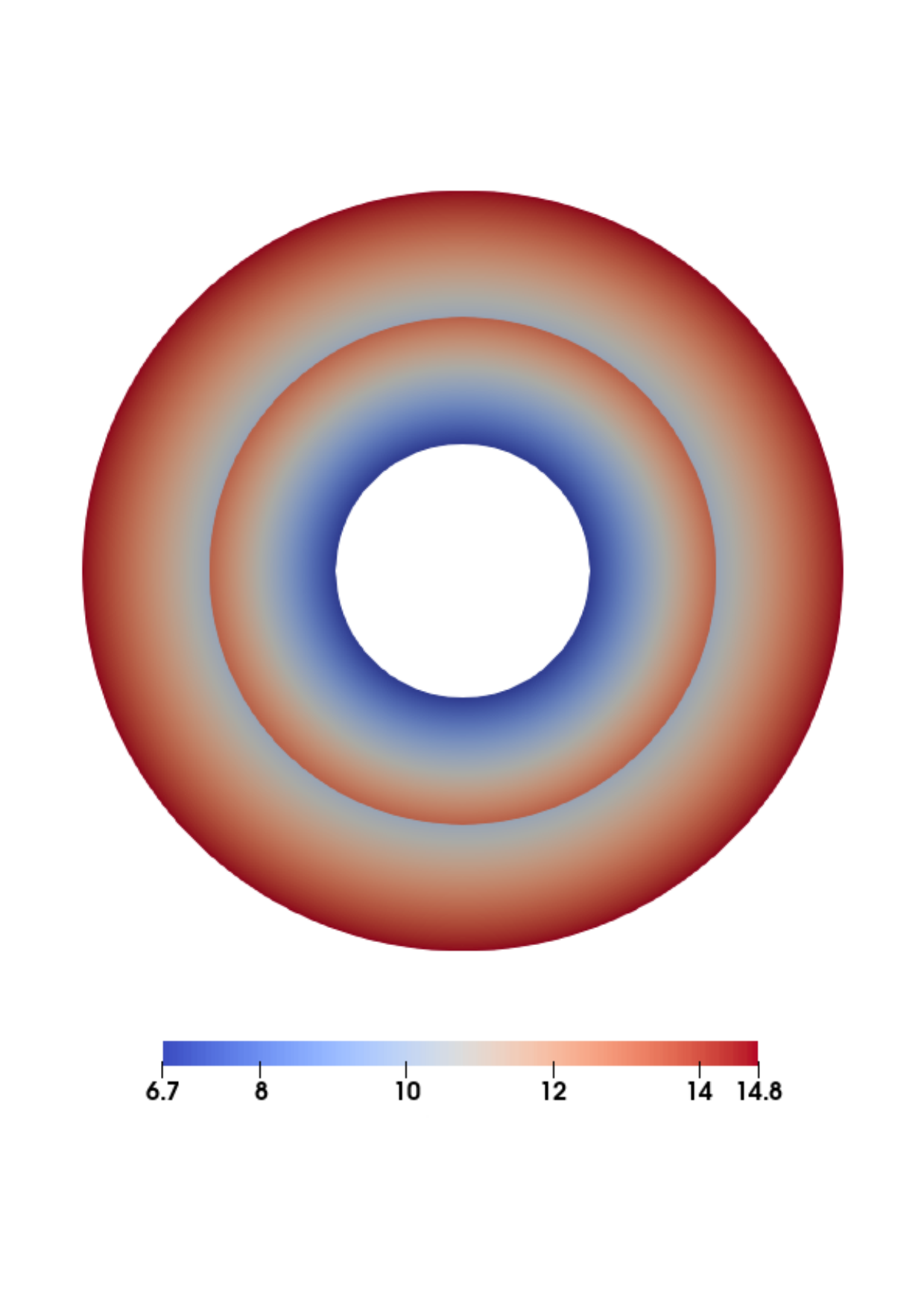}
		\caption{ 
			Velocity magnitude heat map.
			 }    
	\end{subfigure}     
	\caption{ 
		Example 3: Solution using element triple $\RT_1\times Q_1\times Q_2^{\Sigma}$.
	\label{fig:ex3_sol} }            
\end{figure}
On the inner circle with radius $r_1$ we set the Neumann condition $p_h=p$ and on the outer circle with radius $r_2$ we set the Dirichlet condition $\bfu_h\cdot\bfn = \bfu\cdot\bfn=u_B.$ As such, we choose the approximation space for the pressure to be $\Qk=\bigoplus_{T \in \mcT_{h}} Q_k(T)$ since there is no uniqueness issue and the pressure average is not set due to the presence of the Neumann condition. See Figure \ref{fig:ex3_sol} for plots of the solution obtained with the element triple $\RT_1\times Q_1\times Q_2^{\Sigma}$. We compare the proposed Lagrange multiplier method \eqref{eq:discretedarcy2} with the penalty method~\eqref{eq:discretedarcy_nitsche}. The penalty parameter is set to be $100$.

Errors are shown in Figures \ref{fig:plot example3 convergence} and \ref{fig:diverror example 3}. We see mostly comparable results except for the velocity and condition number when using $\RT_0$. Here the penalty method performs noticeably worse and seems to be unstable. This behavior of the penalty method is not present when using $\RT_1$. The divergence errors are however completely equivalent for both methods, as can be seen in Figure \ref{fig:diverror example 3}.

\begin{figure}[ht]
	\centering 
	\includegraphics[scale=0.40]{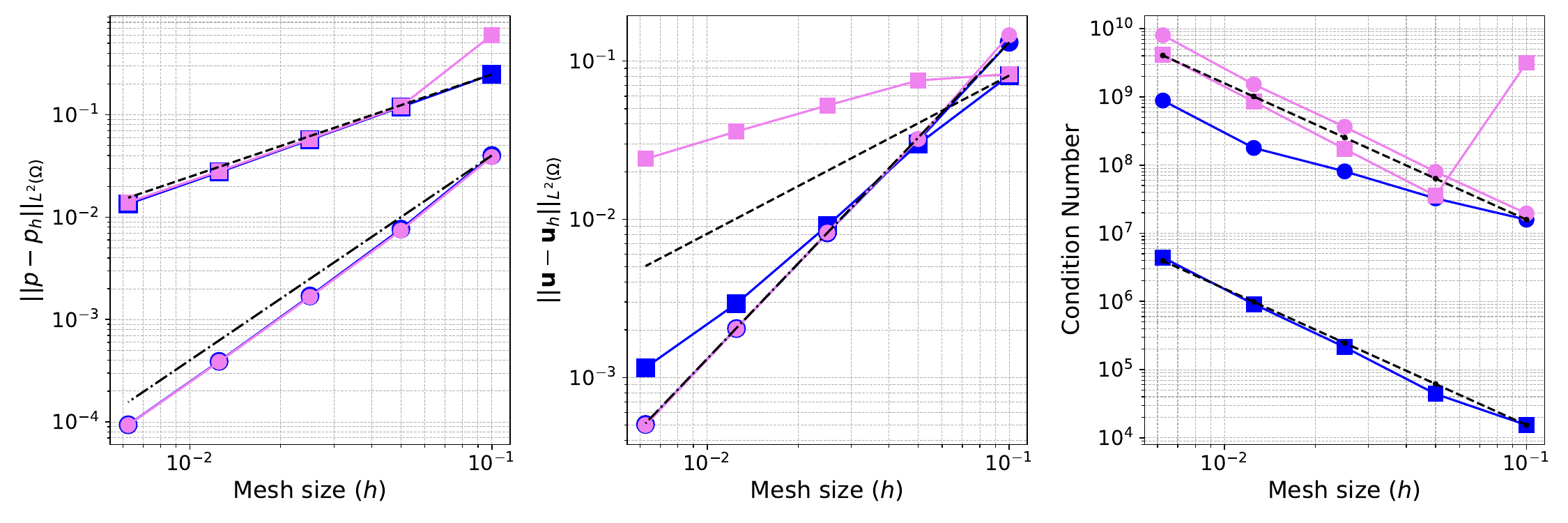} \\
	\includegraphics[scale=0.6]{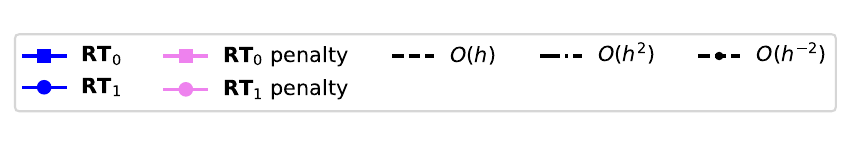}
	\caption{ 
		Example 3: Convergence and condition numbers using element triples $\RT_0\times Q_0 \times Q^{\Sigma}_1$ and $\RT_1\times Q_1\times Q^{\Sigma}_2$ versus mesh size $h$. 
		Left: The $L^2$-error of the pressure. Middle:  The $L^2$-error of the velocity field. Right: The $1$-norm condition number.
		\label{fig:plot example3 convergence} }                     
\end{figure}

\begin{figure}[ht]
	\centering
	\includegraphics[scale=0.40]{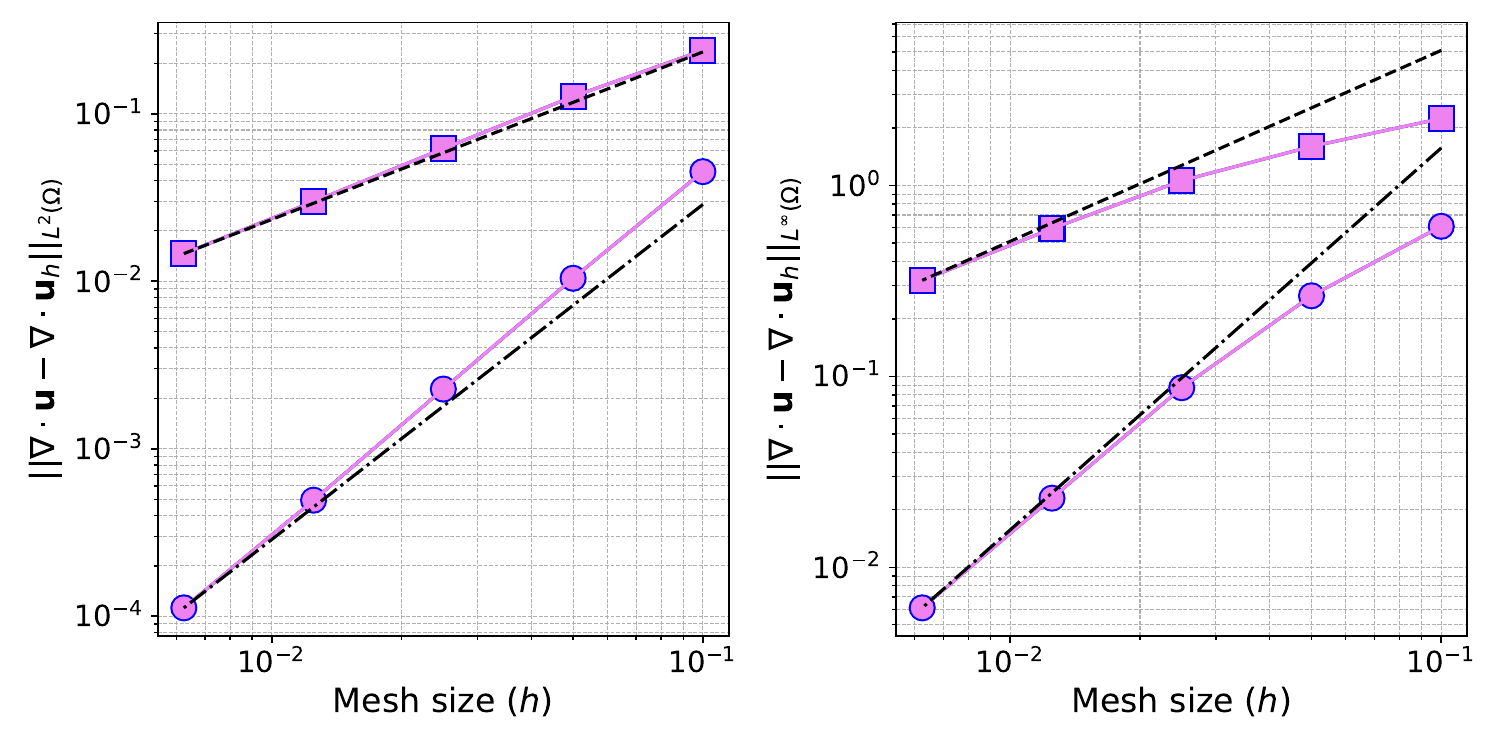}\\
	\includegraphics[scale=0.6]{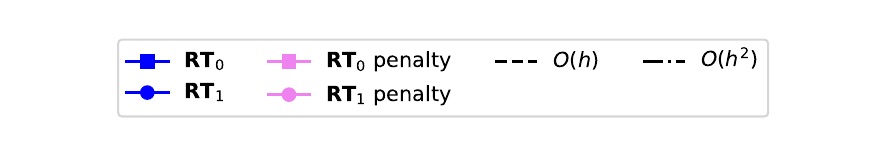}
	\caption{Example 3: The divergence error versus mesh size $h$ using element triples $\RT_0\times Q_0 \times Q^{\Sigma}_1$ and $\RT_1\times Q_1\times Q^{\Sigma}_2$. 
		Left: The $L^2$-error of the divergence. Right: The pointwise error of the divergence.}
	\label{fig:diverror example 3}      
\end{figure}

\section{Conclusion}\label{sec:conclusion}
We have introduced a divergence preserving cut finite element method for the Darcy fictitious domain problem based on the Raviart-Thomas elements $\RT_k$ for the velocity and the piecewise polynomial space $\Qk$ for the pressure, with $k\geq0$. The method uses a stabilized Lagrange multiplier to impose Dirichlet boundary conditions, which results in a perturbed composite saddle-point problem. In order to capture the boundary condition sufficiently well we use a higher polynomial degree than $k$ in the Lagrange multiplier space. We choose the space $\Qks$. We have shown, for the lowest order case $k=0$, that the method is able to retain optimal approximation properties for velocity and pressure, preserve the divergence-free property of the underlying elements, and result in well-posed linear systems where the condition number of the resulting system matrix is bounded independently of the mesh size. 
For the Lagrange multiplier, we studied several stabilization terms that can be used also when higher order elements than piecewise constants are used for the Lagrange multiplier space. 

The described properties of the method are showcased with three numerical examples. We have compared the proposed method to a standard finite element method, an alternative stabilization method for the Lagrange multiplier, and a penalty method for imposing Dirichlet boundary conditions. The results show that the proposed method is able to produce accurate solutions with optimal convergence rates, in a comparable or favorable way to the other methods. 

\begin{appendix}
\section{Auxiliary lemmas} \label{sec:auxlemmas}
We state here some lemmas we need to prove the inf-sup condition Lemma \ref{lem:B_infsup}.

We gather \cite[Lemmas 3.16, 3.17]{Gatica14} in the following collection of local interpolation errors. We take the convention that $H^0(\Omdh)=L^2(\Omdh).$
\begin{lemma}\textbf{(Local interpolation error estimates)}\label{lem:loc_interp}
	Fix $T\in\mesh$. Let $\bfu\in\bfH^{k+1}(T)$ with $\dive\bfu\in H^{k+1}(T)$, then for $0\leq m\leq \ell+1$ and $0\leq \ell \leq k$
	\begin{align*}
		| \bfu-\interpv \bfu |_{m,T} 
		&\lesssim h^{\ell+1-m}|\bfu|_{\ell+1,T} \\
		| \dive\bfu-\dive\interpv \bfu |_{m,T},
		&\lesssim h^{\ell+1-m}|\dive\bfu|_{\ell+1,T}.
	\end{align*} 
\end{lemma}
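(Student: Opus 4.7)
The plan is to follow the standard Bramble-Hilbert + scaling argument on a reference element, tailored to the Raviart-Thomas interpolant.  First I would fix a reference simplex $\hat T$ and recall the contravariant Piola transformation $\mathcal{P}_T:\hat{\bfu}\mapsto \bfu$ that maps $\RT_k(\hat T)$ onto $\RT_k(T)$, preserves normal fluxes across faces, and intertwines the RT-interpolants: $\widehat{\interpv \bfu}=\hat\interpv\hat\bfu$.  Together with the commuting diagram \eqref{eq:commuting_diagram} this gives the identity $\dive(\interpv\bfu)=\projp(\dive\bfu)$ on every element.

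For the first estimate, I would argue as follows. Since $\interpv$ reproduces polynomials in $[\mathcal{P}_\ell(T)]^d\subset \RT_k(T)$ for every $0\le\ell\le k$, the operator $I-\hat\interpv$ annihilates $[\mathcal{P}_\ell(\hat T)]^d$.  Continuity of $\hat\interpv$ on $\bfH^{\ell+1}(\hat T)$ (which uses a trace inequality to control the face degrees of freedom, see Gatica 3.16) and the Bramble--Hilbert lemma then give
\begin{equation*}
  |\hat\bfu-\hat\interpv\hat\bfu|_{m,\hat T}\lesssim |\hat\bfu|_{\ell+1,\hat T},\qquad 0\le m\le \ell+1.
\end{equation*}
Pulling this back via the Piola transform, and using the standard scaling estimates $|\hat\bfu|_{s,\hat T}\sim h^{s-d/2}\,h^{d/2-1}|\bfu|_{s,T}$ (the extra $h^{-1}$ comes from the Piola factor $\det(B)^{-1}B$ with $|B|\sim h$), the Jacobian factors cancel pairwise and leave $h^{\ell+1-m}|\bfu|_{\ell+1,T}$, as claimed.

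For the second estimate, I would exploit the commuting diagram to write
\begin{equation*}
 \dive\bfu-\dive\interpv\bfu = \dive\bfu-\projp(\dive\bfu),
\end{equation*}
and then apply the standard local error estimate for the $L^2$-projection $\projp$ onto $Q_k(T)$, which gives $|\dive\bfu-\projp\dive\bfu|_{m,T}\lesssim h^{\ell+1-m}|\dive\bfu|_{\ell+1,T}$ by the same Bramble--Hilbert + scaling reasoning (here without any Piola complication since scalar $L^2$-projection scales straightforwardly).

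The routine parts are Bramble--Hilbert and the reference-to-physical scaling. The main technical obstacle is verifying that the Raviart--Thomas interpolant $\hat\interpv$ is actually \emph{bounded} on $\bfH^{\ell+1}(\hat T)$ for every $0\le \ell\le k$: the RT degrees of freedom involve face integrals (normal traces) and interior moments, which are well defined for $\bfH^{\ell+1}(\hat T)$ only after invoking a trace inequality. Once this continuity is in hand, the rest of the proof is essentially mechanical. Since this continuity (together with the full statement) is already carried out in \cite[Lemmas 3.16, 3.17]{Gatica14}, I would simply cite that reference rather than reproduce the argument in full detail.
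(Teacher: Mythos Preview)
Your proposal is correct and matches the paper's approach: the paper does not give a proof but simply cites \cite[Lemmas 3.16, 3.17]{Gatica14}, exactly as you conclude. Your additional sketch of the Bramble--Hilbert plus Piola scaling argument is accurate and more informative than what the paper provides, but it is not required.
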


A direct consequence is that the global interpolation operator associated to $\Vk$ is stable in the following sense, 
\begin{equation} \label{eq:stabilityintp}
	\|\interpv\bfv\|_{\bfH^{\dive}(\Omdh)} \lesssim \|\bfv\|_{\bfH^1(\Omdh)}, \quad  \forall \bfv \in \bfH^1(\Omdh).
\end{equation}
\begin{proof}
	For a proof, see \cite[Lemma 4.4]{Gatica14}.
\end{proof}

\begin{lemma}(\textbf{The Poincaré inequality}, \cite[Theorem 7.91]{Salsa2016})\label{lem:Poincare}
	Let $ S\in\RR^d, d=2,3$, be a bounded Lipschitz domain with $\emptyset\neq B\subset \partial S$ and let $\bfv\in\bfH_{0,B}^1( S)$ so that $\bfv$ has zero trace on $B$. Then
	\begin{align*}
		\|\bfv\|_{ S} \leq C_P \|\nabla\bfv\|_{ S},\quad \text{ where } C_P=2\text{diam}( S).
	\end{align*} 
\end{lemma}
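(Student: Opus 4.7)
The plan is to reduce the claimed inequality to the classical $H^1_0$ Poincar\'e inequality via an extension-by-zero trick, with the factor of $2$ arising from a doubling of the domain. By density of smooth functions in $\bfH^1_{0,B}(S)$ it suffices to prove the estimate for $\bfv \in C^\infty(\overline S)$ with $\bfv|_B = 0$; and since the inequality is componentwise-linear in $\bfv$, I would reduce immediately to the scalar case $v$.

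The key geometric step is to construct an enlarged Lipschitz domain $\widetilde S \supset S$ with $\text{diam}(\widetilde S) \leq 2\,\text{diam}(S)$, obtained by reflection across a hyperplane supporting $\partial S$ at some chosen $y_0 \in B$. I would then extend $v$ by zero from $S$ to $\widetilde S$; because $v$ has zero trace on $B$, the extension lies in $H^1(\widetilde S)$, and by arranging $\widetilde S$ so that its boundary absorbs the original zero-trace portion $B$, the extension lies in $H^1_0(\widetilde S)$.

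To conclude, I would invoke the classical Friedrichs inequality for $H^1_0$-functions on a bounded domain: placing $\widetilde S$ inside a slab $\{a \leq x_1 \leq b\}$ of width $b - a = \text{diam}(\widetilde S)$, writing
\[
w(x_1, x') = \int_a^{x_1} \partial_1 w(t, x')\, dt,
\]
applying Cauchy--Schwarz in $t$, and integrating over $\widetilde S$ yields $\|w\|_{\widetilde S} \leq \text{diam}(\widetilde S)\, \|\nabla w\|_{\widetilde S}$. Applied to the zero extension of $v$, both sides reduce to integrals over $S$, giving $\|v\|_S \leq 2\,\text{diam}(S)\, \|\nabla v\|_S$.

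The main obstacle is the rigorous construction of the enlarged Lipschitz domain $\widetilde S$ and the zero-extension step for a general Lipschitz $S$ with arbitrary nonempty $B \subset \partial S$: one must verify that the doubled domain remains Lipschitz and that the zero-extended function is genuinely in $H^1_0(\widetilde S)$. For convex $S$, which is the standing assumption of the paper, a supporting hyperplane at any $y_0 \in B$ produces a reflected domain that is still convex (hence Lipschitz) and of diameter at most $2\,\text{diam}(S)$, so the argument is clean; the general Lipschitz case requires a more delicate construction, which is precisely what the cited theorem in Salsa carries out.
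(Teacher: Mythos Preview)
The paper does not supply a proof of this lemma; it is stated in Appendix~A with a citation to Salsa's textbook and no argument is given. There is therefore no paper proof to compare against.

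Assessing your sketch on its own merits, there is a genuine gap, and precisely in the convex case you label as clean. Take $S$ to be the unit disk and $H$ the tangent line at some $y_0\in B$: the reflection $S'$ of $S$ across $H$ meets $\overline S$ only at the single point $y_0$, and $\widetilde S=S\cup S'$ consists of two tangent disks. This set is neither convex nor Lipschitz (it has a cusp at $y_0$), so the assertion that the reflected domain ``is still convex (hence Lipschitz)'' is false. More fundamentally, even if you grant yourself some Lipschitz enlargement $\widetilde S\supset S$, the zero extension of $v$ cannot lie in $H^1_0(\widetilde S)$: the boundary $\partial\widetilde S$ must contain the portion $\partial S\setminus B$ on which $v$ is not assumed to vanish, so the extended function has no zero trace there. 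The hypothesis $v|_B=0$ only allows extension by zero \emph{across} $B$, which yields membership in $H^1(\widetilde S)$ but not $H^1_0(\widetilde S)$, and the Friedrichs slab argument then does not apply. Standard routes to Poincar\'e with partial zero boundary data go either through Rellich compactness (which produces a constant but not the explicit $2\,\mathrm{diam}(S)$) or through a direct one-dimensional integration adapted to the geometry; neither proceeds by reducing to the full $H^1_0$ inequality via zero extension.
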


\begin{lemma}(\textbf{Stable lifting}, \cite[Lemma 4.7]{DanSco12})\label{lem:divergence_Poincare}
  Let $ S\in\RR^d, d=2,3,$ be a bounded convex domain or a smooth domain. 
  Then for any $q\in L^2( S)$ there exists $\bfv \in \bfH_{0,B}^1( S)$ with $\emptyset\neq B\subset\partial S$, satisfying 
	\[\dive\bfv = q,\ \|\bfv\|_{\bfH^1( S)}\leq C\|q\|_{ S} .\] 
	where $C$ is a positive constant independent of $q$. If $q$ satisfies $\int_D q=0$ then $\bfv \in \bfH_{0}^1(D)$.
\end{lemma}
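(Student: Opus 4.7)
The plan is to reduce to the classical Bogovski\u{\i} (stable right inverse of the divergence) theorem by first handling the mean-zero case and then accounting for a possibly nonzero mean through a simple lifting from the free part of the boundary.

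First I would treat the mean-zero case. Assume $\int_S q = 0$. The classical Bogovski\u{\i} construction (available for bounded Lipschitz, in particular convex or smooth, domains) yields an explicit integral operator $\mathcal{B}$ such that $\tilde{\bfv} := \mathcal{B} q \in \bfH^1_0(S)$ satisfies $\dive \tilde{\bfv} = q$ and $\|\tilde{\bfv}\|_{\bfH^1(S)} \lesssim \|q\|_S$, with the constant depending only on the Lipschitz character and diameter of $S$. Equivalently, one may invoke the Stokes problem with data $q$ to produce the same $\tilde{\bfv}$ by taking the velocity component of the solution. In either presentation, this already yields the strong conclusion $\bfv = \tilde{\bfv} \in \bfH^1_0(S) \subset \bfH^1_{0,B}(S)$, which also proves the second assertion about the case $\int_S q = 0$.

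Next I would handle the general case where $\int_S q$ may be nonzero. Since $B \subsetneq \partial S$, the complement $\Gamma := \partial S \setminus B$ is a nonempty relatively open subset. Pick any fixed smooth vector field $\bfw_0 \in \bfC^\infty(\overline{S}, \RR^d)$ whose trace vanishes on a neighborhood of $B$ but satisfies $\int_{\partial S} \bfw_0 \cdot \bfn = \int_{\Gamma} \bfw_0 \cdot \bfn = 1$; such a $\bfw_0$ exists by choosing a smooth cutoff concentrated near an interior point of $\Gamma$ multiplied by any smooth extension of the outward normal. Set $\alpha := \int_S q$ and define $\bfw := \alpha \bfw_0$. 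Then $\bfw \in \bfH^1_{0,B}(S)$, $\int_S \dive \bfw = \alpha = \int_S q$, and $\|\bfw\|_{\bfH^1(S)} \lesssim |\alpha| \lesssim \|q\|_S$ by Cauchy--Schwarz. The residual $\tilde{q} := q - \dive \bfw \in L^2(S)$ has zero mean, so by the first step there exists $\tilde{\bfv} \in \bfH^1_0(S)$ with $\dive \tilde{\bfv} = \tilde{q}$ and $\|\tilde{\bfv}\|_{\bfH^1(S)} \lesssim \|\tilde{q}\|_S \lesssim \|q\|_S$. The desired function is $\bfv := \tilde{\bfv} + \bfw \in \bfH^1_{0,B}(S)$, which satisfies $\dive \bfv = q$ and $\|\bfv\|_{\bfH^1(S)} \lesssim \|q\|_S$ by the triangle inequality.

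The main obstacle is the mean-zero step: constructing a bounded right inverse of $\dive\colon \bfH^1_0(S) \to L^2_0(S)$ is not elementary, and the usual proofs go either through the Bogovski\u{\i} integral operator (requiring star-shaped or finite star-shaped-union decompositions together with Calder\'on--Zygmund estimates for singular integrals) or through the existence and regularity of the Stokes system (requiring an inf-sup argument together with the De Rham/Ne\v{c}as lemma to invert the gradient on $L^2_0$). Once this deep ingredient is in hand, the remaining steps above are routine, and the convexity or smoothness hypothesis is used precisely to ensure the Lipschitz regularity needed for the Bogovski\u{\i} construction to apply globally without further decomposition.
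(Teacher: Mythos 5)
Your argument is correct, but there is nothing internal to compare it against: the paper does not prove this lemma at all --- it is imported verbatim, citation and all, from \cite[Lemma 4.7]{DanSco12}, and is used only once (in the proof of Lemma~\ref{lem:B_infsup}, where the boundary set $B$ plays no role). Measured against the standard proof of such lifting results, your decomposition is exactly the canonical one: a Bogovski\u{\i} (or Stokes/Ne\v{c}as) right inverse of the divergence on the mean-zero part, plus a fixed smooth flux-carrying field supported away from $B$ to absorb $\alpha=\int_S q$. All the estimates check out: $\|\alpha\bfw_0\|_{\bfH^1(S)}\le |S|^{1/2}\|\bfw_0\|_{\bfH^1(S)}\|q\|_S$, the residual $\tilde q$ is mean-free with $\|\tilde q\|_S\lesssim \|q\|_S$, and the sum lies in $\bfH^1_{0,B}(S)$. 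Your reading of the (loosely quantified) $B$ as a fixed nonempty proper subset is also the right one; indeed $B=\partial S$ would force $\int_S q=0$ by the divergence theorem, which is precisely what the lemma's last sentence (where $D$ is a typo for $S$) records, and your mean-zero step proves that case.

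Two remarks. First, the one imprecise spot is the construction of $\bfw_0$ via ``any smooth extension of the outward normal'': for a merely convex domain (e.g.\ a polytope) the normal is discontinuous and has no smooth extension. The fix is one line: choose a boundary patch where $\partial S$ is a Lipschitz graph with graph direction $e$ and set $\bfw_0=\psi\, e$ for a smooth bump $\psi$ supported near an interior point of that patch away from $B$; then $e\cdot\bfn\ge (1+L^2)^{-1/2}$ a.e.\ on the patch, so the flux is positive and can be normalized. Second, your closing observation cuts slightly the wrong way: the Bogovski\u{\i} operator is available on any bounded Lipschitz domain, so your route in fact proves the lemma under \emph{weaker} hypotheses than stated. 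The ``convex or smooth'' assumption in the quoted formulation is what one needs for the alternative proof via an auxiliary Poisson problem and $H^2$ elliptic regularity (the likely route in the cited source), which your argument bypasses entirely --- so the hypothesis is inherited from the citation rather than used by your proof.
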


\section{Alternative stabilization techniques} \label{sec:alt-stab-tech}
\subsection{Patch stabilization \cite{preuss2018}} \label{sec:patch-stab}
For the bulk variables the face-based stabilization is equivalent to a polynomial extension based stabilization. Any polynomial has a canonical extension; consider for instance $\bfv_h\in \RT_0(\Omdh)$ and its restriction $\bfv_h^T=\bfv_h|_T\in\RT_0(T)$ for $T\in\mesh$, then 
\begin{align}
	E\bfv_h^T(x) = \bfv_h^T(x),\quad \forall x\in \Omdh\setminus T,
\end{align}
can be defined simply by evaluating the polynomial $\bfv_h^T$ at the point $x$. 
For the first two terms \eqref{eq:full stab-u}-\eqref{eq:full stab-b} we define the extension based stabilization as
\begin{align}
	\tilde{s}(\bfu_h,\bfv_h) &=  \sum_{F \in \mcFs}
	\tau  ([\![\bfu_{h} ]\!], [\![\bfv_{h}]\!])_{P(F)}, \label{eq:patch_1} \\
	\tilde{s}_b(\bfu_h,q_h) &=  \sum_{F \in \mcFs}
	\tau_b  ( [\![\dive \bfu_{h}]\!], [\![q_{h}]\!] )_{P(F)},\label{eq:patch_2}
\end{align} 
where $P(F)=T_1\cup T_2$ is the union (or patch) of the two elements sharing the face $F$ and 
\begin{align}
	[\![q_h]\!]|_{T_i} = q_h^{T_i}-Eq_h^{T_j},\quad\quad i,j=1,2, i\neq j.
\end{align}
The extension based stabilization operators are beneficial to use numerically since the number of terms are the same regardless of the order of the elements and no higher order derivatives have to be implemented. In our numerical experiments we use these extension based stabilization terms.

\subsection{Macroelement partitioning}\label{sec:macro-elements}
Each element in the mesh $\mcT_{h}$ can be classified either as having a large intersection with the domain $\Omega$, or a small intersection. We say that an element $T \in \mcT_{h}$ has a large $\Omega$-intersection if
\begin{equation}\label{eq:largeel}
	\delta  \leq \frac{|T \cap \Omega|}{|T|},
\end{equation}
where $\delta$ is a positive constant which is independent of the element and the mesh parameter. We collect all elements with a large intersection in 
\begin{equation}
	\mcTL=\left\{T \in \mcT_{h} : |T \cap \Omega| \geq \delta |T| \right\}.
\end{equation}
Using such a classification we create a macroelement partition $\mcM_{h}$ of $\Omega_{\mcT_{h}}$
\begin{itemize}
	\item To each $T_L\in\mcTL$ we associate a macroelement mesh $\mcT_{h}(T_L)$ containing $T_L$ and possibly adjacent elements that are in $\mcT_{h}\setminus \mcTL$, i.e., elements classified as having a small intersection with $\Omega$ and  are connected to $T_L$ via a bounded number of internal faces. 
	\item Each element $T \in \mcT_{h}$ belongs to precisely one macroelement mesh $\mcT_{h}(T_L)$.
	\item Each macroelement $M_L \in \mcM_{h}$ is the union of elements in $\mcT_{h}(T_L)$, i.e., 
	\begin{equation}
		M_L = \bigcup_{T \in \mcT_{h}(T_L)} T.
	\end{equation}
\end{itemize}
We denote by $\mcFM$ the set consisting of interior faces of $M_L\in\mcM_{h}$. Note that $\mcFM$ is empty when $T_L$ is the only element in $\mcTT$.

We follow Algorithm 1 in \cite{LarZah23} when we construct the macroelement partition, but here follows a short heuristic explanation. 

The algorithm to construct a macroelement partition $\mcM_h$ starts by marking all cut elements which are small and saving these in a list of small elements each to be connected to some large element $T_L \in \mcTL$. Then the algorithm takes one small element and checks its face neighbors until a neighbor with a large $\Omega$-intersection is found \textit{and stored}, after which it removes the small element from the list. Then while there are still small elements in the list, it repeats this procedure. If the algorithm doesn't find a face neighbor with a large $\Omega$-intersection to some small element $K$, it skips the element for the moment and returns after other small element has had a chance to find direct face neighbors with large $\Omega$-intersection. Then previously marked small elements will no longer be in the list, and so $K$ will eventually find big neighbors and be connected via faces of these to the same large element $T_L$.

\begin{remark}[Macroelement stabilization] \label{rmk:macrostab}
	As introduced in \cite{LarZah23},  one can utilize this macroelement partition of the mesh and stabilize on fewer elements. Stabilization is then applied only on internal faces of macroelements and never on faces shared by neighboring macroelements. The stabilization corresponding to \eqref{eq:full stab-u} and \eqref{eq:full stab-b} becomes
	\begin{align}
		s_{\pmb{u}}(\pmb{u}_h,\pmb{v}_h) &=  \sum_{F \in \mcFM} \sum_{j=0}^{\kp+1} \tau_{u} h^{2j + 1} (\jump{D^j_{\bfn_F} \pmb{u}_h }, \jump{D^j_{\bfn_F} \pmb{v}_h})_{F}, \\
		s_{b}(\pmb{u}_h,q_h) &=   \sum_{F \in \mcFM}  \sum_{j=0}^{\kp} \tau_{b} h^{2j + \gamma} ( \jump{D^j (\dive \pmb{u}_h)}, \jump{D^j q_h} )_{F}, \label{eq:stab-b} \\
		s_{p}(p_h,q_h) &=   \sum_{F \in \mcFM} \sum_{j=0}^{\kp} \tau_{b} h^{2j + \gamma} (\jump{D^j p_h}, \jump{D^j q_h})_{F}.
	\end{align}
	Notably, no proof in any prior section is affected since a version of Lemma \ref{lem:sp_ineq} can be shown to hold for this present relaxed stabilization.
\end{remark}

\begin{remark}[Local mass preservation]
	We will always have a local mass preservation in each macroelement $M\in\mcM_h$, which can be seen from taking $q_h = 1$ on $M$ and $0$ everywhere else in $\mesh$. Then, as for standard discontinuous elements (c.f. \cite{JoLiMeNeRe17}), we get from \eqref{eq:discretedarcy2} that  
	\begin{equation}
		\int_M \dive\bfu_h = \int_M g_E.
	\end{equation}
\end{remark}

\end{appendix}
 
\section*{Acknowledgements}
 This research was supported by the Swedish Research Council Grant No. 2022-04808 and the Wallenberg Academy Fellowship KAW 2019.0190.

\addcontentsline{toc}{section}{Bibliography}

\bibliographystyle{abbrv}
\bibliography{references}

\begin{thebibliography}{10}

\bibitem{BofBreFor13}
D.~Boffi, F.~Brezzi, and M.~Fortin.
\newblock {\em Mixed finite element methods and applications}.
\newblock Springer series in computational mathematics, 44. Springer, New York,
  2013.

\bibitem{BreSco}
S.~C. Brenner and L.~R. Scott.
\newblock {\em The Mathematical Theory of Finite Element Methods}.
\newblock Springer-Verlag, 2008.

\bibitem{Bu10}
E.~Burman.
\newblock Ghost penalty.
\newblock {\em C. R. Acad. Sci. Paris, Ser. I}, 348(21-22):1217 -- 1220, 2010.

\bibitem{BuHa12}
E.~Burman and P.~Hansbo.
\newblock Fictitious domain finite element methods using cut elements: {II}.
  {A} stabilized {N}itsche method.
\newblock {\em Applied Numerical Mathematics}, 62(4):328--341, 2012.

\bibitem{BurHanLarst24}
E.~Burman, P.~Hansbo, and M.~Larson.
\newblock Cut finite element method for divergence-free approximation of
  incompressible flow: A {L}agrange multiplier approach.
\newblock {\em SIAM Journal on Numerical Analysis}, 62(2):893--918, 2024.

\bibitem{BurHanLarMas2018MFD}
E.~Burman, P.~Hansbo, M.~G. Larson, and A.~Massing.
\newblock Cut finite element methods for partial differential equations on
  embedded manifolds of arbitrary codimensions.
\newblock {\em ESAIM: Mathematical Modelling and Numerical Analysis},
  52(6):2247--2282, 2018.

\bibitem{DanSco12}
C.~D'Angelo and A.~Scotti.
\newblock A mixed finite element method for darcy flow in fractured porous
  media with non-matching grids.
\newblock {\em ESAIM: Mathematical Modelling and Numerical Analysis},
  46(2):465--489, 2012.

\bibitem{Ern2006Evaluation}
A.~Ern and J.-L. Guermond.
\newblock Evaluation of the condition number in linear systems arising in
  finite element approximations.
\newblock {\em ESAIM: Mathematical Modelling and Numerical Analysis},
  40(1):29--48, 2006.

\bibitem{evans2022pde}
L.~C. Evans.
\newblock {\em Partial differential equations}, volume~19.
\newblock American Mathematical Society, 2022.

\bibitem{repo}
T.~Frachon.
\newblock Cut{FEM}-{L}ibrary.
\newblock {https://github.com/CutFEM/CutFEM-Library}, 2024.

\bibitem{FraHaNilZa22}
T.~Frachon, P.~Hansbo, E.~Nilsson, and S.~Zahedi.
\newblock A divergence preserving cut finite element method for {Darcy} flow.
\newblock {\em SIAM J. Sci. Comput.}, 46(3):A1793--A1820, 2024.

\bibitem{FraNilZa23}
T.~Frachon, S.~Zahedi, and E.~Nilsson.
\newblock Divergence-free cut finite element methods for {Stokes} flow.
\newblock {\em arXiv preprint arXiv:2304.14230}, 2023.

\bibitem{Gatica14}
G.~N. Gatica.
\newblock A simple introduction to the mixed finite element method.
\newblock {\em Theory and Applications. Springer Briefs in Mathematics.
  Springer, London}, 2014.

\bibitem{GrandeLehr2018Surf}
J.~Grande, C.~Lehrenfeld, and A.~Reusken.
\newblock Analysis of a high-order trace finite element method for {PDE}s on
  level set surfaces.
\newblock {\em SIAM Journal on Numerical Analysis}, 56(1):228--255, 2018.

\bibitem{LiuNeiOls23}
{H. Liu}, {M. Neilan}, and {M. Olshanskii}.
\newblock A {CutFEM} divergence-free discretization for the {Stokes} problem.
\newblock {\em ESAIM: Mathematical Modelling and Numerical Analysis},
  57(1):143--165, 2023.

\bibitem{HanHanLar2003TraceIneq}
A.~Hansbo, P.~Hansbo, and M.~G. Larson.
\newblock A finite element method on composite grids based on {Nitsche's}
  method.
\newblock {\em ESAIM: Mathematical Modelling and Numerical Analysis},
  37(3):495--514, 2003.

\bibitem{HaLaZa14}
P.~Hansbo, M.~G. Larson, and S.~Zahedi.
\newblock A cut finite element method for a {S}tokes interface problem.
\newblock {\em Applied Numerical Mathematics}, 85:90--114, 2014.

\bibitem{Hiptmair2012Universal}
R.~Hiptmair, J.~Li, and J.~Zou.
\newblock Universal extension for {Sobolev} spaces of differential forms and
  applications.
\newblock {\em Journal of Functional Analysis}, 263(2):364--382, 2012.

\bibitem{JoLiMeNeRe17}
V.~John, A.~Linke, C.~Merdon, M.~Neilan, and L.~G. Rebholz.
\newblock On the {divergence} {constraint} in {mixed} {finite} {element}
  {methods} for {incompressible} {flows}.
\newblock {\em SIAM Review}, 59(3):492--544, Jan. 2017.

\bibitem{LaZa2020SurfStab}
M.~G. Larson and S.~Zahedi.
\newblock Stabilization of high order cut finite element methods on surfaces.
\newblock {\em IMA Journal of Numerical Analysis}, 40(3):1702--1745, 2020.

\bibitem{LarZah23}
M.~G. Larson and S.~Zahedi.
\newblock {Conservative cut finite element methods using macroelements}.
\newblock {\em Computer Methods in Applied Mechanics and Engineering},
  414:116141, 2023.

\bibitem{Lehrenfeld2023Divfree}
C.~Lehrenfeld, T.~van Beeck, and I.~Voulis.
\newblock Analysis of divergence-preserving unfitted finite element methods for
  the mixed {Poisson} problem.
\newblock {\em arXiv preprint arXiv:2306.12722}, 2023.

\bibitem{linke_pressure-robustness_2016}
A.~Linke and C.~Merdon.
\newblock Pressure-robustness and discrete {Helmholtz} projectors in mixed
  finite element methods for the incompressible {Navier}--{Stokes} equations.
\newblock {\em Computer Methods in Applied Mechanics and Engineering},
  311:304--326, Nov. 2016.

\bibitem{MaLaLoRo14}
A.~Massing, M.~Larson, A.~Logg, and M.~Rognes.
\newblock A stabilized {Nitsche} fictitious domain method for the {Stokes}
  problem.
\newblock {\em Journal of Scientific Computing}, 61(3):604--628, 2014.

\bibitem{preuss2018}
J.~Preu{\ss}.
\newblock Higher order unfitted isoparametric space-time {FEM} on moving
  domains.
\newblock {\em Master's thesis, University of Gottingen}, 2018.

\bibitem{Reu15}
A.~Reusken.
\newblock {Analysis of trace finite element methods for surface partial
  differential equations}.
\newblock {\em IMA Journal of Numerical Analysis}, 35(4):1568--1590, 2015.

\bibitem{Salsa2016}
S.~Salsa.
\newblock {\em Partial differential equations in action: from modelling to
  theory}, volume~99.
\newblock Springer, 2016.

\end{thebibliography}

\end{document}